\newcommand\shorttitle{The boundary of the orbital beta process}
\newcommand\authors{Theodoros Assiotis and Joseph Najnudel}
\ifodd\value{page}
\authors
\shorttitle
\newtheorem{thm}{Theorem}[section]
\newtheorem{lem}[thm]{Lemma}
\newtheorem{defn}[thm]{Definition}
\newtheorem{rmk}[thm]{Remark}
\newtheorem{prop}[thm]{Proposition}
\newtheorem{ex}[thm]{Example}
\title{\large \bf THE BOUNDARY OF THE ORBITAL BETA PROCESS}
\author{\small THEODOROS ASSIOTIS AND JOSEPH NAJNUDEL}
\date{}
\begin{document}

\maketitle

\begin{abstract}
The unitarily invariant probability measures on infinite Hermitian matrices have been classified by Pickrell \cite{Pickrell} and by Olshanski and Vershik \cite{OlshanskiVershik}. This classification is equivalent to determining the boundary of a certain inhomogeneous Markov chain with given transition probabilities. This formulation of the problem makes sense for general $\beta$-ensembles when one takes as the transition probabilities the Dixon-Anderson conditional probability distribution \cite{Dixon},\cite{Anderson}. In this paper we determine the boundary of this Markov chain for any $\beta \in (0,\infty]$, also giving in this way a new proof of the classical $\beta=2$ case of \cite{Pickrell},\cite{OlshanskiVershik}. Finally, as a by-product of our results we obtain alternative proofs of the almost sure convergence of the rescaled Hua-Pickrell and Laguerre $\beta$-ensembles to the general $\beta$ Hua-Pickrell \cite{KillipStoiciu}, \cite{ValkoVirag} and $\beta$ Bessel \cite{RiderRamirez} point processes respectively.
\end{abstract}

\tableofcontents

\section{Introduction}
The probability measures on infinite Hermitian matrices which are invariant by unitary conjugation have been completely classified by Pickrell in 
  \cite{Pickrell} and by Olshanski and Vershik in \cite{OlshanskiVershik}. These measures can be decomposed as convex combinations of extremal measures, called {\it ergodic measures}, which are indexed by a set of parameters 
$(\{\alpha^+\}, \{\alpha^-\}, \gamma_1, \gamma_2 ) \in \mathbb{R}_+^{\infty} \times  \mathbb{R}_+^{\infty} 
\times \mathbb{R} \times \mathbb{R}_+ $. 
Moreover, in  \cite{BorodinOlshanski}, Borodin and Olshanski have proven that  the points $(\alpha^+_j)_{j \geq 1}$
and $(-\alpha^-_j)_{j \geq 1}$  correspond
to almost sure limits of the extremal eigenvalues of the top-left submatrices of the corresponding infinite matrix, divided by their dimension. 
Besides this result of convergence of renormalized eigenvalues, a result of strong convergence of components of eigenvectors has recently been proven by Najnudel in \cite{Najnudel}, and previously by Maples, Najnudel and Nikeghbali in \cite{MaplesNajnudelNikeghbali} 
in the significant particular case of the Hua-Pickrell measure of parameter $0$, for which the image of the top-left submatrices by the Cayley transform are distributed like  the Circular Unitary Ensemble. 
The Cayley transform is the map from the Hermitian to the unitary matrices, given by $M \mapsto (M+i) (M-i)^{-1}$. This transform maps top-left blocks of infinite Hermitian matrices to 
particular sequences of unitary matrices called {\it virtual isometries}, defined by Neretin in   \cite{NeretinUnitary} and extended by Bourgade, Najnudel and Nikeghbali in 
 \cite{BourgadeNajnudelNikeghbali}. This extension includes some particular sequences of permutation matrices, corresponding to the so-called {\it virtual permutations}, for which a classification of the conjugation-invariant measures has been studied by Kerov, Olshanski and Vershik in \cite{KerovOlshanskiVershik} and by Tsilevich in \cite{Tsilevich}. 

The main goal of this paper is to generalize the classification of Pickrell, Olshanski and Vershik to the setting of $\beta$-ensembles for general parameter $\beta \in (0, \infty]$. The case $\beta=2$ corresponds to the infinite Hermitian matrices already considered above. The case $\beta = 1$ corresponds to infinite orthogonal matrices, and  $\beta =4$ corresponds to infinite self-adjoint matrices with quaternion entries. The case of other values of $\beta$ does not correspond to classical ensembles of matrices. 

However, the generalization can be naturally constructed if we only consider the spectra of the top-left submatrices. Then, the classification of Pickrell and Olshanski and Vershik becomes equivalent to the problem of determining the boundary of a certain inhomogeneous Markov chain (which was called 'the graph of spectra' by Kerov, see Section 8 in \cite{BorodinOlshanski} and also \cite{HuaPickrell} for more on this point of view), that we will make precise in the rest of the introduction. Such problems, see for example \cite{BorodinOlshanskiBoundary}, \cite{OkounkovOlshanskiJack}, \cite{CuencaJack}, \cite{CuencaMacdonald} (with the important difference to our setting that the state spaces are discrete) have significant applications in representation theory, beginning with the work of Vershik and Kerov on the infinite-dimensional unitary \cite{VershikKerov} and symmetric groups \cite{VershikKerovSymmetric}, and in more recent years in interacting particle systems and random surface growth, see \cite{BorodinKuanPlancherel}, \cite{BorodinKuanOrthogonal}, \cite{BorodinFerrari}. Coming back to the setting of this work, our results have some interesting consequences in the study of $\beta$-ensembles, see \cite{KillipStoiciu}, \cite{ValkoVirag}, \cite{RiderRamirez}, in that they provide an alternative route (without making use of tridiagonal or CMV matrix models) to proving the almost sure convergence of the rescaled Hua-Pickrell and Laguerre general $\beta$-ensembles. 

In order to state the main results of this paper precisely, we need to introduce some definitions.  First, a remark about the notation: throughout this paper we will use the parameter $\theta=\beta/2$. This is because we will make substantial use of symmetric functions in our argument, more precisely the multivariate Bessel functions, and the choice of the parameter $\theta=\beta/2$ is standard in the corresponding literature.

For $N\ge 1$, we consider the Weyl chambers $W^N$ given by: 
\begin{align}
W^N=\{a=(a_1,\dots,a_N) \in \mathbb{R}^N:a_1\ge a_2\ge\cdots \ge a_N \}.
\end{align}
For $a \in W^{N}$ and $b \in W^{N+1}$ we say that $a$ and $b$ interlace and write $a\prec b$ if:
\begin{align*}
b_1\ge a_1 \ge b_2 \ge a_2 \ge \cdots \ge a_N \ge b_{N+1}.
\end{align*}

We will now consider certain Markov kernels denoted by $\Lambda_{N+1,N}^{\theta}$ from $W^{N+1}$ to $W^N$:

\begin{defn}
Let $\theta \in (0,\infty]$, $N \in \mathbb{N}$.  We define the Dixon-Anderson conditional probability distribution, given by the  Markov kernel $\Lambda_{N+1,N}^{\theta}$ from $W^{N+1}$ to $W^N$, as follows: for fixed $b \in W^{N+1}$, $\Lambda^{\theta}_{N+1,N}(b,\cdot)$ is the distribution of the nonincreasing sequence $a = (a_1, \dots ,a_N)$ of the roots, counted with multiplicity,  of the following 
polynomial of degree $N$: 
$$z \mapsto  \sum_{j=1}^{N+1}  \alpha_j \prod_{1 \leq k \leq N+1, k \neq j}  (z - b_k) $$
where $(\alpha_1, \dots, \alpha_{N+1})$ is Dirichlet distributed, all parameters being equal to $\theta$, if $\theta < \infty$.  For $\theta = \infty$, we take $\alpha_1 = \alpha_2 = \dots = \alpha_{N+1} = 1/(N+1)$, which implies that  $a$ is deterministic and the monic  polynomial with roots $a_1, \dots, a_N$ is $1/(N+1)$ times the derivative of the monic polynomial with roots $b_1, \dots, b_{N+1}$. 
\end{defn}
This definition is not the same as the one used by Assiotis in \cite{Assiotis}: however, it has the advantage to be available even when some of the elements $b_1, \dots, b_{N+1}$ coincide. Moreover, 
it is clear, since the roots of a polynomial are continuous with respect to its coefficients, that the distribution $\Lambda^{\theta}_{N+1,N}(b,\cdot)$ is continuous with respect to $b$. 
If all the $b_j$'s are distinct, we directly recover the definition of  \cite{Assiotis} by applying Proposition 4.2.1 in Forrester \cite{Forrester}: 
\begin{prop}
If $b_1 > \dots > b_{N+1}$ and $\theta < \infty$, the Dixon-Anderson conditional probability distribution defined above is given by: 
\begin{align}\label{DixonAndersonExplicit}
\Lambda^{\theta}_{N+1,N}(b,da)=\frac{\Gamma (\theta (N+1))}{\Gamma(\theta)^{N+1}}\prod_{1\le i <j \le N+1}^{}(b_i-b_j)^{1-2\theta}\prod_{1\le i <j \le N}^{}(a_i-a_j)\prod_{i=1}^{N}\prod_{j=1}^{N+1}|a_i-b_j|^{\theta-1}\mathbf{1}_{\left(a\prec b\right)}\prod_{i=1}^{N}da_i.
\end{align}
\end{prop}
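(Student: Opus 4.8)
The plan is to present \eqref{DixonAndersonExplicit} as the image of the Dirichlet density under the change of variables $(\alpha_1,\dots,\alpha_{N+1})\mapsto(a_1,\dots,a_N)$ implicit in the definition. The key algebraic observation is that, since each $z\mapsto\prod_{k\neq j}(z-b_k)$ is monic of degree $N$ and $\sum_{j}\alpha_j=1$, the polynomial $P(z)=\sum_{j=1}^{N+1}\alpha_j\prod_{k\neq j}(z-b_k)$ is itself monic of degree $N$, so $P(z)=\prod_{m=1}^{N}(z-a_m)$. Evaluating at $z=b_i$ annihilates all terms but the $i$-th, which gives the inversion formula
\begin{align*}
\alpha_i=\frac{P(b_i)}{\prod_{k\neq i}(b_i-b_k)}=\frac{\prod_{m=1}^{N}(b_i-a_m)}{\prod_{k\neq i}(b_i-b_k)},\qquad 1\le i\le N+1,
\end{align*}
with $\sum_i\alpha_i=1$ being the Lagrange-interpolation identity that the top divided difference of the monic degree-$N$ polynomial $P$ at $N+1$ nodes equals its leading coefficient, so that $\alpha_{N+1}$ is redundant. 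Tracking the signs of numerator and denominator when $b_1>\cdots>b_{N+1}$ shows $\alpha_i\ge 0$ for all $i$ precisely when $a\prec b$ (in particular the roots of $P$ are automatically real and interlace the $b_k$); hence on the full-measure open set where the $b_k$ and $a_m$ are distinct and strictly interlace and all $\alpha_i>0$, the inversion formula and its inverse are mutually inverse diffeomorphisms between the open simplex $\{\alpha_i>0,\ \sum_i\alpha_i=1\}$ and the open polytope $\{a:a\prec b\}$.

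Next I would compute the Jacobian of $a\mapsto(\alpha_1,\dots,\alpha_N)$. Differentiating the inversion formula gives
\begin{align*}
\frac{\partial\alpha_i}{\partial a_\ell}=\frac{-\prod_{m\neq\ell}(b_i-a_m)}{\prod_{k\neq i}(b_i-b_k)}=\frac{-P(b_i)}{\prod_{k\neq i}(b_i-b_k)}\cdot\frac{1}{b_i-a_\ell},
\end{align*}
so pulling the $i$-dependent factor out of the $i$-th row, the Jacobian equals $\prod_{i\le N}\frac{-P(b_i)}{\prod_{k\neq i}(b_i-b_k)}$ times the Cauchy determinant $\det\big(1/(b_i-a_\ell)\big)_{1\le i,\ell\le N}=\prod_{1\le i<j\le N}(b_i-b_j)(a_j-a_i)/\prod_{i,\ell\le N}(b_i-a_\ell)$. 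The product $\prod_{i\le N}P(b_i)=\prod_{i\le N}\prod_{m\le N}(b_i-a_m)$ cancels the Cauchy denominator, and after splitting $\prod_{i\le N}\prod_{k\neq i}(b_i-b_k)$ into its $k\le N$ part, which is $\pm\prod_{1\le i<j\le N}(b_i-b_j)^2$, and the remaining $\prod_{i\le N}(b_i-b_{N+1})$, the signs collapse and one is left with
\begin{align*}
\left|\det\Big(\tfrac{\partial\alpha_i}{\partial a_\ell}\Big)_{1\le i,\ell\le N}\right|=\frac{\prod_{1\le i<j\le N}(a_i-a_j)}{\prod_{1\le i<j\le N+1}(b_i-b_j)}.
\end{align*}

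It then remains to assemble: the density of $a$ is $\frac{\Gamma(\theta(N+1))}{\Gamma(\theta)^{N+1}}\prod_{i=1}^{N+1}\alpha_i^{\theta-1}$, evaluated through the inversion formula, times the Jacobian above. Since $\prod_i|\alpha_i|^{\theta-1}=\prod_{i=1}^{N+1}\prod_{m=1}^{N}|b_i-a_m|^{\theta-1}/\prod_{1\le i<j\le N+1}(b_i-b_j)^{2(\theta-1)}$, the powers of $\prod_{i<j}(b_i-b_j)$ combine as $-2(\theta-1)+(-1)=1-2\theta$, the factor $\prod_{i=1}^{N}\prod_{j=1}^{N+1}|a_i-b_j|^{\theta-1}$ and the extra $\prod_{1\le i<j\le N}(a_i-a_j)$ survive, and the Dirichlet normalisation $\Gamma(\theta(N+1))/\Gamma(\theta)^{N+1}$ passes through unchanged, which reproduces \eqref{DixonAndersonExplicit} exactly; the fact that no stray constant appears is a useful check on the Jacobian. (Equivalently, as the statement suggests, once the inversion formula is in hand one may simply invoke Forrester's Proposition 4.2.1, which records this very change of variables.) The step I expect to be the main obstacle is the Jacobian computation: recognising the Cauchy determinant, carrying out the cancellation with $\prod_i P(b_i)$, and above all keeping all the alternating signs straight together with the separate treatment of the node $b_{N+1}$, while making sure the whole argument is run on the open dense set where the change of variables is genuinely a diffeomorphism.
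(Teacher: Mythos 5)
Your proof is correct and takes the same route the paper does: the paper disposes of the proposition in one line by citing Proposition 4.2.1 in Forrester, which records precisely the Dirichlet-to-interlacing change of variables that you carry out in full (inversion formula $\alpha_i = \prod_m (b_i - a_m)/\prod_{k\neq i}(b_i-b_k)$, Cauchy-determinant Jacobian, reassembly of the weights). Your write-up is a faithful and complete unpacking of that cited result, including the sign bookkeeping that the citation elides.
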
 

This distribution had originally been introduced by Dixon at the beginning of the last century in \cite{Dixon} 
and independently rediscovered by Anderson in his study of the Selberg integral in \cite{Anderson}. 

In the sequel we will make essential use of both equivalent forms of the definition of $\Lambda_{N+1,N}^{\theta}$. More precisely, the first form of the definition is used in the analysis performed in Sections 2 and 5 while the second one is mostly used in Section 6. As far as we know, this seems to be a novel aspect of our work as previous papers \cite{GorinMarcus}, \cite{CuencaOrbital}, \cite{Assiotis} appear to use only the second form in display (\ref{DixonAndersonExplicit}) above.

We now define the notion of {\it coherent} or {\it consistent} interlacing arrays, which corresponds to 
random families of arrays following an inhomogeneous Markov chain whose transitions are given by 
 Dixon-Anderson conditional probability distributions: 
\begin{defn}
Let $N\ge 1$ and  $\theta \in (0,\infty]$. 
A coherent, or consistent, random family of interlacing arrays of parameter $\theta$ and length $N$ is a family of random sequences  $\{ a^{(i)}\}_{i=1}^N$, such that $ a^{(i)} \in W^i$, 
\begin{align}
a^{(1)}\prec a^{(2)}\prec \cdots \prec a^{(N-1)} \prec a^{(N)},
\end{align}
and the joint distribution $\mathsf{M}$ of the family satisfies 
\begin{align}
\mathsf{M}\left(da^{(1)},\dots,da^{(N)}\right)= \mu_N (da^{(N)}) \Lambda_{N,N-1}^{\theta}\left(a^{(N)},da^{(N-1)}\right) \cdots \Lambda_{2,1}^{\theta}\left(a^{(2)},da^{(1)}\right),
\end{align}
where $\mu_N$ is the distribution of the top row $a^{(N)}$ of the family. 
\end{defn} 
A useful  particular case of this definition corresponds to the case where the top row is deterministic: 
\begin{defn}
Let $N\ge 1$, $\theta \in (0,\infty]$ and let $\mathsf{a}(N) \in W^N$ be deterministic. An orbital family of interlacing arrays, or orbital beta process, of parameter 
 $\theta$, length $N$ and top row $\mathsf{a}(N)$ is a coherent family of arrays with the same parameter and the same length, such that the top row is almost surely equal to $\mathsf{a}(N)$. 
The law of such a family will be called the orbital distribution of top row $\mathsf{a}(N)$ and parameter $\theta$. 
\end{defn} 
\begin{rmk}
When the coordinates of the top row are all distinct and $\theta$  is finite, 
an  orbital family of interlacing arrays of parameter $\theta$ corresponds to a  $\beta$-corner process as defined in the paper by Gorin and Marcus \cite{GorinMarcus}, for $\beta = 2 \theta$. 
When the parameter $\theta$ is equal to infinity,  an  orbital family of interlacing arrays  of 
length $N$ and top row $\mathsf{a}(N)$ is deterministic (up to an event of probability zero): more precisely, 
 $a^{(i)}$ corresponds to the roots of the $(N-i)$-th derivative of a polynomial whose roots are given by $\mathsf{a}(N)$. 
\end{rmk}

In the present article, we will study the possible distributions of the {\it infinite}  coherent families of interlacing arrays, which are defined as follows: 
\begin{defn}
Let $N\ge 1$ and  $\theta \in (0,\infty]$. A coherent, or consistent, random infinite family of interlacing arrays of parameter $\theta$  is a family of random sequences  $\{ a^{(i)}\}_{i \geq 1}$, such that $ a^{(i)} \in W^i$, 
\begin{align}
a^{(1)}\prec a^{(2)}\prec \cdots \prec a^{(N-1)} \prec a^{(N)} \prec \cdots
\end{align}
and for all $N \geq 1$, $\{ a^{(i)}\}_{i \geq 1}^N$ is a coherent family of interlacing arrays with parameter $\theta$ and length $N$. 
\end{defn}
 
A coherent family of interlacing arrays can be viewed as an inhomogeneous Markov chain, with varying state space,  and moving backwards in discrete time. Its transition probability from level $k+1$ to level $k$ 
is given by $\Lambda_{k+1,k}^\theta$. More precisely, if $E$ is a measurable event with respect to the $\sigma$-algebra generated by the rows of indices larger than or equal to $k+1$, and if $X$ is a Borel subset of $W^k$, we have 
\begin{equation}
\mathbb{P} [ E \cap \{a^{(k)} \in X\}] = \mathbb{E} \left[ \mathbf{1}_{E}  \Lambda_{k+1,k}^\theta (a^{(k+1)},X) \right].  \label{Markovcondition}
\end{equation}
This fact is immediate from the definition in the case of families of finite length. The case of infinite length can then be deduced by using the monotone class theorem. 
This Markov chain point of view will be rather useful in the sequel.

As mentioned earlier, the coherent families of arrays are related to the conjugation-invariant random matrices, because of the  following proposition, which motivates the results of the present paper:   
\begin{prop}
Let $\beta \in \{1,2,4\}$, and let $M$ be a random, finite or infinite, self-adjoint matrix with real entries for $\beta =1$, complex entries for $\beta = 2$, quaternion entries for $\beta = 4$, whose distribution is a central measure, i.e. all the finite square top-left blocks are invariant in law by orthogonal, unitary and symplectic conjugation. Then, the spectra of the successive top-left submatrices of  $M$ form a coherent family of interlacing arrays with parameter $\theta = \beta/2$.  Conversely, 
any coherent family of interlacing arrays with parameter $\theta$ has the same law as the family of spectra of the successive top-left submatrices of a conjugation-invariant matrix whose size is given by the length of the family. 
 \end{prop}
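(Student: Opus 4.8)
The plan is to reduce both directions to a single classical computation --- that a principal corner of a fixed-spectrum random matrix has a Dixon-Anderson distribution --- and to propagate it through the tower of top-left submatrices by a disintegration-and-induction argument. Throughout, for $\beta\in\{1,2,4\}$ write $\mathbb{F}=\mathbb{R},\mathbb{C},\mathbb{H}$, let $\mathcal{H}_n$ denote the space of $n\times n$ self-adjoint matrices over $\mathbb{F}$, and let $K_n$ be $O(n),U(n),Sp(n)$ acting on $\mathcal{H}_n$ by conjugation, with $K_{n-1}\hookrightarrow K_n$ the block-diagonal embedding fixing the last coordinate. The ingredients are: (i) for $A\in\mathcal{H}_n$ one has $\mathrm{spec}(A)\in W^n$, and $\mathcal{O}_\lambda:=\{A\in\mathcal{H}_n:\mathrm{spec}(A)=\lambda\}$ is a single $K_n$-orbit, so a $K_n$-invariant probability measure carried by $\mathcal{O}_\lambda$ is unique and equals the law of $U\,\mathrm{diag}(\lambda)\,U^*$ with $U\sim\mathrm{Haar}(K_n)$; (ii) Cauchy interlacing: if $A_{n-1}$ is the top-left $(n-1)\times(n-1)$ block of $A_n\in\mathcal{H}_n$, then $\mathrm{spec}(A_{n-1})\prec\mathrm{spec}(A_n)$; (iii) the key input: if $A_{N+1}=U\,\mathrm{diag}(b)\,U^*$ with $b\in W^{N+1}$ deterministic and $U\sim\mathrm{Haar}(K_{N+1})$, then the spectrum of its top-left $N\times N$ block has law $\Lambda^{\theta}_{N+1,N}(b,\cdot)$ with $\theta=\beta/2$. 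For $b$ with distinct coordinates, (iii) is the density \eqref{DixonAndersonExplicit} (cf. \cite{Forrester}); the general case follows because both sides are weakly continuous in $b$ --- the left side since the corner spectrum depends continuously on $b$, the right side as noted after the Definition of $\Lambda^{\theta}_{N+1,N}$.

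\medskip\noindent\emph{Direct implication.} Let $M$ be central of finite size $n$, write $M_k$ for its top-left $k\times k$ block and $a^{(k)}=\mathrm{spec}(M_k)$; by (ii), $a^{(1)}\prec\cdots\prec a^{(n)}$ almost surely. I would prove, by downward induction on $k$, that conditionally on $(a^{(k)},\dots,a^{(n)})$ the matrix $M_k$ is distributed as $U\,\mathrm{diag}(a^{(k)})\,U^*$ with $U\sim\mathrm{Haar}(K_k)$; in particular its conditional law depends only on $a^{(k)}$. For $k=n$ this is the disintegration of the central law $\mathrm{Law}(M_n)$ over $a^{(n)}\sim\mu_n$, using the uniqueness in (i). Granting the statement at level $k$: conditionally on $(a^{(k)},\dots,a^{(n)})$ one has $M_k=U\,\mathrm{diag}(a^{(k)})\,U^*$, a law invariant under conjugation by $K_{k-1}\hookrightarrow K_k$ (left-invariance of Haar), and such a conjugation acts on the corner $M_{k-1}$ by conjugation; hence, under this conditioning, the law of $M_{k-1}$ is $K_{k-1}$-conjugation-invariant. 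Conditioning further on $a^{(k-1)}$, the law of $M_{k-1}$ is then carried by the single orbit $\mathcal{O}_{a^{(k-1)}}\subset\mathcal{H}_{k-1}$ and is $K_{k-1}$-invariant, so by (i) it equals that of $W\,\mathrm{diag}(a^{(k-1)})\,W^*$, $W\sim\mathrm{Haar}(K_{k-1})$ --- the claim at level $k-1$ --- while by (iii) the conditional law of $a^{(k-1)}$ given $(a^{(k)},\dots,a^{(n)})$ is $\Lambda^{\theta}_{k,k-1}(a^{(k)},\cdot)$. These conditional laws, together with $\mathrm{Law}(a^{(n)})=\mu_n$, are exactly the factorized joint law defining a coherent family of parameter $\theta$ (equivalently, the Markov property \eqref{Markovcondition}). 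For infinite $M$, apply this to each truncation $M_n$; the coherent families of length $n$ so obtained are consistent as $n$ grows, since they all arise from the single array $\{\mathrm{spec}(M_k)\}_{k\ge1}$, giving an infinite coherent family.

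\medskip\noindent\emph{Converse.} Given the law $\mathsf{M}$ of a coherent family of length $n$ with top-row law $\mu_n$, set $\mathrm{Law}(M_n):=\int\bigl(\text{law of }U\,\mathrm{diag}(\lambda)\,U^*,\ U\sim\mathrm{Haar}(K_n)\bigr)\,\mu_n(d\lambda)$ on $\mathcal{H}_n$. This is invariant under conjugation by $K_n$, and each top-left $k\times k$ block is invariant under $K_k$ (conjugating the block by $h\in K_k$ being the restriction of conjugating $M_n$ by $\mathrm{diag}(h,I_{n-k})$), so $M_n$ is central. By the direct implication, $\{\mathrm{spec}(M_k)\}_{k=1}^n$ is a coherent family of parameter $\theta$ with top-row law $\mu_n$; since a coherent family is a Markov chain with the fixed transitions $\Lambda^{\theta}_{k,k-1}$ and so is determined by its top-row law, this family has law $\mathsf{M}$. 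In the infinite case, build $\mathrm{Law}(M_n)$ this way for every $n$ from the top-row laws $\mu_n$ of the given infinite coherent family; using the consistency $\mu_{n-1}=\int\mu_n(d\lambda)\,\Lambda^{\theta}_{n,n-1}(\lambda,\cdot)$ and the inductive claim above (applied inside this canonical construction, which shows that, conditionally on its spectrum, the $(n-1)$-block is again of canonical form), these finite-dimensional laws are consistent under restriction to top-left blocks, so Kolmogorov's extension theorem produces an infinite central matrix whose submatrix spectra realize the prescribed family.

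\medskip The only genuinely non-trivial ingredient is the one-step corner computation (iii); the rest is the disintegration bookkeeping, where the point requiring care is that conditioning on the spectra of the successive submatrices keeps $M_k$ inside the canonical Haar-orbit ensembles so that the induction closes, together with the continuity argument passing from distinct to repeated eigenvalues and the Kolmogorov step in the infinite case.
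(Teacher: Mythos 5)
Your proof is correct and follows essentially the same route as the paper's: both reduce to the one-step corner computation (Neretin \cite{NeretinTriangles}, Forrester Prop.~4.2.1) for deterministic simple spectrum, extend it by continuity and then by conditioning/disintegration, deduce the infinite direct case from the definitions, handle the finite converse by Haar-conjugating a diagonal matrix built from the top row, and pass to infinite size via Kolmogorov's extension theorem. The only difference is that you spell out the downward induction on blocks that the paper compresses into ``the condition of deterministic spectrum can be removed by conditioning.''
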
 
\begin{proof} 
The first part of the proposition is proven by Neretin in \cite{NeretinTriangles} for finite matrices whose spectrum is deterministic and simple. 
The condition of simple spectrum can be dropped by a continuity arguement. Then, the condition of deterministic spectrum can be removed by conditioning. Finally, the infinite case is immediately deduced from the finite case from the definitions. For finite families of arrays, the converse result is proven by taking a diagonal matrix whose entries are given by the top row, and  by conjugating it with an independent uniform orthogonal, unitary or symplectic matrix.
For infinite families of arrays, we apply Kolmogorov's extension theorem to the distributions of the $N \times N$ matrices obtaind from the  $N$  first rows.
\end{proof} 

The possible probability distributions for infinite  coherent  families of interlacing arrays with a given parameter will be called {\it coherent distributions} or {\it consistent distributions}. They form a convex set, whose extremal points will be called {\it extremal coherent distributions} or  {\it extremal consistant distributions}. Notice that, the analogue of extremal coherent distributions for finite families correspond to the orbital distributions. 

Using Kolmogorov's extension theorem, it is not difficult to check that the coherent distributions of parameter $\theta \in (0,\infty]$ are canonically in bijection with the coherent sequences of probability measures defined as follows:
\begin{defn}
We say that a sequence of probability measures $\{\mu_N\}_{N\ge 1}$ on $\{W^N\}_{N\ge 1}$ is coherent, or consistent for the parameter $\theta \in (0,\infty]$, iff:
\begin{align}
\mu_{N+1}\Lambda_{N+1,N}^{\theta}=\mu_N, \ \forall N \ge 1.
\end{align}
Moreover, we say that such a sequence is extremal iff it cannot be decomposed as a convex combination of two other coherent sequences.
\end{defn}
The canonical  bijection between the coherent distributions on the infinite families of interlacing arrays  and the coherent sequences of probability measures induces a bijection between the extremal points of these respective convex sets.

For $1 \le K \le N$, let us define the  Markov kernel $\Lambda_{N,K}^{\theta}$ from $W^N$ to $W^K$, given by the composition
\begin{align*}
\Lambda^{\theta}_{N,K}=\Lambda^{\theta}_{K+1,K} \circ \cdots \circ \Lambda^{\theta}_{N,N-1}.
\end{align*}

It is clear that a consistent sequence $\{\mu_N\}_{N\ge 1}$ of probability measures satisfies: 
$$ \mu_N \Lambda^{\theta}_{N,K} = \mu_K$$
for $1 \leq K \leq N$. 

As we will see in the main theorem below, the classification of the extremal consistent distributions  is similar for all values of $\theta \in (0,\infty]$, and these measures are indexed by the same set $\Omega$ defined as follows:

\begin{defn} \label{Omegadef} We define the infinite dimensional space $\Omega$ by:
\begin{align}\label{Omegadefinition}
\Omega&=\bigg\{\omega=(\alpha^+,\alpha^-,\gamma_1,\gamma_2)\in \mathbb{R}^\infty \times \mathbb{R}^\infty \times \mathbb{R}  \times \mathbb{R}_+ \big| \nonumber\\
\alpha^+&=(\alpha_1^+\ge \alpha_2^+\ge \cdots \ge 0) \ ; \ \alpha^-=(\alpha_1^-\ge \alpha_2^-\ge \cdots \ge 0);\nonumber\\ & \sum_{i}^{}(\alpha_i^+)^2 + \sum_{i}^{}(\alpha_i^-)^2 < \infty \bigg\}. \nonumber
\end{align}
This space is in bijection with 
\begin{align}
\Omega'&=\bigg\{\omega'=(\alpha^+,\alpha^-,\gamma_1,\delta)\in \mathbb{R}^\infty \times \mathbb{R}^\infty \times \mathbb{R}  \times \mathbb{R}_+ \big| \nonumber\\
\alpha^+&=(\alpha_1^+\ge \alpha_2^+\ge \cdots \ge 0) \ ; \ \alpha^-=(\alpha_1^-\ge \alpha_2^-\ge \cdots \ge 0);\nonumber\\ & \sum_{i}^{}(\alpha_i^+)^2 + \sum_{i}^{}(\alpha_i^-)^2 \leq \delta \bigg\}, \nonumber 
\end{align}
via the correspondence 
$$\delta = \gamma_2 +  \sum_{i}^{}(\alpha_i^+)^2 + \sum_{i}^{}(\alpha_i^-)^2.$$
We endow $\Omega'$ with the topology of point-wise convergence. Then, we endow $\Omega$ with the topology for which the previous bijection is bi-continuous. 
\end{defn}
\begin{rmk}
The topology on $\Omega$ is {\it not} the topology of point-wise convergence: however, it induces the same Borel $\sigma$-algebra. 
\end{rmk}
We also need the following definitions:
\begin{defn}
If $(a^{(i)})_{i \geq 1}$ is a family of interlacing arrays, i.e. $a^{(i)} \in W^{i}$ and 
$a^{(i)}\prec a^{(i+1)}$ for all $i \geq 1$,  the {\it diagonal entries} $(d_i)_{i \geq 1}$ associated to $(a^{(i)})_{i \geq 1}$
are given by $d_1 = a^{(1)}$, and for $i \geq 1$, 
$$d_{i+1} := \sum_{j=1}^{i+1} a^{(i+1)}_j -  \sum_{j=1}^{i} a^{(i)}_j.$$
\end{defn} 
It is easy to check that this definition is consistent with the usual notion of diagonal entries in the case $\beta \in \{1,2,4\}$ where 
we have models of infinite random matrices. 
\begin{defn}
We define the function $\mathfrak{F}_{\omega,\theta}(\cdot)$ on $\mathbb{R}$ for any $\omega \in \Omega$ by:
\begin{align}
\mathfrak{F}_{\omega,\theta}(x)=e^{\i\gamma_1x-\frac{\gamma_2}{2\theta}x^ 2}\prod_{k=1}^{\infty}\frac{e^{-\i\alpha_k^+x}}{\left(1-\i\frac{\alpha_k^+x}{\theta}\right)^{\theta}}\prod_{k=1}^{\infty}\frac{e^{\i\alpha_k^-x}}{\left(1+\i\frac{\alpha_k^-x}{\theta}\right)^{\theta}}
\end{align}
for $\theta < \infty$, and by its limit 
$\mathfrak{F}_{\omega,\infty}(x) = e^{\i\gamma_1}$ for $\theta = \infty$, 
where throughout the paper $\i= \sqrt{-1}$. Note that, this is well-defined since $\sum_{}^{}(\alpha_i^+)^2 + \sum_{}^{}(\alpha_i^-)^2<\infty$. Observe also that $\mathfrak{F}_{\omega,\theta}(\cdot)$ admits a holomorphic extension to the horizontal strip:
\begin{align*}
\bigg\{x \in \mathbb{C}\bigg| |\Im(x)|<\frac{\theta}{\max\{\alpha_1^+,\alpha_1^- \}} \bigg\}.
\end{align*}
\end{defn}

The main result of the present paper gives a full classification of the possible consistent distributions on the infinite families of interlacing arrays, or equivalently, of the possible consistent sequences of probability measures.  
This classification is given by the following two theorems, the first one giving a characterization of  the extremal consistent distributions, and the second one describing the disintegration of the 
general consistent distributions in terms of extremal ones. 
\begin{thm}  \label{main1}
For a given parameter $\theta \in (0, \infty]$, the set of extremal consistent distributions on the infinite families of interlacing arrays  
is in bijection with the set $\Omega$, in such a way 
that the following holds.
If  $(a^{(i)})_{i \geq 1}$ follows the distribution  $\mathsf{M}^{\theta}_{\omega}$  associated with $\omega \in \Omega$,  then the corresponding diagonal entries 
 are i.i.d. with characteristic function $\mathfrak{F}_{\omega,\theta}$. For $\theta < \infty$, 
$$ d_i  = \mathcal{N}_{\gamma_1, \gamma_2/2 \theta} +  \sum_{k=1}^{\infty} \widetilde{\Gamma}_{\theta,\theta/\alpha_k^+}   -   \sum_{k=1}^{\infty} \widetilde{\Gamma}_{\theta,\theta/\alpha_k^-}  $$
where all the variables $\mathcal{N}$ and $\widetilde{\Gamma}$ are independent, 
$\mathcal{N}_{\mu, \sigma^2}$ being Gaussian with mean $\mu$ and variance $\sigma^2$, $\widetilde{\Gamma}_{\theta, \eta}$  being centered gamma distributed  with parameters $\theta$ and $\eta$, i.e. 
it has density
$$t \mapsto \frac{\eta^{\theta}}{ \Gamma (\theta)} \left(t + \frac{\theta}{\eta} \right)^{\theta-1} e^{-\eta 
	 \left(t + \frac{\theta}{\eta} \right) } \mathbf{1}_{t > - \frac{\theta}{\eta}}.$$
	 For $\theta = \infty$, the distribution $\mathsf{M}^{\infty}_{\omega}$ is a Dirac measure. More precisely, if $(a^{(i)})_{i \geq 1}$ follows  $\mathsf{M}^{\infty}_{\omega}$, then 
	almost surely, $d_i = \gamma_1$,  and $(a^{(i)}_j)_{1 \leq j \leq i}$ is given by the roots of the polynomial
	$$z \mapsto z^i + \sum_{j=1}^i c_j \frac{i!}{(i-j)!} z^{i-j}$$
	where the coefficients $c_j$ are determined by the following equality of power series in $z$: 
	$$1 + \sum_{j=1}^{\infty} c_j z^j = e^{-\gamma_1 z - \frac{\gamma_2}{2} z^2} 
\prod_{k \geq 1} e^{z \alpha^+_k} (1 - z \alpha^+_k) \prod_{k \geq 1} e^{-z \alpha^-_k} (1 + z \alpha^-_k),$$
 the limits involved in the infinite products being understood coefficient by coefficient.
\end{thm}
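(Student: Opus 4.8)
The plan is to reduce the classification of extremal consistent distributions to a statement about limiting behaviour of the Dixon--Anderson chain, and then to identify the limit via a moment/characteristic-function computation built on multivariate Bessel functions. The key observation is that a consistent sequence $\{\mu_N\}$ is extremal if and only if the corresponding coherent family has a trivial tail $\sigma$-algebra, and by a standard martingale-convergence argument (Doob's theorem applied to the backward Markov chain \eqref{Markovcondition}) every extremal distribution arises as a limit of orbital distributions: more precisely there exist $\mathsf{a}(N)\in W^N$ such that the orbital distributions with top row $\mathsf{a}(N)$ converge, in the sense that for each fixed level $k$ the measures $\Lambda^\theta_{N,k}(\mathsf{a}(N),\cdot)$ converge to $\mu_k$. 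So the first step is to describe the set of sequences $(\mathsf{a}(N))_{N\ge 1}$ for which this convergence holds, and to identify the limit. The guess, dictated by the classical $\beta=2$ case, is that convergence forces the extremal eigenvalues $\mathsf{a}(N)_j/N \to \alpha_j^+$ (top) and $\mathsf{a}(N)_{N+1-j}/N \to -\alpha_j^-$ (bottom), with $\tfrac1N\sum_j \mathsf{a}(N)_j \to \gamma_1$ and $\tfrac1N\sum_j \mathsf{a}(N)_j^2 \to \gamma_2 + \sum(\alpha_j^+)^2 + \sum(\alpha_j^-)^2 = \delta$, and the parametrization by $\Omega$ (equivalently $\Omega'$) is exactly chosen to make these the right coordinates.

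Second, I would compute the characteristic function of a single diagonal entry $d_{k}$ under the orbital distribution with top row $\mathsf{a}(N)$ and let $N\to\infty$. The natural tool here is the following identity, which is why the paper stresses both forms of the Dixon--Anderson kernel: applying $\Lambda^\theta_{N,1}$ repeatedly, $d_1 = a^{(1)}$ is distributed as a linear statistic of $\mathsf{a}(N)$ whose Laplace/Fourier transform can be written in closed form via the multivariate Bessel function $\mathcal{B}^\theta_{\mathsf{a}(N)}$. Concretely, the push-forward of $\mathsf{a}(N)$ under $\Lambda^\theta_{N,1}(\cdot)$ has characteristic function
\[
\mathbb{E}\big[e^{\i x a^{(1)}}\big] \;=\; \mathcal{B}^\theta_{\mathsf{a}(N)}\!\big(\i x/\theta,0,\dots,0\big)\Big/\text{(normalization)},
\]
and by the product formula for the Dixon--Anderson kernel iterated along a column, the characteristic function of the $k$-th diagonal entry $d_k$ has an analogous product form. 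Taking $N\to\infty$ with the eigenvalue asymptotics above, each factor $(1 - \i \tfrac{\mathsf{a}(N)_j x}{\theta N})^{\theta}$ tends to the corresponding factor of $\mathfrak{F}_{\omega,\theta}$, the contributions of the bulk eigenvalues combine (after the Gaussian central-limit rescaling that produces the $e^{-\gamma_2 x^2/2\theta}$ term and the $e^{\i\gamma_1 x}$ term from the trace) into the claimed expression, and one recognizes the law of $\mathcal{N}_{\gamma_1,\gamma_2/2\theta} + \sum_k \widetilde\Gamma_{\theta,\theta/\alpha_k^+} - \sum_k \widetilde\Gamma_{\theta,\theta/\alpha_k^-}$. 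Independence of the $d_i$ across levels then follows because, conditionally on row $a^{(i)}$, the increment $d_{i+1}$ depends only on how row $a^{(i+1)}$ sits above it, and the projective-limit structure makes these increments independent in the limit; alternatively one computes the joint characteristic function of $(d_1,\dots,d_k)$ directly and sees it factorizes.

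Third, for $\theta=\infty$ the kernel $\Lambda^\infty_{N,k}$ is deterministic: $a^{(k)}$ consists of the roots of the $(N-k)$-th derivative (up to scaling) of the monic polynomial with roots $\mathsf{a}(N)$. Here the analysis is purely about polynomials: one shows that a consistent deterministic sequence corresponds to a choice of $\mathsf{a}(N)$ such that the rescaled polynomials converge, coefficient by coefficient, to an entire function of genus $\le 2$ whose Weierstrass/Hadamard factorization is exactly $e^{-\gamma_1 z - \gamma_2 z^2/2}\prod_k e^{z\alpha_k^+}(1-z\alpha_k^+)\prod_k e^{-z\alpha_k^-}(1+z\alpha_k^-)$; the coefficients $c_j$ of its reciprocal-ordered expansion give the stated finite-$k$ polynomials via the classical relation between a polynomial and its derivatives (the substitution $z^{i-j}\mapsto \frac{i!}{(i-j)!}z^{i-j}$ is precisely the effect of repeated differentiation followed by rescaling). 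This is the $\theta\to\infty$ degeneration of the characteristic-function computation, and it can either be done as a separate (easier) argument or recovered as a limit.

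The main obstacle will be the two-sided control in the second step: showing that convergence of $\Lambda^\theta_{N,k}(\mathsf{a}(N),\cdot)$ at \emph{every} level $k$ really does force the full list of asymptotics (extremal eigenvalues, first and second empirical moments) and, conversely, that these asymptotics are \emph{sufficient} for convergence --- i.e. the "only if" and tightness parts. The delicate point is handling the second moment $\tfrac1N\sum \mathsf{a}(N)_j^2$: it need not converge a priori, and one must show that along a convergent subsequence its limit is well-defined, that the portion not accounted for by $\sum(\alpha_j^\pm)^2$ is nonnegative (giving $\gamma_2 \ge 0$), and that this is exactly the Gaussian-variance parameter appearing in $\mathfrak{F}_{\omega,\theta}$. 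This is where the estimate $\sum(\alpha_j^+)^2 + \sum(\alpha_j^-)^2 < \infty$ and the definition of $\Omega'$ via $\delta$ do their work, and controlling it rigorously --- via uniform integrability of the empirical measures pushed down the chain, using the explicit density \eqref{DixonAndersonExplicit} in Section~6 and the first form of the definition in Sections~2 and~5 --- is the technical heart of the argument.
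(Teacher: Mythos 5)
Your high-level strategy coincides with the paper's: show that extremal consistent measures arise as limits of orbital distributions (the tail-triviality/reversed-martingale argument), characterize which top-row sequences give convergence via the Olshanski--Vershik conditions, identify the limiting law via multivariate Bessel functions, and treat $\theta=\infty$ by polynomials and root interlacing. That much is right. But you have misdiagnosed where the real difficulty lies, and you gloss over three ingredients that actually carry the argument.

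First, the claim that ``the projective-limit structure makes the increments independent in the limit'' is not a proof and would fail as stated. At finite $N$, the diagonal entries $d_1,\dots,d_K$ of an orbital process are \emph{exchangeable but not independent}---their joint Laplace transform is the Bessel function $\mathfrak{B}^N_{a^{(N)}}(y_1,\dots,y_K;\theta)$, which does not factorize. The independence in the limit is precisely the hard asymptotic statement
\begin{equation*}
\mathfrak{B}^N_{a^{(N)}}(y_1,\dots,y_K;\theta)\ \longrightarrow\ \prod_{j=1}^{K}\mathfrak{F}_{\omega,\theta}(-\i y_j),
\end{equation*}
and the paper proves it by induction on $K$ via a Pieri-type contour-integral representation for the product $\mathfrak{B}^N_a(-y_1,\dots,-y_m)\,\mathfrak{B}^N_a(-y)$ (a degeneration of formulas for Jack/Macdonald polynomials) together with a Laplace-type asymptotic on a shrinking cube $[0,N^{-u}]^m$. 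Without an ingredient of this kind---or some replacement for it---the factorization is unproved, and this, not the control of the second empirical moment, is the technical heart. Your ``alternatively one computes the joint characteristic function directly and sees it factorizes'' is the statement of the gap, not its resolution.

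Second, you say nothing about why convergence of $\mathbb{E}[\exp(\sum y_k d_k)]$ suffices to pin down the law of the entire interlacing array, nor why the limiting measure is extremal. For the first, one needs an injectivity and a Lévy-type continuity theorem for the (symmetrized) Dunkl transform $\mathfrak{D}^\theta_K$, so that convergence of the Bessel transforms yields weak convergence of the row laws, and so that a consistent distribution is determined by its diagonal entries (this hinges on $\mathsf{D}^\theta_N[L^1]\supset$ the symmetric Schwartz space). For extremality, the paper routes the joint law of the diagonal entries through de Finetti: since under $\mathsf{M}^\theta_\omega$ the $d_i$ are i.i.d., the de Finetti image is a Dirac mass, hence extremal, and the map from consistent measures to exchangeable sequences is injective and affine. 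None of this is in your proposal, though it is essential to close the loop on ``bijection with $\Omega$''.

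Third, the necessity direction (convergence of orbital measures at every level forces the O-V conditions) is dismissed as ``the guess dictated by $\beta=2$''. The actual argument is delicate: one uses that $a^{(1)}$ under the orbital distribution is $\sum_j\alpha_j^{(N)}a_j^{(N)}$ with Dirichlet weights, deduces that $\sum_j(a_j^{(N)}/N)^2$ is bounded by a squared-modulus estimate on the characteristic function, and then extracts convergence of each power sum $S^{(N)}_p$ by a careful analysis modulo $2\pi\i q$ together with the lemma that a real sequence converging modulo every nonzero real converges. You need this chain, or something like it; it does not come for free.

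A minor correction: $\mathfrak{B}_a(y_1,\dots,y_N;\theta)$ is already normalized to equal $1$ at $y=0$ and equals $\mathbb{E}[\exp(\sum y_k d_k)]$ directly, so there is no extra $/\theta$ scaling nor a denominator ``normalization'' as you wrote. Also, the paper treats $\theta=\infty$ as a separate elementary argument (on roots of iterated derivatives and Newton's identities), not as a limit of the finite-$\theta$ case.

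%Summary.
In short, your blueprint is the right one, but the proposal currently asserts the three hardest steps rather than proving them: (i) the multidimensional Bessel factorization via a Pieri/contour formula and Laplace asymptotics; (ii) the Dunkl-transform continuity/injectivity machinery to convert that into convergence and uniqueness of row laws; and (iii) the necessity argument via modulo-convergence. Each of these is a genuine gap that a referee would flag.
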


\begin{rmk}
The result above gives the law of $a_1^{(1)}$ but not explicitly the law of  $\left(a_1^{(N)},\dots,a_N^{(N)}\right)$ for $N\ge 2$. However, for any consistent distribution on interlacing arrays of length $N$ and finite parameter $\theta$, its diagonal entries $\left(d_1,\dots,d_N\right)$ uniquely determine it, as we see in Section \ref*{sectionsufficiency}. Moreover, the law of any row $\left(a_1^{(K)},\dots,a_K^{(K)}\right)$ of a random consistent interlacing array under an extremal distribution $\mathsf{M}^{\theta}_{\omega}$ is uniquely determined through its so-called Dunkl transform which is given as a product of functions $\mathfrak{F}_{\omega,\theta}$, see Section \ref{sectionsufficiency} and display (\ref{ExtremalDunklExplicit}) in particular.
\end{rmk}

\begin{rmk}
In Remark 8.3 in \cite{OlshanskiVershik} it is suggested that the method of moments developed in that paper can be used to prove, for general $\theta<\infty$, an equivalent statement (essentially the case $K=1$ of Propositions \ref{necessity} and \ref{sufficiency} below) of Theorem \ref{main1} in the particular case of the bottom entry $a_1^{(1)}$, see Theorem 8.1 in \cite{OlshanskiVershik}. The distribution of the whole consistent interlacing array is not discussed there, however it seems that the approach of \cite{OlshanskiVershik} can be extended to study this as well (we thank the referee for the following interesting observation). A key ingredient in the proof of \cite{OlshanskiVershik}, see Section 5 therein, is an expansion of the multivariate Bessel function for $\theta=1$ (called spherical function in \cite{OlshanskiVershik}) in terms of Schur polynomials. An analogous expansion of Bessel functions for general $\theta<\infty$, with Schur polynomials now replaced by Jack polynomials, was later proven by Okounkov and Olshanski in Section 4 of \cite{OkounkovOlshanskiShifted}. It seems that making use of this formula from \cite{OkounkovOlshanskiShifted} and adapting the work in Section 6 of \cite{OlshanskiVershik} the approach of Olshanski and Vershik can be extended to the case of general $\theta<\infty$. It would still be interesting if these details are worked out somewhere. Finally, as far as we are aware, nothing was known about the $\theta=\infty$ case prior to our work.
\end{rmk}

\begin{thm} \label{main2}
For all $\theta \in (0, \infty]$, and for all probability measures $\nu$ on $\Omega$,  endowed with the topology of point-wise convergence, 
there exists a consistent distribution $\mathsf{M}^{\theta}_{\nu}$ on the infinite families of interlacing arrays, such that 
\begin{equation}
\mathsf{M}^{\theta}_{\nu}[ E]  = \int_{\Omega} \mathsf{M}^{\theta}_{\omega} [E] d \nu (\omega) \label{Mthetanu}
\end{equation}
for all events $E$, the map $\omega \mapsto \mathsf{M}^{\theta}_{\omega} [E] $ being measurable. 
Moreover, the map $\nu \mapsto \mathsf{M}^{\theta}_{\nu}$ is a bijection between the probability measures on $\Omega$ and the consistent distributions
on the infinite families of interlacing arrays, for the parameter $\theta$. 
\end{thm}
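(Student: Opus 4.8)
## Proof Proposal for Theorem \ref{main2}

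The plan is to deduce Theorem \ref{main2} from Theorem \ref{main1} together with general principles of Choquet theory, once we have established the correct measurable/topological framework. The convex set $\mathcal{C}^\theta$ of consistent distributions on infinite interlacing arrays sits inside the space of probability measures on the Polish space $\prod_{i\ge 1} W^i$, and by \eqref{Markovcondition} it is defined by countably many linear constraints (the coherence relations $\mu_{N+1}\Lambda^\theta_{N+1,N}=\mu_N$, using the bijection with coherent sequences of measures noted in the excerpt). Hence $\mathcal{C}^\theta$ is a convex, weakly-closed subset of a space of probability measures, and being tight—this needs a short argument, see below—it is in fact weakly compact and metrizable, so it is a Choquet simplex-candidate to which the existence half of the Choquet–Bishop–de Leeuw theorem applies: every point of $\mathcal{C}^\theta$ is the barycenter of a probability measure supported on the extreme points. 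The main work is then to \emph{identify} this abstract decomposition with the concrete integral \eqref{Mthetanu} over $\Omega$, and to upgrade existence of the decomposition to uniqueness.

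First I would set up the measurable structure. Theorem \ref{main1} gives a bijection $\omega\mapsto \mathsf{M}^\theta_\omega$ between $\Omega$ and the extreme points of $\mathcal{C}^\theta$. I would check that this map is a Borel isomorphism onto the set of extreme points: one direction is that $\omega\mapsto \mathsf{M}^\theta_\omega[E]$ is measurable for a generating algebra of cylinder events $E$, which follows because the finite-dimensional marginals $\mu^\theta_{N,\omega}$ of $\mathsf{M}^\theta_\omega$ depend measurably (indeed continuously, by the Remark following Theorem \ref{main1}, through the Dunkl/Fourier transform expressed via $\mathfrak{F}_{\omega,\theta}$) on $\omega$; the continuity of $\mathfrak{F}_{\omega,\theta}$ in $\omega$ for the topology of Definition \ref{Omegadef} is the point where the non-pointwise topology on $\Omega$ is used. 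The inverse map is then Borel by Lusin–Souslin. Next, for any probability measure $\nu$ on $\Omega$, formula \eqref{Mthetanu} \emph{defines} a consistent distribution $\mathsf{M}^\theta_\nu$: it is a probability measure because each $\mathsf{M}^\theta_\omega$ is, and it satisfies the coherence relations because each $\mathsf{M}^\theta_\omega$ does and the relations \eqref{Markovcondition} are preserved under mixtures (integrate both sides against $\nu$). This establishes the map $\nu\mapsto\mathsf{M}^\theta_\nu$ is well-defined into $\mathcal{C}^\theta$, and it is clearly affine.

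The surjectivity of $\nu\mapsto\mathsf{M}^\theta_\nu$ is where Choquet enters: given any $\mathsf{M}\in\mathcal{C}^\theta$, the abstract ergodic decomposition produces a probability measure $\widetilde\nu$ on the extreme points, which via the Borel isomorphism above transports to a measure $\nu$ on $\Omega$ with $\mathsf{M}=\mathsf{M}^\theta_\nu$. The tightness needed to run this argument can be obtained from Theorem \ref{main1} applied to the one-dimensional marginal: the law of $a^{(1)}_1=d_1$ under any $\mathsf{M}^\theta_\omega$ is the infinitely divisible law with characteristic function $\mathfrak{F}_{\omega,\theta}$, and the full marginal $\mu^\theta_N$ is controlled by the interlacing constraints $a^{(1)}\prec\cdots\prec a^{(N)}$; one shows a uniform tail bound by expressing the relevant moments (or the behaviour of $\mathfrak{F}$ near the origin) in terms of $\gamma_1,\gamma_2$ and $\sum(\alpha^\pm_k)^2$, which is exactly the quantity controlled on $\Omega$. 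For injectivity/uniqueness of $\nu$, the cleanest route is to avoid proving $\mathcal{C}^\theta$ is a genuine simplex and instead argue directly: if $\mathsf{M}^\theta_\nu=\mathsf{M}^\theta_{\nu'}$ then the two mixtures agree on all cylinder events, and since the marginals $\mu^\theta_{N,\omega}$ separate points of $\Omega$ (again by Theorem \ref{main1}, as the parameters $\omega$ can be recovered from the large-$N$ asymptotics of the rows, e.g. the extreme entries $a^{(N)}_1/N\to\alpha^+_1$ and the diagonal-entry law), the map $\omega\mapsto(\mu^\theta_{N,\omega})_{N\ge1}$ is injective with Borel image, so $\nu=\nu'$ by a standard monotone-class/uniqueness-of-disintegration argument.

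The main obstacle I anticipate is the reconciliation of topologies: the decomposition measure $\nu$ must live on $\Omega$ with the topology of Definition \ref{Omegadef} (so that $\omega\mapsto\mathsf{M}^\theta_\omega$ is continuous and the extreme-point set is nicely embedded), yet Theorem \ref{main2} asserts the bijection holds with $\Omega$ carrying the \emph{pointwise} topology. Since the Remark after Definition \ref{Omegadef} notes these two topologies generate the same Borel $\sigma$-algebra, the set of probability measures is the same either way, so the statement is consistent—but making this transition rigorous, i.e. checking that the Choquet decomposition's measure is Borel for the pointwise structure and that no information is lost, requires care. A secondary technical point is verifying that $\mathcal{C}^\theta$ is weakly closed: one must check that a weak limit of consistent distributions is consistent, i.e. that the constraint $\mu_{N+1}\Lambda^\theta_{N+1,N}=\mu_N$ passes to the limit, which uses precisely the continuity of $b\mapsto\Lambda^\theta_{N+1,N}(b,\cdot)$ recorded in the excerpt after the Definition of the Dixon–Anderson kernel (this is where the first, root-of-a-polynomial form of the kernel pays off). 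I expect everything else to be routine once these framework issues are settled.
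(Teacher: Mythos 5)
Your high-level plan --- classify the extremals via Theorem \ref{main1}, establish a Borel isomorphism between $\Omega$ and the set of extreme consistent distributions, and then deduce the integral representation by a Choquet-type decomposition --- is broadly the same spirit as the paper's. However, two of your key steps do not go through.

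First, the convex set $\mathcal{C}^\theta$ of consistent distributions is \emph{not} weakly compact, so the classical Choquet--Bishop--de Leeuw theorem cannot be applied as you propose. Concretely, taking $\omega_n\in\Omega$ with $\alpha_1^+=n$ and all other parameters zero, the first diagonal entry $d_1$ under $\mathsf{M}^\theta_{\omega_n}$ is $\widetilde{\Gamma}_{\theta,\theta/n}$, whose variance $n^2/\theta$ blows up; hence the one-dimensional marginals of $\{\mathsf{M}^\theta_{\omega_n}\}_n$ escape to infinity and no uniform tightness holds on $\mathcal{C}^\theta$. Your sketched tightness argument controls $d_1$ only \emph{conditionally} on $\omega$, which is vacuous once $\omega$ is allowed to range over all of $\Omega$. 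The paper handles precisely this obstruction by appealing to Winkler \cite{Winkler}: Theorem 4.1.3 there gives existence and uniqueness of the barycentric decomposition for \emph{Polish} simplices arising as inverse limits $\underset{\leftarrow}{\lim}\,\mathcal{M}_p(W^N)$, without any compactness hypothesis on the limit. Your approach would need to be recast in that (or some equivalent non-compact) framework.

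Second, and more seriously, your injectivity argument is a non sequitur. You propose to sidestep proving $\mathcal{C}^\theta$ is a simplex by arguing that injectivity of $\omega\mapsto(\mu^\theta_{N,\omega})_{N\ge1}$ forces $\nu=\nu'$ whenever $\mathsf{M}^\theta_\nu=\mathsf{M}^\theta_{\nu'}$. But injectivity of a map on points does not give uniqueness of a mixing measure for the associated affine map $\nu\mapsto\int\mathsf{M}^\theta_\omega\,d\nu(\omega)$: in any non-simplex a non-extreme point admits several distinct decompositions into (pairwise distinct) extremes. The uniqueness here is a genuine theorem. Your parenthetical remark that ``the parameters $\omega$ can be recovered from the large-$N$ asymptotics of the rows'' is in fact pointing at the right mechanism, but it must be upgraded from a statement about the map $\omega\mapsto\mathsf{M}^\theta_\omega$ to a statement about the random array itself: one needs to show that under $\mathsf{M}^\theta_\nu$ the rows $\{a^{(N)}\}$ almost surely satisfy the O-V conditions and that the resulting limit point is distributed according to $\nu$. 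That is exactly Theorem \ref{ConsistentAlmostSureConvergence} in the paper, whose proof rests on the reversed-martingale ergodic method (Proposition \ref{ExtremalityConvergenceOfOrbital}) and the necessity of the O-V conditions (Proposition \ref{necessity}). Your proposal contains no substitute for this, so the injectivity half of the bijection remains unproved.
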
 

\paragraph{Organisation of the paper} The present article is structured as follows. In Section \ref{sectionthetainfty}, we prove the main theorems in the case where $\theta = \infty$. 
In Section \ref{sectionmainsteps}, we state several propositions which together imply the main theorem for $\theta < \infty$. 
These propositions are proven in Sections \ref{ConvergenceOfOrbital} to \ref{sectiondisintegration}. More precisely, in Section \ref{ConvergenceOfOrbital}, we show that consistent measures on infinite families of interlacing arrays are limits, in a sense which is made precise, of 
orbital distributions. In Section \ref{sectionnecessity}, we show that such convergence of orbital distributions can only occur if the top rows satisfies a particular condition, which has already been stated by Olshanski and Vershik in \cite{OlshanskiVershik} in the case of unitarily invariant ensembles. In Section \ref{sectionsufficiency}, we show that the conditions of Olshanski and Vershik are sufficient to ensure a convergence of the orbital measures 
towards an extremal consistent measure on infinite families of interlacing arrays. In Section \ref{sectiondisintegration}, we show that consistent distribution on infinite families of interlacing arrays can be written as convex combination of 
extremal measures, which corresponds to the statement of Theorem \ref{main2}. In Section  \ref{sectionHuaPickrell}, we discuss the consequences of Theorems \ref{main1} and \ref{main2} for particular $\beta$-ensembles which form 
consistent sequences of probability measures: the $\beta$-Hua-Pickrell and the $\beta$-Bessel point processes. For the $\beta$-Hua-Pickrell process with $s = 0$, we deduce an alternative proof of the result by Killip and Stoiciu \cite{KillipStoiciu} which gives the convergence in law of the point process of the renormalized eigenangles of the Circular beta ensemble when the number of points goes to infinity. 
Moreover, we provide a natural coupling for which a strong convergence occurs, and the result extends to the case of general $s$. Finally, we give an analogous result on the almost sure convergence of the rescaled eigenvalues of the general $\beta$-Laguerre ensemble at the hard edge towards the $\beta$-Bessel point process, that was first proven by Ramirez and Rider in \cite{RiderRamirez}.

\paragraph{Acknowledgements} We thank Vadim Gorin for some useful pointers to the literature. We are grateful to an anonymous referee for a careful reading of the paper and a number of interesting comments and suggestions. The research of T.A. was supported by ERC Advanced Grant 740900 (LogCorRM).

\section{The case $\theta = \infty$} \label{theta=infty} \label{sectionthetainfty}
In order to solve the case $\theta = \infty$ of the main theorem, we will need the following definitions, which will also be useful later. 
\begin{defn}
For $a^{(N)} \in W^N$ we define the quantities 
\begin{align*}
\alpha_{i,N}^+\left(a^{(N)}\right)&=\begin{cases}
\frac{\max\{a^{(N)}_{i},0\}}{N}  \ & i=1,\dots, N\\
0 & i=N+1,N+2,\dots
\end{cases},\\
\alpha_{i,N}^-\left(a^{(N)}\right)&=\begin{cases}
\frac{\max\{-a^{(N)}_{N+1-i},0\}}{N}  \ & i=1,\dots, N\\
0 & i=N+1,N+2,\dots
\end{cases},\\
\gamma_1^{(N)}(a^{(N)})&=\sum_{i}^{}\alpha_{i,N}^+(a^{(N)}) -\sum_{i}^{}\alpha_{i,N}^-(a^{(N)}) = 
\frac{1}{N} \sum_{i=1}^N a^{(N)}_i,\\
\delta^{(N)}(a^{(N)})&=\sum_{i}^{}\left(\alpha_{i,N}^+(a^{(N)})\right)^2+\sum_{i}^{}\left(\alpha_{i,N}^-(a^{(N)})\right)^2 .
\end{align*}
\end{defn}

\begin{defn}
We say that a sequence $\{a^{(N)}\}_{N\ge 1}$ in $\{W^N\}_{N\ge 1}$ satisfies the Olshanski-Vershik (O-V) conditions iff the following limits exist:
\begin{align}
\alpha_i^{\pm}&=\lim_{N \to \infty}\alpha_{i,N}^{\pm}(a^{(N)}), \forall i \ge 1,\\
\gamma_1&=\lim_{N \to \infty}\gamma_1^{(N)}(a^{(N)}),\\
\delta&=\lim_{N \to \infty}\delta^{(N)}(a^{(N)}).
\end{align}
In this case, by Fatou's lemma, 
$$ \gamma_2 := \delta - \left(\sum_{i}^{}(\alpha_i^+)^2 + \sum_{i}^{}(\alpha_i^-)^2 \right)$$
is nonnegative, and 
we will say that $\omega= \left(\alpha^+,\alpha^-,\gamma_1,\gamma_2 \right) \in \Omega$ is the limit point of the sequence $\{a^{(N)}\}_{N\ge 1}$. 
\end{defn}
The O-V conditions might look somewhat artificial at first sight. However, they are more natural when we consider the following result: 
\begin{prop} \label{OVsumsofpowers}
For $a^{(N)} \in W^N$, the O-V conditions are satisfied if and only if for all integers $p \geq 1$, 
the sum of the $p$-th powers of the points of $a^{(N)}/N$ converges to a limit when $N$ goes to infinity. 
\end{prop}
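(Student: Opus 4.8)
The plan is to relate the two kinds of convergence via power sums and elementary symmetric quantities. Write $p_k(a^{(N)}/N) = \sum_{j=1}^N (a^{(N)}_j/N)^k$ for the $k$-th power sum of the renormalized points. First I would observe that the O-V conditions package exactly the convergence of: (i) each individual (signed, renormalized) eigenvalue $\alpha^{\pm}_{i,N}$, (ii) the sum $\gamma_1^{(N)}$, which is $p_1(a^{(N)}/N)$, and (iii) the quantity $\delta^{(N)}$, which is $p_2(a^{(N)}/N)$. So the forward direction — O-V $\Rightarrow$ convergence of all $p_k$ — is the substantive one, and the converse is comparatively soft.

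For the \textbf{converse} (convergence of all $p_k$ $\Rightarrow$ O-V), I would argue as follows. Convergence of $p_1$ is $\gamma_1^{(N)}\to\gamma_1$, and convergence of $p_2$ is $\delta^{(N)}\to\delta$ since $p_2(a^{(N)}/N) = \sum_i (\alpha^+_{i,N})^2 + \sum_i (\alpha^-_{i,N})^2 = \delta^{(N)}$. It remains to get convergence of each $\alpha^{\pm}_{i,N}$. Here I would note that $\sup_N p_2(a^{(N)}/N) = \sup_N \delta^{(N)} < \infty$ forces $\alpha^+_{1,N} = \max_i \alpha^+_{i,N} \le \sqrt{\delta^{(N)}}$ to be bounded, and likewise for all later $\alpha^{\pm}_{i,N}$; so along any subsequence we may extract limits of every $\alpha^{\pm}_{i,N}$, and the tail is controlled uniformly since $\sum_{i > m}(\alpha^+_{i,N})^2 \le \delta^{(N)}$. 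The point is then that the full sequence of sorted moduli is determined by the limits of the $p_k$: the limiting values $\{\alpha^+_i\}$, $\{\alpha^-_i\}$ together with a possible Gaussian-type defect $\gamma_2 \ge 0$ are the data of a limiting "measure" and the $p_k$ limits are its moment-like functionals; two subsequential limits with the same $p_k$-limits for all $k$ must coincide, because one can recover the positive and negative parts separately (e.g. via $\lim_N p_{2k}(a^{(N)}/N) \to \sum_i (\alpha^+_i)^{2k} + \sum_i(\alpha^-_i)^{2k}$ and $\lim_N \big(p_{2k+1} \pm \text{sym}\big)$ to separate signs), and an absolutely summable-in-squares sequence is determined by its power sums of all orders. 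Hence all subsequential limits agree, so the limits $\alpha^{\pm}_i = \lim_N \alpha^{\pm}_{i,N}$ exist, which together with $\gamma_1^{(N)}\to\gamma_1$ and $\delta^{(N)}\to\delta$ is precisely the O-V condition.

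For the \textbf{forward direction} (O-V $\Rightarrow$ all $p_k$ converge), fix $k \ge 1$. Split $p_k(a^{(N)}/N)$ according to the sign of the coordinate:
\begin{align*}
p_k\big(a^{(N)}/N\big) = \sum_{i \ge 1} \big(\alpha^+_{i,N}\big)^k + (-1)^k \sum_{i \ge 1} \big(\alpha^-_{i,N}\big)^k.
\end{align*}
For $k \ge 2$ each term is bounded by $(\alpha^+_{1,N})^{k-2}(\alpha^+_{i,N})^2$, and the tail sum over $i > m$ is at most $(\alpha^+_{1,N})^{k-2}\sum_{i>m}(\alpha^+_{i,N})^2 \le (\alpha^+_{1,N})^{k-2}\,\delta^{(N)}$; since $\alpha^+_{1,N}$ and $\delta^{(N)}$ converge (hence are bounded), this tail is uniformly small in $N$ once $m$ is large — using that $\sum_{i>m}(\alpha^+_i)^2 \to 0$. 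Each finite partial sum $\sum_{i \le m}(\alpha^+_{i,N})^k$ converges to $\sum_{i\le m}(\alpha^+_i)^k$ by O-V, so $p_k(a^{(N)}/N) \to \sum_i (\alpha^+_i)^k + (-1)^k \sum_i (\alpha^-_i)^k$. The case $k=1$ is immediate: $p_1(a^{(N)}/N) = \gamma_1^{(N)} \to \gamma_1$. I expect the \textbf{main obstacle} to be the uniform tail control: one must be careful that $\sup_N \delta^{(N)} < \infty$ (which follows from convergence of $\delta^{(N)}$) is what tames both the $k \ge 2$ moments in the forward direction and the compactness argument in the converse, and that in the converse one genuinely uses the absolute convergence $\sum_i (\alpha^{\pm}_i)^2 < \infty$ to conclude that the sorted sequence of limit values is uniquely pinned down by all its power sums rather than merely by finitely many of them.
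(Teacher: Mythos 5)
Your forward direction has a concrete gap. You bound the $i$-th term by $(\alpha^+_{1,N})^{k-2}(\alpha^+_{i,N})^2$, so your tail estimate is $(\alpha^+_{1,N})^{k-2}\sum_{i>m}(\alpha^+_{i,N})^2 \leq (\alpha^+_{1,N})^{k-2}\,\delta^{(N)}$ --- a bound independent of $m$. You then assert this is small for large $m$ uniformly in $N$, ``using that $\sum_{i>m}(\alpha^+_i)^2\to 0$.'' But that inference fails: $\sum_{i>m}(\alpha^+_{i,N})^2$ need not become small uniformly in $N$ when the Gaussian defect $\gamma_2$ is positive (take $\alpha^+_{i,N}=\sqrt{\gamma_2/N}$ for $1\le i\le N$: for every fixed $m$ the tail tends to $\gamma_2>0$ as $N\to\infty$). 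The correct fix --- and what the paper does --- is to use the sharper prefactor $\alpha^+_{m+1,N}$ on the tail, giving $\sum_{i>m}(\alpha^+_{i,N})^k \le (\alpha^+_{m+1,N})^{k-2}\,\delta^{(N)}$; the upper limit over $N$ is then $(\alpha^+_{m+1})^{k-2}\,\delta$, which does vanish as $m\to\infty$ because $\alpha^+_{m+1}\to 0$.

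Your converse takes a genuinely different route. You extract subsequential limits of all $\alpha^{\pm}_{i,N}$ (after a diagonalization) and claim all such limits agree because the limiting data is determined by the power-sum limits, proposing to disentangle $\{\alpha^+_i\}$ from $\{\alpha^-_i\}$ from the quantities $\sum_i(\alpha^+_i)^k+(-1)^k\sum_i(\alpha^-_i)^k$, $k\ge 3$. This identification step is true (recover the top element from $2k$-power asymptotics and peel off), but it is nontrivial and you only sketch it, and the whole scheme also implicitly reuses the forward implication along the subsequence. The paper avoids all of this: it shows that $\sum_j f(a^{(N)}_j/N)$ converges for every continuous $f$ vanishing near the origin, by uniformly approximating $f(x)/x^2$ by a polynomial on the a priori bounded range $[-S,S]$; it then derives a contradiction from an oscillating $\alpha^{\pm}_{i,N}$ by choosing a continuous step-like $f$ pinned between the two would-be limit points. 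That argument is more elementary and sidesteps the moment-uniqueness issue entirely. If you keep your route, you must fully justify the uniqueness claim rather than assert it.
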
 
\begin{proof}
Let us assume that $\{a^{(N)}\}_{N\ge 1}$ satisfies the O-V conditions. The convergence of the sums of the $p$-th powers is satisfied by definition for $p \in \{1,2\}$. For  $p \geq 3$, and $r \geq 1$,  
\begin{align*} \sum_{j=1}^N \left( \frac{ a_j^{(N)}}{N} \right)^p 
& = \sum_{i=1}^{r} \left( \alpha_{i,N}^+\left(a^{(N)}\right)  \right)^p 
+  \sum_{i=1}^{r} \left(-  \alpha_{i,N}^-\left(a^{(N)}\right)  \right)^p  
\\ & + O \left( \sum_{j=1}^N  \left| \frac{ a_j^{(N)}}{N} \right|^p  \mathbf{1}_{ - \alpha_{r+1,N}^-\left(a^{(N)}\right) 
\leq   a_j^{(N)} /N \leq  \alpha_{r+1,N}^+\left(a^{(N)}\right) } \right),
\end{align*} 
which implies 
\begin{align*}
\sum_{j=1}^N \left( \frac{ a_j^{(N)}}{N} \right)^p 
& = \sum_{i=1}^{r} \left( \alpha_{i,N}^+\left(a^{(N)}\right)  \right)^p 
+  \sum_{i=1}^{r} \left(-  \alpha_{i,N}^-\left(a^{(N)}\right)  \right)^p  
\\ & + O \left( \left( \max\left( \alpha_{r+1,N}^+\left(a^{(N)}\right) ,  \alpha_{r+1,N}^-\left(a^{(N)}\right) \right)  \right)^{p-2} 
 \sum_{j=1}^N  \left( \frac{ a_j^{(N)}}{N} \right)^2  \right).
\end{align*} 
The upper and the lower limit of this quantity when $N$ goes to infinity can then both be written as 
$$\sum_{i=1}^{r} \left( \alpha_{i}^+ \right)^p 
+  \sum_{i=1}^{r} \left(-  \alpha_{i}^- \right)^p  
+ O \left(  \delta \left( \max( \alpha_{r+1}^+,  \alpha_{r+1}^-)  \right)^{p-2} \right).$$
Since 
$$\sum_{i=r+1}^{\infty} \left( \alpha_{i}^+ \right)^p 
+  \sum_{i=r+1}^\infty \left(  \alpha_{i}^- \right)^p  
= O \left(  \left( \max( \alpha_{r+1}^+,  \alpha_{r+1}^-)  \right)^{p-2} 
\left(\sum_{i}^{}(\alpha_i^+)^2 + \sum_{i}^{}(\alpha_i^-)^2 \right) \right),$$
we deduce that the upper and the lower limit of the sum of $(a_j^{(N)}/N)^p$ are both equal to 
$$\sum_{i=1}^{\infty} \left( \alpha_{i}^+ \right)^p 
+  \sum_{i=1}^{\infty} \left(-  \alpha_{i}^- \right)^p  
+ O \left(  \delta \left( \max( \alpha_{r+1}^+,  \alpha_{r+1}^-)  \right)^{p-2} \right).$$
Letting $r \rightarrow \infty$, we deduce 
\begin{equation} \sum_{j=1}^N \left( \frac{ a_j^{(N)}}{N} \right)^p  
\underset{N \rightarrow \infty}{\longrightarrow} \sum_{i}^{ } \left( \alpha_{i}^+ \right)^p 
+  \sum_{i}^{} \left(-  \alpha_{i}^- \right)^p \label{sump}
\end{equation} 
for all $p \geq 3$, the last sums being convergent. 
Conversely, if the sum of  $(a_j^{(N)}/N)^p$ converges for all $p \geq 1$, 
the existence of $\gamma$ and $\delta$ is automatically satisfied in the O-V conditions. 
For all polynomials $P$, we have 
$$ \Delta_{\lim} \left(  \sum_{j=1}^N  P \left(  \frac{ a_j^{(N)}}{N}\right)  \right) = 0,$$
where $\Delta_{\lim} $ denotes the difference between the upper and the lower limits when $N$ goes to infinity. 
We deduce that for any function $f$ from $\mathbb{R}$ to $\mathbb{R}$, 
$$ \Delta_{\lim} \left(  \sum_{j=1}^N  f  \left(  \frac{ a_j^{(N)}}{N}\right)  \right)  \leq 
|  \delta | \sup_{|x| \leq S}   \frac{|f(x) - P(x)|}{x^2},$$ 
where 
 $$ S = \sup_{N \geq 1} \max\left(  \alpha_{1,N}^-\left(a^{(N)}\right) ,  \alpha_{1,N}^+\left(a^{(N)}\right)  \right)
\leq  \left( \sup_{N \geq 1} \sum_{j=1}^N  \left( \frac{ a_j^{(N)}}{N} \right)^2  \right)^{1/2}
$$
is finite since the last sum is assumed to converge when $N$ goes to infinity. 
If $f$ is continuous  and equal to zero in a neighborhood of zero, we can uniformly approximate $f(x)/x^2$ by a polynomial $Q$ on the interval $[-S,S]$, and then the supremum of 
$|f(x) - P(x)|/x^2$ on this interval can be made arbitrarily small by taking $P(x) = x^2 Q(x)$. 
Hence, 
\begin{equation}
\sum_{j=1}^N  f  \left(  \frac{ a_j^{(N)}}{N}\right)   \label{sumf}
\end{equation} 
converges when $N$ goes to infinity, for all continuous functions $f$ equal to zero in a neighborhood of zero. 
 Now, let us assume that the upper and the lower limits of $\alpha_{i,N}^+ (a^{(N)})$ do not coincide for some index $i \geq 1$. For $c_1 < c_2$ strictly between the two limits (in particular $c_1$ and $c_2$ are positive), we can consider a continuous, nondecreasing function $f$ equal to zero on $(-\infty, c_1]$ and to $1$ on $[c_2, \infty)$. Then,
the sum   \eqref{sumf} should be larger than or equal to $i$ for infinitely many values of $N$, and smaller than or equal to $i-1$ for infinitely many values of $N$, which contradicts its convergence. 
Hence, $\alpha_{i,N}^+ (a^{(N)})$  is necessarily convergent when $N$ goes to infinity, and the same should occur 
for  $\alpha_{i,N}^- (a^{(N)})$. This proves the O-V conditions.

\end{proof} 

We are now ready to solve the case $\theta = \infty$ of the main theorem. 
In this case, the notion of consistent family of interlacing arrays does not involve any randomness: a family 
  $\{a^{(N)}\}_{N \geq 1}$ is consistent iff  for all $N \geq 1$,  $\{a^{(N-j)}\}_{0 \leq j \leq N-1}$ 
is given by the roots of the successive derivatives of some polynomial of degree $N$. 
The main result of this section is the following:  
\begin{prop} \label{thetainfinitybijection}
For $\theta = \infty$, all  consistent families  of interlacing arrays  satisfy the O-V conditions. Moreover, for all $\omega \in \Omega$, there exists exactly one  consistent family of interlacing arrays whose limit point is $\omega$. 
\end{prop}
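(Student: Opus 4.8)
The plan is to exploit the fact that, for $\theta=\infty$, a consistent family of interlacing arrays carries no randomness and amounts to a sequence of monic real-rooted polynomials linked by differentiation, which can in turn be packaged into a single entire function of Laguerre--P\'olya type; both assertions then follow by reading off this function. Concretely, let $P_N$ be the monic degree-$N$ polynomial whose root multiset is $a^{(N)}$; a family $\{a^{(N)}\}_{N\ge1}$ with $a^{(N)}\in W^N$ is consistent iff $P_{N-1}=\tfrac1N P_N'$ for all $N\ge2$ (all $P_N$ having only real roots). Writing $P_N(z)=z^N+\sum_{j=1}^N c_j^{(N)}z^{N-j}$ and matching coefficients in $P_{N-1}=\tfrac1N P_N'$ gives $c_j^{(N-1)}=\tfrac{N-j}{N}c_j^{(N)}$, hence, iterating down to $N=j$, $c_j^{(N)}=\binom{N}{j}c_j^{(j)}=\tfrac{N!}{(N-j)!}c_j$ with $c_j:=c_j^{(j)}/j!$. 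Thus a consistent family is encoded by the single real sequence $(c_j)_{j\ge1}$, equivalently by the formal power series $F(w):=1+\sum_{j\ge1}c_jw^j$, via $P_N=F(D)z^N$ with $D:=d/dz$. Since $c_j^{(N)}/N^j\to c_j$, the normalized elementary symmetric functions of the points of $a^{(N)}/N$ converge, hence by Newton's identities so do all the normalized power sums $\sum_i(a_i^{(N)}/N)^p$, $p\ge1$; Proposition~\ref{OVsumsofpowers} then yields that $\{a^{(N)}\}$ satisfies the O--V conditions, which is the first assertion.

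Next I would identify the O--V limit point $\omega$ of a consistent family in terms of $F$. With $\widetilde{F}_N(w):=w^N P_N(1/w)=\prod_i(1-a_i^{(N)}w)=1+\sum_{j=1}^N c_j^{(N)}w^j$, the standard identity $-w\,\widetilde{F}_N'(w)/\widetilde{F}_N(w)=\sum_{p\ge1}\big(\sum_i(a_i^{(N)})^p\big)w^p$, after the substitution $w\mapsto w/N$ and using $\widetilde{F}_N(w/N)\to F(w)$ coefficientwise (as $c_j^{(N)}/N^j\to c_j$), gives $\sum_{p\ge1}\big(\sum_i(a_i^{(N)}/N)^p\big)w^p\to -w\,(\log F)'(w)$ coefficientwise. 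Hence, writing $\log F(w)=\sum_{m\ge1}\ell_m w^m$, one has $\lim_N\sum_i(a_i^{(N)}/N)^m=-m\ell_m$ for all $m$; comparing this with \eqref{sump} and the cases $m=1,2$ of the O--V conditions forces $\log F(w)=-\gamma_1 w-\tfrac{\gamma_2}{2}w^2-\sum_{m\ge2}\tfrac{w^m}{m}\big(\sum_k(\alpha_k^+)^m+\sum_k(-\alpha_k^-)^m\big)$, i.e.\ $F$ is exactly the infinite product appearing in Theorem~\ref{main1}. In particular $F$, and hence the whole family, is determined by $\omega$, which proves uniqueness; conversely $\omega$ can be recovered from $(\ell_m)_{m\ge1}$ (the numbers $\alpha_k^{\pm}$ via a Hausdorff moment problem for the compactly supported measure $x^2\big(\sum_k\delta_{\alpha_k^+}+\sum_k\delta_{-\alpha_k^-}\big)$ on a bounded interval, then $\gamma_2$, then $\gamma_1$).

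For existence, given $\omega\in\Omega$ I would take $F$ to be the infinite product of Theorem~\ref{main1} and set $P_N:=F(D)z^N=z^N+\sum_{j=1}^N c_j\tfrac{N!}{(N-j)!}z^{N-j}$. It then suffices to prove that each $P_N$ has only real roots: granting this, $a^{(N)}$, the nonincreasing list of roots of $P_N$, lies in $W^N$, Rolle's theorem applied to $P_{N-1}=\tfrac1N P_N'$ gives $a^{(N-1)}\prec a^{(N)}$ (with multiplicities), so $\{a^{(N)}\}$ is a consistent family, and by the previous paragraph its limit point is $\omega$. To obtain real-rootedness, note that $F$ lies in the Laguerre--P\'olya class, so it is a locally uniform limit of real polynomials $F_m$ each a finite product of linear factors $1-w/r$, $r\in\mathbb R$ (use $e^{cw}=\lim_m(1+cw/m)^m$, $e^{-\gamma_2 w^2/2}=\lim_m(1-\tfrac{\gamma_2}{2m}w^2)^m$, and truncate the two infinite products over $k$); then $F_m(D)z^N\to P_N$ coefficientwise, so by Hurwitz's theorem it is enough that each $F_m(D)z^N$ be real-rooted, and this follows by iterating the elementary fact that $f\mapsto(1-aD)f=f-af'$ preserves real-rootedness for $a\in\mathbb R$ (for $f$ with distinct real roots $r_i$ one has $f-af'=f\cdot\big(1-a\sum_i(z-r_i)^{-1}\big)$, which has a zero in each gap between consecutive $r_i$ plus one more, far to the left or to the right according to the sign of $a$, hence $\deg f$ real zeros; the general case follows by continuity).

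The step I expect to be the main obstacle is this last one: showing that the infinite-order differential operator $F(D)$ attached to a Laguerre--P\'olya function maps $z^N$ to a polynomial with only real zeros. Everything else reduces to bookkeeping with formal power series, Newton's identities, and Proposition~\ref{OVsumsofpowers}.
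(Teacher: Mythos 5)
Your proposal is correct, and the bookkeeping reducing a consistent family to a single sequence $(c_j)_{j\ge1}$ (via $P_N=F(D)z^N$), together with the passage from elementary symmetric functions to power sums via Newton's identities and Proposition~\ref{OVsumsofpowers}, is essentially the same as the paper's argument for the first assertion and for uniqueness. The genuine divergence is in the \emph{existence} of a consistent family with prescribed limit point $\omega$. The paper does not introduce the entire function $F$ at all at this stage: it fabricates an explicit (and \emph{not} itself consistent) sequence $\tilde a^{(N)}$ of elements of $W^N$ with O--V limit point $\omega$, uses the fact that its coefficients $\tilde c^{(N)}_j$ converge to some $c_j$, and for each fixed $k$ observes that the degree-$k$ truncated polynomials with coefficients $\tilde c^{(N)}_j$ are $(N-k)$-th derivatives of the real-rooted polynomial attached to $\tilde a^{(N)}$, hence real-rooted; letting $N\to\infty$ at fixed $k$ and invoking continuity of roots gives real-rootedness of the limit polynomial for every $k$. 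You instead recognize $F$ as a Laguerre--P\'olya function (using $\gamma_2\ge0$, which holds by the definition of $\Omega$), approximate it locally uniformly by real polynomials that are products of linear factors, show that each approximant $F_m(D)z^N$ is real-rooted by iterating the elementary fact that $1-aD$ preserves real-rootedness for $a\in\mathbb R$, and finish with Hurwitz's theorem. Both routes are valid; yours is more structural and connects to the classical theory of multiplier sequences, while the paper's is more self-contained (no Laguerre--P\'olya input) and reuses the differentiation structure that is already central to the $\theta=\infty$ model. One small point worth making explicit in your writeup is that the step $F_m\to F$ locally uniformly $\Rightarrow F_m(D)z^N\to F(D)z^N$ coefficientwise uses Cauchy's estimates on Taylor coefficients, and that $F_m(D)z^N$ remains monic of degree $N$ because $F_m(0)=1$, so Hurwitz applies at fixed degree.
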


\begin{proof}
If $\{a^{(N)}\}_{N \geq 1}$ is consistent, there exists a sequence $(c_j)_{j \geq 1}$ such that for all $N \geq 1$, 
$a^{(N)}$ is given by the roots of the polynomial 
\begin{equation}
z \mapsto \frac{z^N}{N!} + \sum_{j=1}^{N} c_j \, \frac{z^{N-j}}{(N-j)!}. \label{polynomialthetainfty}
\end{equation} 
We deduce that almost surely, each elementary symmetric function of the $N$ points of $a^{(N)}/N$ converges to a limit 
when $N$ goes to infinity. Hence, by Newton's identities, the symmetric functions given by the sums of $p$-th power also converge for all $p \geq 1$, which implies  the O-V conditions by the previous proposition.  
Knowing the limit point of $\{a^{(N)}\}_{N \geq 1}$ implies that we know the limit of the sum of  $(a^{(N)}_j/N)^p$ for all $p \geq 1$: 
\begin{itemize}
\item For $p \in \{1,2\}$, this comes from the definitions. 
\item For $p \geq 3$, this comes from \eqref{sump}.  
\end{itemize}
Hence, we know the limit of each elementary symmetric function of  $a^{(N)}/N$, which determines the coefficients $c_j$, and then the polynomials \eqref{polynomialthetainfty}, and then their roots $\{a^{(N)}\}_{N \geq 1}$.
On the other hand, if $\omega \in \Omega$, one can construct  a sequence $\{\tilde{a}^{(N)}\}_{N \geq 1}$ satisfying the O-V conditions with limit point $\omega$, for example by taking $N \alpha^+_i$ and $N \alpha^-_i$ for $i \leq N^{1/10}$, and each of the two  values $\mu_N - \sqrt{\gamma_2 N}$ and  $\mu_N +  \sqrt{\gamma_2 N}$ a number of times equal to $N/2 + O(N^{1/10})$, for 
$$\mu_N =   \gamma_1 -  \sum_{1 \leq i \leq N^{1/10}}  \alpha^+_i -   \sum_{1 \leq i \leq N^{1/10}}  \alpha^-_i = O(N^{1/10}).$$
We know that  for all $p \geq 1$, the sum of the $p$-th powers of the points in $\tilde{a}^{(N)}/N$ converges to a limit.
Hence, the elementary symmetric functions also converge.
Now, for all $N \geq 1$, the points in $\tilde{a}^{(N)}$ are the roots of some polynomial 
$$ z \mapsto \frac{z^N}{N!} + \sum_{j=1}^{N} \tilde{c}^{(N)}_j \, \frac{z^{N-j}}{(N-j)!},$$
and the convergence of the symmetric functions implies that  $\tilde{c}^{(N)}_j$ tends to a limit $c_j$ when $N \rightarrow \infty$. 
 For $N \geq k \geq 1$, 
the polynomial 
$$ z \mapsto \frac{z^k}{k!} + \sum_{j=1}^{k} \tilde{c}^{(N)}_j \, \frac{z^{k-j}}{(k-j)!}$$
is the $(N-k)$-th derivative of a polynomial whose roots $(\tilde{a}^{(N)}_j)_{1 \leq j \leq N}$ are real: hence, it has real roots. 
Taking the limit of the coefficients when $N$ goes to infinity, we deduce by continuity that 
$$ z \mapsto \frac{z^k}{k!} + \sum_{j=1}^{k} c_j \, \frac{z^{k-j}}{(k-j)!}$$
also has real roots for all $k \geq 1$. Taking the roots of this polynomial for all $k \geq 1$ defines a  new family of interlacing arrays $\{a^{(N)}\}_{N \geq 1}$, for which the renormalized symmetric functions converge to the same limits as for $\{\tilde{a}^{(N)}\}_{N \geq 1}$, since  $\tilde{c}^{(N)}_j$ and $c_j$ have the same limit $c_j$ when $N$ goes to infinity. 
Since  $\{\tilde{a}^{(N)}\}_{N \geq 1}$ has limit point $\omega$, the limits of the sum of the $p$-th powers of the points in $\{a^{(N)}/N\}_{N \geq 1}$  and in  $\{\tilde{a}^{(N)}/N\}_{N \geq 1}$ are both given by $\gamma_1$ for $p =1$, $\delta$ for $p = 2$, and \eqref{sump} for $p \geq 3$. 
The sequence given by  $a^{(N)}/N$ for $N$ odd and  $\tilde{a}^{(N)}/N$ for $N$ even should then satisfy the O-V conditions, which is only possible if $\{a^{(N)}/N\}_{N \geq 1}$  and  $\{\tilde{a}^{(N)}/N\}_{N \geq 1}$ have the same limit point, necessarily equal to $\omega$. 
Hence, $\{a^{(N)}/N\}_{N \geq 1}$ is a consistent family of interlacing arrays with limit point $\omega$. 
\end{proof} 

The proposition we have just proven now implies the main theorem for $\theta = \infty$. Indeed, for this parameter, the deterministic 
consistent families of interlacing arrays are in bijection with $\Omega$, the bijection being given by the limit point. 
Since the notion of consistent families does not involve randomness when $\theta = \infty$, the random consistent families of interlacing arrays 
are in bijection with the random variables with values in $\Omega$. It is easy to deduce the classification of Theorems \ref{main1} and \ref{main2}, where the extremal measure $\mathsf{M}^{\infty}_{\omega}$ is the Dirac measure at the deterministic consistent family whose limit point is $\omega$, and the measure $\mathsf{M}^{\infty}_{\nu}$ is the law of a random consistent family whose limit point follows the 
distribution $\nu$. 
Moreover, if $\{a^{(N)}\}_{N \geq 1}$ is the family corresponding to $\omega$, and if $a^{(N)}$ is given by the roots of 
$$z \mapsto \frac{z^N}{N!} + \sum_{j=1}^N c_j \frac{z^{N-j}}{(N-j)!},$$
then the sum of the points in $a^{(N)}$ is equal to $-Nc_1$, and then $c_1 = - \gamma_1$. We deduce that all the diagonal entries are equal to $-c_1 = \gamma_1$.
It remains to check the formula giving the coefficients $c_j$ in function of $\omega$. 
We have that $(-1)^j c_j$ is the limit of the $j$-th elementary symmetric function of $1/N$ times the points of the $N$-th row, when $N \rightarrow \infty$. 
Now, if $(e_j)_{j \geq 1}$ denotes the elementary symmetric functions of $\lambda_1, \dots, \lambda_r$, we have, by expanding the logarithm, the equality of formal series: 
$$ 1 + \sum_{j=1}^{\infty} (-1)^j e_j z^j = \prod_{q=1}^r (1 - \lambda_q z) 
= \exp \left( - \sum_{k=1}^{\infty} p_k \frac{z^k}{k} \right)$$
where 
$$p_k = \sum_{q=1}^r \lambda_q^k. $$
This identity gives Newton's polynomial relations between the elementary symmetric functions and the sums of successive powers. These relations pass to the limit, so we have the equality of formal series: 
$$1 + \sum_{j=1}^{\infty} c_j z^j  =  \exp \left( - \sum_{k=1}^{\infty} s_k \frac{z^k}{k} \right),$$
where $s_k$ is the limit, when $N$ goes to infinity, of the sum of the $k$-th powers of $1/N$ times the points of the $N$-th row. 
Since the points satisfy the O-V conditions, we have 
$$s_1 = \gamma_1, \, s_2  = \gamma_2 + \sum_{i \geq 1} (\alpha_i^{+})^2 +  \sum_{i \geq 1} (\alpha_i^{-})^2,$$
$$s_k  = \sum_{i \geq 1} (\alpha_i^{+})^k + (-1)^k \sum_{i \geq 1} (\alpha_i^{-})^k$$
for $k \geq 3$. 
We deduce the equalities of formal series: 
$$\log \left( 1 + \sum_{j=1}^{\infty} c_j z^j  \right) 
= - \gamma_1 z - \frac{\gamma_2}{2} z^2 
- \sum_{i \geq 1} \sum_{k=2}^{\infty} \frac{ (z \alpha^+_i)^k + (-z \alpha^-_i)^k}{k},$$
$$\log \left( 1 + \sum_{j=1}^{\infty} c_j z^j  \right)  = - \gamma_1 z - \frac{\gamma_2}{2} z^2 
+ \sum_{i \geq 1}  \left( \log (1 - z \alpha^+_i) + z \alpha^+_i) + \log (1 + z \alpha^-_i) - z \alpha^-_i \right).$$
Taking the exponential completes the proof of Theorem \ref{main1}.

\begin{ex}
If $\gamma_1 = 0$, $\gamma_2 > 0$, and all the $\alpha^+_i$ and $\alpha^-_i$ vanish, then by expanding $e^{-\gamma_2 z^2/2}$, we get $c_{2j} = (-\gamma_2)^j/(2^j j!)$. 
The polynomial corresponding the the $N$-th row is then
$$z \mapsto z^N +  \sum_{1 \leq j \leq N/2} (-\gamma_2)^j \frac{N!}{2^j j! (N-2j)!} z^{N-2j} = \gamma_2^{N/2} H_N(z/\sqrt{\gamma_2}),$$
where $$H_N(z)  = \left( z - \frac{d}{dz} \right)^N (1)$$
is the $N$-th Hermite polynomial. The points of the $N$-th row are then the zeros of the $N$-th Hermite polynomial, multiplied by $\sqrt{\gamma_2}$. From the classification of Theorem \ref{main1}, one can deduce that the shifted and rescaled Hermite polynomials are the only sequences of polynomials which are proportional to successive derivatives of each other, and whose extreme zeros at the $N$-th level are distinct and $o(N)$ when $N$ goes to infinity. 
If  $\gamma_2 = 0$, if a single value of $\alpha^+_i$ or $- \alpha^-_i$ is nonzero, and equal to $\alpha$,  and if $\gamma_1 = \alpha$ then, we have 
to consider $1-\alpha z$, which gives $c_1 = - \alpha$ and $c_j = 0$ for $j \geq 2$. The $N$-th polynomial is then $z^N - N \alpha z^{N-1}$, and at the $N$-th row, we have $N-1$ times zero and one time $N \alpha$. Changing the value of $\gamma_1$ shifts all the points by $\gamma_1 - \alpha$. 
\end{ex}

\section{The main steps of the proof for $\theta < \infty$} \label{sectionmainsteps}

Theorem \ref{main1} for finite $\theta$ will be deduced from the following propositions, proven one by one in the next sections.

\begin{prop}\label{ExtremalityConvergenceOfOrbital}

Let   $\{ a^{(i)}\}_{i \geq 1}$ be a random family of interlacing arrays, which is assumed to follow an extremal consistent distribution with parameter $\theta \in (0, \infty)$. 
For $N \geq K \geq 1$, we consider the restriction to the $K$ first rows of the orbital distribution of top row $a^{(N)}$ and parameter $\theta$. 
Then, this random probability measure on $W^1 \times W^2 \times \dots \times W^K$ almost surely converges in law, as $N$ goes to infinity, to the distribution of 
$\{ a^{(i)}\}_{1 \leq i \leq K}$. 
\end{prop}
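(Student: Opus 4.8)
The plan is to recognise the random probability measure in the statement as a regular conditional law and then to apply the reverse (backward) martingale convergence theorem together with the triviality of the tail $\sigma$-algebra of an extremal distribution. Write $\mathcal{F}_{\ge N}:=\sigma\big(a^{(i)}:i\ge N\big)$, so that $(\mathcal{F}_{\ge N})_{N\ge 1}$ is a decreasing family of $\sigma$-algebras with $\mathcal{F}_\infty:=\bigcap_{N\ge 1}\mathcal{F}_{\ge N}$, and for $b\in W^N$ let $\pi_{N,K}(b,\cdot)$ denote the restriction to the first $K$ rows of the orbital distribution of top row $b$ and parameter $\theta$, i.e. the image of $\delta_b\,\Lambda^\theta_{N,N-1}\cdots\Lambda^\theta_{2,1}$ on $W^1\times\cdots\times W^K$; this is well defined and measurable in $b$ since each $\Lambda^\theta_{k+1,k}(\cdot,\cdot)$ is defined, and continuous in its first argument, everywhere. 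Using the Markov property \eqref{Markovcondition} and the tower rule, one checks by induction on $N-K$ that for every bounded measurable $g$ on $W^1\times\cdots\times W^K$,
\begin{equation*}
\mathbb{E}\big[\,g\big(a^{(1)},\ldots,a^{(K)}\big)\ \big|\ \mathcal{F}_{\ge N}\,\big]=\int g\ d\pi_{N,K}\big(a^{(N)},\cdot\big)\qquad\text{a.s.},
\end{equation*}
so that the random measure of the statement is precisely a regular conditional law of $\{a^{(i)}\}_{1\le i\le K}$ given $\mathcal{F}_{\ge N}$.

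Now fix a bounded continuous $f$ on $W^1\times\cdots\times W^K$ and set $M_N:=\int f\,d\pi_{N,K}(a^{(N)},\cdot)=\mathbb{E}\big[f(a^{(1)},\ldots,a^{(K)})\mid\mathcal{F}_{\ge N}\big]$. As the filtration is decreasing, $(M_N)_{N\ge 1}$ is a reverse martingale, so by the reverse martingale convergence theorem $M_N\to\mathbb{E}\big[f(a^{(1)},\ldots,a^{(K)})\mid\mathcal{F}_\infty\big]$ almost surely. The key step is that $\mathcal{F}_\infty$ is $\mathbb{P}$-trivial when the consistent distribution is extremal: if $A\in\mathcal{F}_\infty$ satisfied $0<\mathbb{P}(A)<1$, then both $\mathbb{P}(\cdot\mid A)$ and $\mathbb{P}(\cdot\mid A^c)$ would again be consistent distributions, since for $E$ in the $\sigma$-algebra generated by the rows of index $\ge k+1$ and $X\subseteq W^k$ Borel, the event $E\cap A$ also lies in that $\sigma$-algebra (as $A\in\mathcal{F}_\infty\subseteq\mathcal{F}_{\ge k+1}$), whence applying \eqref{Markovcondition} to $E\cap A$ and dividing by $\mathbb{P}(A)$ shows that \eqref{Markovcondition} persists under conditioning on $A$, and likewise on $A^c$. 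This would contradict extremality, so $\mathcal{F}_\infty$ is trivial, $\mathbb{E}[f(a^{(1)},\ldots,a^{(K)})\mid\mathcal{F}_\infty]=\mathbb{E}[f(a^{(1)},\ldots,a^{(K)})]$ almost surely, and hence $M_N\to\mathbb{E}[f(a^{(1)},\ldots,a^{(K)})]$ almost surely, for each fixed $f$.

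Finally, since $W^1\times\cdots\times W^K$ is Polish, weak convergence of probability measures on it is characterised by a fixed countable family $\{f_m\}_{m\ge 1}$ of bounded continuous functions; intersecting the countably many almost sure events on which $\pi_{N,K}(a^{(N)},f_m)\to\mathbb{E}[f_m(a^{(1)},\ldots,a^{(K)})]$ yields that, almost surely, $\pi_{N,K}(a^{(N)},\cdot)$ converges weakly to the law of $\{a^{(i)}\}_{1\le i\le K}$, which is the assertion. I expect the bulk of the work to lie in the two bookkeeping steps: the iterated use of \eqref{Markovcondition} through the intermediate rows $K+1,\ldots,N-1$ needed to identify $\pi_{N,K}(a^{(N)},\cdot)$ with the conditional law given $\mathcal{F}_{\ge N}$ (handling regular conditional distributions carefully), and the verification that conditioning on a tail event preserves consistency, so that extremality forces $\mathcal{F}_\infty$ to be trivial. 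The reverse-martingale convergence and the passage to weak convergence of the random measures are then routine.
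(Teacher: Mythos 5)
Your proof is correct and follows essentially the same route as the paper: you identify the random orbital measure as the regular conditional law given the tail filtration $\mathcal{F}_{\ge N}$, invoke reverse-martingale convergence, and use the triviality of $\mathcal{F}_\infty$ under an extremal measure (proved by observing that conditioning on a tail event preserves consistency and would otherwise decompose the measure). The only cosmetic difference is that you spell out the induction identifying $\pi_{N,K}(a^{(N)},\cdot)$ with the conditional law, which the paper passes over more quickly.
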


\begin{prop} \label{necessity}
Let $\{a^{(i)} \}_{i \ge 1}$ be a sequence such that $a^{(i)} \in W^i$ for all $i \geq 1$, and let $\theta \in (0, \infty)$. 
For $N \geq K \geq 1$, we consider the restriction to the $K$ first rows of the orbital distribution of top row $a^{(N)}$ and parameter $\theta$.
We assume that for all $K \geq 1$,  this probability measure  converges in law to a limiting distribution when $N$ goes to infinity.  Then, 
the sequence $\{a^{(i)} \}_{i \ge 1}$ satisfies the O-V conditions.
\end{prop}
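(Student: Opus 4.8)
The plan is to exploit the characterization of the O-V conditions in terms of convergence of power sums, established in Proposition \ref{OVsumsofpowers}. Concretely, for each integer $p \geq 1$ I want to show that $\sum_{j=1}^N (a_j^{(N)}/N)^p$ converges as $N \to \infty$, using only the hypothesis that for each fixed $K$ the $K$-row restriction of the orbital distribution with top row $a^{(N)}$ converges in law. The key observation is that the orbital process relates the top row to lower rows through the Dixon-Anderson kernels, and these kernels have a clean first-moment structure: if $b \in W^{N+1}$ is fixed and $a$ is distributed as $\Lambda^\theta_{N+1,N}(b,\cdot)$, then (using the polynomial/Dirichlet description in the Definition, or equivalently \eqref{DixonAndersonExplicit}) the expected values of the power sums $\sum_i a_i^k$ are explicit polynomial functions of the power sums $\sum_j b_j^\ell$ for $\ell \leq k$. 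In particular, the ``diagonal entry'' $d_{N+1} = \sum_j a_j^{(N+1)} - \sum_i a_i^{(N)}$ has, conditionally on $a^{(N+1)}$, an expectation that depends only on $a^{(N+1)}$ in a controlled way, and iterating this down a fixed number of levels expresses expectations of low-degree symmetric functions of $a^{(K)}$ in terms of those of $a^{(N)}$.

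The main steps, in order, are as follows. First, I would record the moment identities for a single Dixon-Anderson step: for $a \sim \Lambda^\theta_{N+1,N}(b,\cdot)$, compute $\mathbb{E}[\sum_i a_i]$, $\mathbb{E}[\sum_i a_i^2]$, and more generally $\mathbb{E}[e_k(a)]$ or $\mathbb{E}[p_k(a)]$, as functions of $b$; these follow from the fact that the monic polynomial with roots $a$ is $\sum_j \alpha_j \prod_{k\neq j}(z-b_k)$ divided by its leading coefficient $\sum_j \alpha_j \cdot (\text{stuff})$ — wait, more cleanly, the coefficients of that polynomial are explicit linear combinations of the $\alpha_j$ and elementary symmetric functions of subsets of the $b_k$, and $\mathbb{E}[\alpha_j] = 1/(N+1)$, so $\mathbb{E}[e_k(a)]$ is a known affine-linear expression in $e_1(b),\dots,e_k(b)$. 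Second, I would iterate: composing from level $N$ down to level $K$, I obtain that $\mathbb{E}[\text{(symmetric function of degree} \le p \text{ of } a^{(K)})]$, under the orbital distribution with top row $a^{(N)}$, equals an explicit expression built from the power sums of $a^{(N)}$ (after dividing by appropriate powers of $N$, the lower-order terms will be negligible, so the renormalized power sum of $a^{(N)}$ will appear as the leading contribution). Third, I invoke the hypothesis: convergence in law of the $K$-row restriction, once I also check uniform integrability of the relevant symmetric functions (which can be obtained from boundedness of the normalized second power sum — itself controlled because $\sum(a^{(K)}_i/K)^2$ is, up to bounded error, dominated by $\sum(a^{(N)}_i/N)^2$ times a constant, so if one of the target power sums were unbounded along a subsequence the first-row or $K$-th-row limit laws would fail to exist), gives convergence of the corresponding moments, hence convergence of the renormalized power sums of $a^{(N)}$. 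By Proposition \ref{OVsumsofpowers}, this is exactly the O-V conditions.

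I expect the main obstacle to be the passage from convergence in law of the finite-row restrictions to convergence of the relevant moments, i.e. establishing the uniform integrability needed to upgrade weak convergence to moment convergence, and simultaneously ruling out the degenerate scenario where some renormalized power sum of $a^{(N)}$ diverges while the low-level marginals still converge (a priori mass could ``escape to infinity'' in a way invisible at finite levels). The way around this is a two-sided estimate: on one hand, a divergent $\sum(a^{(N)}_i/N)^2$ would force the variance — or some tail functional — of the diagonal entry $d_1 = a^{(1)}$ to blow up, contradicting tightness of the $K=1$ marginal; on the other hand, once boundedness of the second power sum is in hand, the higher power sums are controlled by the extreme-value bounds $|a_j^{(N)}/N| \le (\sum_i (a_i^{(N)}/N)^2)^{1/2}$ exactly as in the proof of Proposition \ref{OVsumsofpowers}, giving the uniform integrability. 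A secondary technical point is that $\Lambda^\theta_{N+1,N}(b,\cdot)$ need not have all moments finite for small $\theta$ (the density $|a_i-b_j|^{\theta-1}$ is only mildly integrable), so I would work with the diagonal-entry representation — or with bounded test functions and a truncation argument in the spirit of the proof of Proposition \ref{OVsumsofpowers} — rather than naively taking expectations of unbounded symmetric functions; this is routine but must be handled with care.
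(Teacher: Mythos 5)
Your proposal takes a genuinely different route from the paper, and unfortunately it has a real gap at exactly the point you flag as the main obstacle. The paper works entirely with $K=1$: using Proposition~\ref{orbitalDirichlet}, the bottom entry is $a^{(1)}=\sum_j\alpha_j^{(N)}a_j^{(N)}$ with $\alpha^{(N)}$ Dirichlet, so after the gamma representation and Slutsky its characteristic function is approximately $\prod_j(1-\i\lambda a_j^{(N)}/N)^{-\theta}$. The crucial boundedness of $\sum_j(a_j^{(N)}/N)^2$ then comes from the inequality $|\varphi_N(\lambda)|^2\leq\left(1+\lambda^2\sum_j(a_j^{(N)}/N)^2\right)^{-\theta}$: if the sum diverged, $\varphi_N(\lambda)\to 0$ for every $\lambda\neq 0$, contradicting continuity (hence tightness) of the limit. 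Your substitute argument --- that divergence of $\sum(a_j^{(N)}/N)^2$ would make the variance of $a^{(1)}$ blow up, contradicting tightness --- does not work: tightness of a sequence of random variables does \emph{not} bound its variance (mass can escape to infinity with vanishing probability while inflating the second moment arbitrarily; take $X_n=n$ with probability $1/n$, else $0$). One can check that $\mathrm{Var}(a^{(1)})\sim S_2^{(N)}/\theta$ under the Dirichlet structure, so your heuristic does point to the right quantity, but the implication you want needs the multiplicative, exponent-$\theta$ form of the characteristic function, not a generic variance/tail argument.

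Two further remarks. First, the iteration from level $N$ down to level $K$ that you propose is unnecessary and would be painful to carry out (the $\Lambda^\theta$ densities have singular factors $|a_i-b_j|^{\theta-1}$, so moment computations for general $\theta$ are delicate, as you yourself note); the paper extracts everything from $K=1$ alone. Second, and less visible from your sketch: even after boundedness of $\sum(a_j^{(N)}/N)^2$ is established, turning convergence of the characteristic function into convergence of each $S_p^{(N)}$ is not a formality. The paper takes logarithms, expands in small $\lambda$, and uses a modular-arithmetic argument (Lemma~\ref{modulo} plus a rescaling $\lambda=\lambda_0/q$) to peel off the $S_p^{(N)}$ one at a time; a purely moment-based route would have to replicate this bootstrapping in a different guise, via the triangular relation between Dirichlet moments and the power sums of the top row. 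Your proposal identifies the right conclusion (Proposition~\ref{OVsumsofpowers}) and the right obstacle, but the proposed fix for that obstacle is invalid as stated.
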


\begin{prop} \label{sufficiency}
Let $\{a^{(i)} \}_{i \ge 1}$ be a sequence such that $a^{(i)} \in W^i$ for all $i \geq 1$. We assume that the O-V conditions are satisfied, for a limit point $\omega \in \Omega$. 
Then, the assumptions of Proposition \ref{necessity} are satisfied, and there exists an extremal consistent distribution $\mathsf{M}^{\theta}_{\omega}$ on the 
infinite families of interlacing arrays, depending only on $\theta$ and $\omega$, such that the limiting distribution involved in the statement of Proposition \ref{necessity} is the restriction of $\mathsf{M}^{\theta}_{\omega}$ to the  $K$ first rows, for all $K \geq 1$. 
Moreover, under $\mathsf{M}^{\theta}_{\omega}$, the diagonal entries 
 are i.i.d. with characteristic function $\mathfrak{F}_{\omega,\theta}$.
\end{prop}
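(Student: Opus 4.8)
The plan is to prove Proposition \ref{sufficiency} by combining the convergence-of-orbital-distributions machinery of Section \ref{ConvergenceOfOrbital} with a concrete computation of the limiting Dunkl (multivariate Bessel) transform of the orbital distributions, which under the O-V conditions factorizes into a product of the functions $\mathfrak{F}_{\omega,\theta}$. The key analytic fact is that the Dunkl transform of the orbital distribution of top row $a^{(N)}$ on the $K$-th row can be written explicitly in terms of the multivariate Bessel function $\mathcal{B}_\theta$ evaluated at $a^{(N)}/N$: concretely, for the bottom row ($K=1$) the characteristic function of $a_1^{(1)}$ under the orbital distribution of top row $a^{(N)}$ is $\mathcal{B}_\theta\!\left(a^{(N)}; (\i x/N, 0, \dots, 0)\right)$ up to normalisation, and more generally the transform on $W^K$ is a ratio of such Bessel functions. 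The asymptotics of the multivariate Bessel function as the number of variables tends to infinity — an LLN-type statement for the positions $a^{(N)}/N$ — then produces exactly the exponential-times-infinite-product shape of $\mathfrak{F}_{\omega,\theta}$: each nonzero limit $\alpha_k^\pm$ contributes a factor $e^{\mp \i \alpha_k^\pm x}(1 \mp \i \alpha_k^\pm x/\theta)^{-\theta}$, the linear term $\gamma_1$ comes from the first moment $\gamma_1^{(N)}$, and the genuinely Gaussian factor $e^{-\gamma_2 x^2/(2\theta)}$ emerges from the difference $\delta - \sum (\alpha_k^\pm)^2$, i.e. from the "bulk" of small eigenvalues whose squares sum to a positive limit but which individually vanish.

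The steps, in order, would be: (i) Record the exact formula for the Dunkl transform of the orbital distribution of top row $\mathsf{a}(N)$ restricted to row $K$, as a normalized multivariate Bessel function, using the Dixon-Anderson kernels and the branching rule for Bessel functions under $\Lambda_{N+1,N}^\theta$; this is where the first form of the definition of $\Lambda^\theta$ is convenient. (ii) Prove the single-variable asymptotic: under the O-V conditions with limit point $\omega$, $\mathcal{B}_\theta(a^{(N)}/N; \i x)$ converges to $\mathfrak{F}_{\omega,\theta}(x)$ uniformly on compacts, by splitting the product over coordinates into the finitely many "large" coordinates (which converge individually to the $\alpha_k^\pm$-factors), and the remaining "small" coordinates, for which one Taylor-expands $\log\big[ e^{-\i\alpha x/\theta}(1-\i\alpha x/\theta)^{-\theta}\big] = -\frac{x^2}{2\theta}\alpha^2 + O(\alpha^3 x^3)$ and uses that $\sum_{\text{small}} \alpha^2 \to \gamma_2$ while $\max_{\text{small}} |\alpha| \to 0$, so the cubic error vanishes — this is the standard Lindeberg-type argument, mirroring Proposition \ref{OVsumsofpowers}. (iii) Extend this to all rows $K$ and deduce convergence of the finite-dimensional distributions via the inversion theorem for the Dunkl transform and the fact that the product structure of the limiting transforms corresponds exactly to the diagonal entries being i.i.d. with characteristic function $\mathfrak{F}_{\omega,\theta}$. (iv) Identify the limit as a consistent distribution (it is a weak limit of orbital ones, each of which is consistent on its first $K$ rows, and consistency passes to the limit by continuity of $\Lambda^\theta$ established after display (\ref{DixonAndersonExplicit})), and as extremal, since its dependence on $\omega$ alone — together with the forthcoming disintegration of Section \ref{sectiondisintegration} — pins it down; alternatively extremality follows because the finite-dimensional laws are determined by the deterministic data $\omega$ with no residual randomness to decompose. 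Together with Proposition \ref{necessity} this also gives that the O-V assumptions indeed imply the hypotheses of that proposition.

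The main obstacle I expect is step (ii) made fully rigorous and uniform: controlling the infinite product over the small coordinates requires a genuinely quantitative Lindeberg estimate, keeping track of the branch of the complex power $(1-\i\alpha x/\theta)^{-\theta}$ (legitimate precisely on the holomorphic strip $|\Im x| < \theta/\max\{\alpha_1^+,\alpha_1^-\}$ noted after the definition of $\mathfrak{F}_{\omega,\theta}$), and upgrading pointwise convergence of characteristic functions to the weak convergence of the full array, which for unbounded state spaces needs tightness. Tightness should follow from the uniform control of second moments furnished by the O-V conditions (the quantity $\delta^{(N)}$ is bounded), exactly as in the proof of Proposition \ref{OVsumsofpowers} where $S = \sup_N \max(\alpha_{1,N}^\pm)$ was shown finite. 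A secondary technical point is to confirm that the multivariate Bessel branching identity under $\Lambda^\theta_{N+1,N}$ holds in the present generality (coinciding top-row coordinates allowed), which is handled by the continuity of $\Lambda^\theta_{N+1,N}(b,\cdot)$ in $b$ already recorded in the excerpt, so one first works with distinct coordinates and passes to the limit.
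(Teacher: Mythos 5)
Your outline reproduces the right global architecture (Dunkl/Bessel transforms, an asymptotic for them under the O--V conditions, identification of the limit with $\mathfrak{F}_{\omega,\theta}$), and for $K=1$ your Lindeberg-type approach is a legitimate alternative to the paper's argument: via Proposition~\ref{orbitalDirichlet} and the beta-gamma algebra, the characteristic function of the un-normalized $\Gamma$-weighted sum really is the scalar product $\prod_j (1-\i x\, a^{(N)}_j/(N\theta))^{-\theta}$, and splitting this product into large and small coordinates gives the factors $e^{\mp\i\alpha^\pm_k x}(1\mp\i\alpha^\pm_k x/\theta)^{-\theta}$ and a Gaussian piece from the bulk. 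The paper instead computes the cumulants $\kappa_p = \theta^{-p}S^{(N)}_p\kappa_p(\Gamma^1_{\theta,1})$ term by term, matches them to a random variable with characteristic function $\mathfrak{F}_{\omega,\theta}$, and then uses exponential-moment bounds and Montel's theorem to get uniform convergence on a complex strip. Both routes work for $K=1$; the cumulant route has the advantage of feeding directly into the Montel upgrade you need later.

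There are, however, two genuine gaps.

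First, the passage from $K=1$ to general $K$ is not an ``extension'' of the scalar argument; it is the hardest part of the proof, and your proposal has no mechanism for it. For $K\ge 2$ the multivariate Bessel function $\mathfrak{B}_{a^{(N)}}(y_1,\dots,y_K;\theta)$ does \emph{not} factor over the coordinates of $a^{(N)}$, nor over the $y_j$'s, at finite $N$; the product structure $\prod_j\mathfrak{F}_{\omega,\theta}(-\i y_j)$ only emerges in the limit, and proving that factorization is precisely the content of Proposition~\ref{EquivalentMultidimensionalConvergenceBesselProposition}. The paper's actual argument proceeds by induction on $K$ using the contour integral identity of Theorem~\ref{ContourIntegralMultiDimensions} (from Cuenca's work, expressing a product of two Bessel functions as an integral of a single one), with a delicate split of the integration domain at scale $N^{-u}$, $1/2<u<1$, controlled by Lemmas~\ref{AuxiliaryLemma1} and~\ref{AuxiliaryLemma2}. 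Your proposed ``branching rule'' and ``ratio of Bessel functions'' do not by themselves furnish such a reduction; a Lindeberg-type product-splitting has no analogue in several variables, because there is nothing to split.

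Second, your proposed extremality argument fails in both of its forms. Invoking ``the forthcoming disintegration of Section~\ref{sectiondisintegration}'' is circular: Propositions~\ref{disintegration1} and~\ref{disintegration2} rest on Theorem~\ref{main1}, which in turn is assembled from Proposition~\ref{sufficiency} itself. And ``the finite-dimensional laws are determined by the deterministic data $\omega$ with no residual randomness to decompose'' is simply not a criterion for extremality: every consistent distribution is determined by some data, yet most are non-extremal. The paper's actual argument is a genuine idea you are missing: the joint law of the diagonal entries is exchangeable (by symmetry of the Bessel function and Proposition~\ref{OrbitalAndBesselProp}), the map from consistent distributions to exchangeable laws is injective and convex-combination-preserving, and de Finetti's theorem sends i.i.d. laws to Dirac masses on the space of one-dimensional distributions; since under the limit the diagonal entries are i.i.d. with characteristic function $\mathfrak{F}_{\omega,\theta}$, the image is a Dirac mass, hence extremal, hence so is $\mathsf{M}^\theta_\omega$. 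Without this (or an equivalent argument), the extremality claim in the proposition is not established.
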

 
 It is easy to prove Theorem \ref{main1} by combining the propositions above. Indeed,  Proposition \ref{ExtremalityConvergenceOfOrbital}  implies in particular that extremal consistent measures are limits of at least one family of orbital measures, in the sense of convergence of finite-dimensional 
marginals. Proposition \ref{necessity} then implies that the top rows of such family of orbital measures should satisfy the O-V condition. Combining this with Proposition \ref{sufficiency} and the fact that distributions on infinite families of interlacing arrays are uniquely determined by their finite-dimensional marginals, we deduce that all extremal consistent measures are of the form $\mathsf{M}^{\theta}_{\omega}$ for some $\omega \in \Omega$. 
Conversely, as we have seen during the proof of Proposition \ref{thetainfinitybijection}, we can construct a sequence satisfying the O-V conditions for any limit point, which shows, from Proposition \ref{sufficiency}, that $\mathsf{M}^{\theta}_{\omega}$ is well-defined and extremal for all $\omega \in \Omega$. The distribution of the diagonal entries is given in  Proposition \ref{sufficiency}, and it
implies that $\omega \mapsto \mathsf{M}^{\theta}_{\omega}$ is injective, and then from the facts just above, it is a bijection between $\Omega$ and  the set of extremal consistent measures 
on random families of interlacing arrays. This completes the proof of Theorem \ref{main1}. 

Theorem \ref{main2} for finite $\theta$ can be deduced from the following propositions: 
\begin{prop} \label{disintegration1}
Fix $\theta \in (0, \infty)$. Then, there exists a measurable space $W^{\infty}$, such that the following holds: 
\begin{itemize}
\item The extremal consistent distribution on the infinite families of interlacing arrays, for the parameter $\theta$,  are in bijection with $W^{\infty}$. 
\item For any consistent distribution $\mathsf{M}$ on the infinite families of interlacing arrays, for the parameter $\theta$, there exists a probability measure $\nu$ on $W^{\infty}$ such that 
for all events $E$, 
$$\mathsf{M} [ E]  = \int_{W^{\infty}} \mathsf{M}_{\mathfrak{w}} [E] d \nu (\mathfrak{w}),$$
where $\mathsf{M}_{\mathfrak{w}}$ is the extremal measure associated with $\mathfrak{w} \in W^{\infty}$ (for the parameter $\theta$), and where the map $\mathfrak{w} \mapsto \mathsf{M}_{\mathfrak{w}} [E] $ is measurable. 
\end{itemize}
\end{prop}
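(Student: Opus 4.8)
The plan is to take $W^{\infty} := \Omega$ with its Borel $\sigma$-algebra and to obtain the disintegration from a backward (reverse) martingale argument on the tail $\sigma$-algebra of the array, fed into Propositions~\ref{necessity} and~\ref{sufficiency}. The bijection in the first bullet is essentially already established in the proof of Theorem~\ref{main1}: $\omega \mapsto \mathsf{M}^{\theta}_{\omega}$ is a well-defined injection of $\Omega$ into the consistent distributions (by Proposition~\ref{sufficiency}, the characteristic function $\mathfrak{F}_{\omega,\theta}$ of the diagonal entries separating the points of $\Omega$), and Propositions~\ref{ExtremalityConvergenceOfOrbital} and~\ref{necessity} show its image contains every extremal consistent distribution. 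Two points remain: the measurability of $\omega \mapsto \mathsf{M}^{\theta}_{\omega}[E]$, and the extremality of each $\mathsf{M}^{\theta}_{\omega}$. For the first, as at the end of the proof of Proposition~\ref{thetainfinitybijection} one can choose, measurably in $\omega$, a sequence $\{a^{(N)}(\omega)\}_{N\ge 1}$ satisfying the O-V conditions with limit point $\omega$; by Proposition~\ref{sufficiency} the restriction of $\mathsf{M}^{\theta}_{\omega}$ to the first $K$ rows is the weak limit of the corresponding orbital measures, which (the spaces $W^{1}\times\cdots\times W^{K}$ being Polish, and the orbital measures depending continuously on the top row) depends measurably on $\omega$; a monotone class argument then gives measurability for every event $E$. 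The extremality of $\mathsf{M}^{\theta}_{\omega}$ will follow from the disintegration below.

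For the disintegration, fix a consistent distribution $\mathsf{M}$ and let $\{a^{(i)}\}_{i\ge 1}$ be the canonical array. Set $\mathcal{F}_{N} := \sigma(a^{(j)} : j\ge N)$ and $\mathcal{F}_{\infty} := \bigcap_{N\ge 1}\mathcal{F}_{N}$. By the Markov property \eqref{Markovcondition}, for $N>K$ the regular conditional distribution of $(a^{(1)},\dots,a^{(K)})$ given $\mathcal{F}_{N}$ is exactly the restriction to the first $K$ rows of the orbital distribution with top row $a^{(N)}$. Applying the reverse martingale convergence theorem along a countable convergence-determining family of bounded continuous functions of $(a^{(1)},\dots,a^{(K)})$, these conditional distributions converge weakly, $\mathsf{M}$-almost surely, to the regular conditional distribution given $\mathcal{F}_{\infty}$. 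In particular, $\mathsf{M}$-a.s.\ the restricted orbital measures converge for every $K$, so Proposition~\ref{necessity}, applied pathwise, shows that $\mathsf{M}$-a.s.\ the sequence $\{a^{(i)}\}_{i\ge 1}$ satisfies the O-V conditions; let $\omega\in\Omega$ denote its limit point, an $\mathcal{F}_{\infty}$-measurable random variable (being a pointwise limit of continuous functionals of the array). By Proposition~\ref{sufficiency}, the limit of the restricted orbital measures is $\mathsf{M}^{\theta}_{\omega}$ restricted to the first $K$ rows; comparing the two descriptions of the limit identifies the conditional distribution of $(a^{(1)},\dots,a^{(K)})$ given $\mathcal{F}_{\infty}$ with the restriction of $\mathsf{M}^{\theta}_{\omega}$ to the first $K$ rows, $\mathsf{M}$-a.s. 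Since this holds for all $K$ and laws on infinite interlacing arrays are determined by their finite-dimensional marginals, the conditional law of the whole array given $\mathcal{F}_{\infty}$ is $\mathsf{M}^{\theta}_{\omega}$. Letting $\nu$ be the law of $\omega$ under $\mathsf{M}$ and conditioning on $\mathcal{F}_{\infty}$,
$$\mathsf{M}[E] \;=\; \mathbb{E}_{\mathsf{M}}\!\left[\mathbb{P}_{\mathsf{M}}(E\mid\mathcal{F}_{\infty})\right] \;=\; \mathbb{E}_{\mathsf{M}}\!\left[\mathsf{M}^{\theta}_{\omega}[E]\right] \;=\; \int_{\Omega}\mathsf{M}^{\theta}_{\omega}[E]\,d\nu(\omega),$$
which is the required representation, with $W^{\infty}=\Omega$.

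To finish the first bullet, note that applying this disintegration to $\mathsf{M}=\mathsf{M}^{\theta}_{\omega}$ shows the tail $\sigma$-algebra is $\mathsf{M}^{\theta}_{\omega}$-trivial (the conditional law given $\mathcal{F}_{\infty}$ being $\mathsf{M}^{\theta}_{\omega}$ itself); hence if $\mathsf{M}^{\theta}_{\omega}=t\mathsf{M}_{1}+(1-t)\mathsf{M}_{2}$ with $0<t<1$, then each $\mathsf{M}_{i}\le t^{-1}\mathsf{M}^{\theta}_{\omega}$ also has trivial tail, so by the disintegration $\mathsf{M}_{i}=\mathsf{M}^{\theta}_{\omega_{i}}$ for some $\omega_{i}$, and comparing the (conditionally i.i.d., hence i.i.d.) laws of the diagonal entries forces $\omega_{1}=\omega_{2}=\omega$. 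Thus every $\mathsf{M}^{\theta}_{\omega}$ is extremal, which completes the bijection.

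I expect the main obstacle to be technical bookkeeping rather than a new idea: ensuring existence and $\mathsf{M}$-a.s.\ weak convergence of the regular conditional distributions (handled by Polishness of $W^{1}\times\cdots\times W^{K}$), arranging that the orbital measures and the O-V limit point are measurable functions of the top row and of the array respectively, and running the short extremality argument. All the genuine analytic content has already been isolated in Propositions~\ref{necessity} and~\ref{sufficiency}, so this proof is essentially the clean packaging of a reverse-martingale/tail-$\sigma$-algebra argument.
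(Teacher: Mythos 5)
Your argument is essentially correct and takes a genuinely different route from the paper. The paper's proof of this proposition is purely abstract: it works in a category of standard Borel spaces with Markov kernels as morphisms, cites Winkler's general theorem on inverse limits to get a ``limit object'' $W^{\infty}$, realizes $W^{\infty}$ as the extremal points of the Polish simplex $\underset{\leftarrow}{\lim}\,\mathcal{M}_p(W^N)$, and then derives the disintegration from Choquet theory (Corollary 3.2.5 in Winkler). Crucially, in the paper, Proposition~\ref{disintegration1} does \emph{not} identify $W^{\infty}$ with $\Omega$; that identification is carried out separately in Proposition~\ref{disintegration2}, and it relies on Theorem~\ref{main1}.

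You instead take $W^{\infty}=\Omega$ from the start and prove the disintegration directly by conditioning on the tail $\sigma$-algebra $\mathcal{F}_{\infty}$ and running a reverse-martingale argument on regular conditional distributions, invoking Propositions~\ref{necessity} and \ref{sufficiency} pathwise. In effect you are proving the paper's Theorem~\ref{ConsistentAlmostSureConvergence} (a.s.\ existence of an O--V limit point and identification of the conditional law given the tail with $\mathsf{M}^{\theta}_{\omega}$) for an arbitrary consistent $\mathsf{M}$, not just the extremal ones, and extracting both Propositions~\ref{disintegration1} and \ref{disintegration2} from this single statement. Your method is more self-contained and concrete --- it bypasses the Choquet/Winkler machinery entirely --- at the cost of some bookkeeping with regular conditional distributions, which you correctly flag (Polishness of $W^1\times\cdots\times W^K$, a countable convergence-determining family, measurability of the O--V limit point as an $\mathcal{F}_{\infty}$-measurable variable). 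The paper's route, by contrast, isolates the abstract existence of a boundary from its identification, which makes the logical dependencies cleaner at the cost of outsourcing the core step to general simplex theory.

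One small imprecision: in your extremality argument you assert that applying the disintegration to $\mathsf{M}=\mathsf{M}^{\theta}_{\omega}$ yields $\mathcal{F}_{\infty}$-triviality ``because the conditional law given $\mathcal{F}_{\infty}$ is $\mathsf{M}^{\theta}_{\omega}$ itself.'' The disintegration only gives that the conditional law is $\mathsf{M}^{\theta}_{\omega'}$ for a \emph{random} $\omega'$; to conclude $\omega'=\omega$ a.s.\ (equivalently, that the mixing measure $\tilde\nu$ is $\delta_{\omega}$) you need an extra step, e.g.\ observing that under $\mathsf{M}^{\theta}_{\omega}$ the diagonal entries are i.i.d., so by de Finetti their directing measure is deterministic, and since $\mathfrak{F}_{\cdot,\theta}$ separates the points of $\Omega$ this forces $\tilde\nu=\delta_{\omega}$. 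This is exactly the de Finetti argument the paper already runs in Section~\ref{sectionsufficiency} to prove extremality as part of Proposition~\ref{sufficiency}, so your separate extremality argument is not strictly needed --- but as written it has a small circular-looking step that should be made explicit.
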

\begin{prop} \label{disintegration2}
In Proposition \ref{disintegration1}, we can take for all $\theta \in (0,\infty)$, $W^{\infty} = \Omega$, $\omega \mapsto \mathsf{M}^{\theta}_{\omega}$ being the bijection involved in the first item. 
\end{prop}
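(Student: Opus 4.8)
The plan is to treat the abstract boundary $W^{\infty}$ of Proposition~\ref{disintegration1} purely as an intermediary and to transport its structure onto $\Omega$. Recall from the proof of Theorem~\ref{main1} — which combined Propositions~\ref{ExtremalityConvergenceOfOrbital}, \ref{necessity} and \ref{sufficiency} — that $\omega \mapsto \mathsf{M}^{\theta}_{\omega}$ is a bijection from $\Omega$ onto the set of extremal consistent distributions. Composing it with the bijection between $W^{\infty}$ and the extremal consistent distributions provided by the first item of Proposition~\ref{disintegration1}, we obtain a bijection $\Phi\colon W^{\infty} \to \Omega$ characterised by $\mathsf{M}_{\mathfrak{w}} = \mathsf{M}^{\theta}_{\Phi(\mathfrak{w})}$. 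Once we know that $\Phi$ is a Borel isomorphism we are done: given any consistent distribution $\mathsf{M}$, Proposition~\ref{disintegration1} yields $\nu$ on $W^{\infty}$ with $\mathsf{M}[E] = \int_{W^{\infty}} \mathsf{M}_{\mathfrak{w}}[E]\,d\nu(\mathfrak{w})$, and then $\tilde{\nu} := \Phi_{*}\nu$ on $\Omega$ satisfies $\mathsf{M}[E] = \int_{\Omega} \mathsf{M}^{\theta}_{\omega}[E]\,d\tilde{\nu}(\omega)$ by the change-of-variables formula, which is precisely the representation of Proposition~\ref{disintegration1} with $W^{\infty}$ replaced by $\Omega$ and $\mathfrak{w}\mapsto\mathsf{M}_{\mathfrak{w}}$ replaced by $\omega\mapsto\mathsf{M}^{\theta}_{\omega}$. (The full bijectivity asserted in Theorem~\ref{main2} is then assembled from Propositions~\ref{disintegration1} and \ref{disintegration2} together.)

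So the real content is the measurability of $\Phi$ and $\Phi^{-1}$. The $\sigma$-algebra on $W^{\infty}$ in Proposition~\ref{disintegration1} is the one generated by the evaluation maps $\mathfrak{w}\mapsto\mathsf{M}_{\mathfrak{w}}[E]$, so $\Phi^{-1}$, i.e. $\omega\mapsto\mathsf{M}^{\theta}_{\omega}$, is measurable as soon as $\omega\mapsto\mathsf{M}^{\theta}_{\omega}[E]$ is Borel on $\Omega$ for every event $E$. To see this I would use the construction underlying Proposition~\ref{sufficiency}: $\mathsf{M}^{\theta}_{\omega}$ is obtained, in the sense of convergence of finite-dimensional marginals, as the weak limit of the orbital distributions with top rows $\tilde{a}^{(N)}(\omega)$, and the explicit recipe used in the proof of Proposition~\ref{thetainfinitybijection} (the coordinates $N\alpha^{+}_i,N\alpha^{-}_i$ for $i\le N^{1/10}$ together with the two values $\mu_N\pm\sqrt{\gamma_2 N}$ with prescribed multiplicities) makes $\omega\mapsto\tilde{a}^{(N)}(\omega)\in W^N$ a continuous, hence Borel, function of $\omega$. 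Since $b\mapsto\Lambda^{\theta}_{N,K}(b,\cdot)$ is weakly continuous in $b$ and the $K$-row marginal of $\mathsf{M}^{\theta}_{\omega}$ is the weak limit over $N$ of the corresponding orbital marginals, $\omega\mapsto(\text{the }K\text{-row marginal of }\mathsf{M}^{\theta}_{\omega})$ is a Borel map into the Polish space of probability measures on $W^1\times\cdots\times W^K$; hence $\omega\mapsto\mathsf{M}^{\theta}_{\omega}[E]$ is Borel for every cylinder event $E$, and a monotone class argument extends this to all $E$.

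For measurability of $\Phi$ itself it is cleanest to note that both spaces are standard Borel: $\Omega$ is homeomorphic to $\Omega'$, which is a closed subset of the Polish space $\mathbb{R}^{\infty}\times\mathbb{R}^{\infty}\times\mathbb{R}\times\mathbb{R}_+$ in the product topology, hence Polish, and its Borel structure coincides with the one for pointwise convergence by the remark following Definition~\ref{Omegadef}; and $W^{\infty}$ may be taken standard Borel in Proposition~\ref{disintegration1}. By the Lusin--Souslin theorem, a measurable bijection between standard Borel spaces is automatically a Borel isomorphism, so the measurability of $\Phi^{-1}$ established above already gives that $\Phi$ is a Borel isomorphism. (Alternatively, one can exhibit $\Phi$ directly as Borel: the law of $a^{(1)}=d_1$ under $\mathsf{M}^{\theta}_{\omega}$ has characteristic function $\mathfrak{F}_{\omega,\theta}$, and the parameters $\gamma_1,\gamma_2$ and $\alpha^{\pm}_k$ are read off in a Borel way from this function — from its linear and Gaussian parts and from the locations and orders of the singularities of its meromorphic extension.) The step I expect to be the main obstacle is the measurability of $\omega\mapsto\mathsf{M}^{\theta}_{\omega}[E]$: one must make sure the approximating O-V sequences can be chosen in a genuinely measurable (indeed continuous) manner in $\omega$, rather than merely existentially as in Proposition~\ref{sufficiency}, and that the non-standard topology on $\Omega$ causes no difficulty — both points being handled, respectively, by the explicit construction from Proposition~\ref{thetainfinitybijection} and by the coincidence of Borel structures on $\Omega$.
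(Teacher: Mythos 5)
Your overall structure mirrors the paper's: both proofs reduce Proposition~\ref{disintegration2} to showing that $\psi_1\colon\omega\mapsto\mathsf{M}^{\theta}_{\omega}$ is Borel from the standard Borel space $\Omega$ into the standard Borel space $W^{\infty}$, and both then invoke the same abstract result (Lusin--Souslin, which the paper cites as Mackey's theorem) to promote a Borel bijection between standard Borel spaces to a Borel isomorphism. Where you genuinely diverge is in \emph{how} the Borel-measurability of $\psi_1$ is established. You exhibit an explicit measurable family of O-V sequences $\omega\mapsto\tilde a^{(N)}(\omega)$ and pass to weak limits via Proposition~\ref{sufficiency}, using that the pointwise limit of Borel maps into a Polish space of measures is Borel. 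The paper instead proves the stronger statement that $\psi_1$ is \emph{continuous}: via the Dunkl--L\'evy continuity theorem (Proposition~\ref{LevyForDunklTransform}) it reduces to continuity of the law of a single diagonal entry, and this is checked on the cumulants computed in Proposition~\ref{sufficiencyuniformstrip} --- namely $\gamma_1$, $\theta^{-1}\delta$, and the power sums $(p-1)!\theta^{1-p}\bigl(\sum_i(\alpha_i^+)^p+\sum_i(\alpha_i^-)^p\bigr)$ --- using that the second cumulant is $\theta^{-1}\delta$, not a function of $\gamma_2$. That is precisely why the topology on $\Omega$ is defined through $\Omega'$; you note the coincidence of Borel structures but do not exploit this feature, because your route never needs continuity.

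One inaccuracy you should correct: $\omega\mapsto\tilde a^{(N)}(\omega)$ is \emph{not} continuous. The construction from Proposition~\ref{thetainfinitybijection} uses $\sqrt{\gamma_2 N}$, and $\gamma_2 = \delta - \sum_i(\alpha_i^+)^2 - \sum_i(\alpha_i^-)^2$ is only lower (indeed upper, after the sign) semicontinuous on $\Omega$, not continuous --- e.g.\ a sequence with $n$ coordinates equal to $1/\sqrt n$ and $\delta\equiv 1$ has $\gamma_2\equiv 0$ but converges to a point with $\gamma_2=1$. This does not break your argument, because you only ever use Borel-measurability of $\tilde a^{(N)}$, which is immediate since $\gamma_2$ is a pointwise limit of continuous finite truncations; but the phrase ``continuous, hence Borel'' should be replaced by ``Borel.'' (If you wanted genuine continuity you could replace $\gamma_2$ in the construction by the truncation $\delta - \sum_{i\le N^{1/10}}(\alpha_i^+)^2 - \sum_{i\le N^{1/10}}(\alpha_i^-)^2$, which is continuous and still tends to $\gamma_2$, but this is unnecessary for your purposes.)
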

These two propositions immediately imply that all consistent distributions on the infinite families of interlacing arrays are of the form $\mathsf{M}^{\theta}_{\nu}$ for some probability measure $\nu$ on $\Omega$. 
Conversely, if $\nu$ is a probability measure on $\Omega$, it is clear that the formula \eqref{Mthetanu} defines a probability distribution on the infinite families of interlacing arrays, whose consistency is ensured by 
the linearity of \eqref{Markovcondition} with respect to the underlying probability measure. 
Theorem \ref{main2} is then proven, provided that the map $\nu \mapsto \mathsf{M}^{\theta}_{\nu}$ is injective. This is ensured by the following result, which is of interest by itself: 
\begin{thm}\label{ConsistentAlmostSureConvergence}
Let $\theta \in (0, \infty]$, and let $\nu$ be a probability measure on $\Omega$. Then, for an infinite family of interlacing arrays following the distribution $\mathsf{M}^{\theta}_{\nu}$, the successive 
rows a.s. satisfy the O-V conditions, and the corresponding  limit point follows the distribution $\nu$. 
\end{thm}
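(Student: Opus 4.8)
The plan is to reduce to the extremal measures and then feed the realized rows back into the convergence results of the earlier sections. The case $\theta=\infty$ is immediate: by Proposition~\ref{thetainfinitybijection} every consistent family of interlacing arrays satisfies the O-V conditions and is uniquely determined by its limit point, and $\mathsf{M}^\infty_\nu$ is by construction the law of a random consistent family whose limit point has law $\nu$; so assume from now on that $\theta<\infty$. Using the disintegration provided by Propositions~\ref{disintegration1} and \ref{disintegration2}, namely $\mathsf{M}^\theta_\nu[E]=\int_\Omega \mathsf{M}^\theta_\omega[E]\,d\nu(\omega)$ with $\omega\mapsto\mathsf{M}^\theta_\omega[E]$ measurable, it is enough to prove the following for each fixed $\omega\in\Omega$: \emph{under $\mathsf{M}^\theta_\omega$, almost surely the rows $\{a^{(N)}\}_{N\ge 1}$ satisfy the O-V conditions and their limit point equals $\omega$}. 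Indeed, writing $\mathcal{O}$ for the Borel event ``the rows satisfy the O-V conditions'' and $\Phi\colon\mathcal{O}\to\Omega$ for the Borel measurable map sending a family to its limit point, this gives $\mathsf{M}^\theta_\nu[\mathcal{O}^c]=\int\mathsf{M}^\theta_\omega[\mathcal{O}^c]\,d\nu=0$ and $\mathsf{M}^\theta_\nu[\Phi\in A]=\int\mathbf{1}_{\omega\in A}\,d\nu(\omega)=\nu(A)$ for every Borel $A\subseteq\Omega$, which is exactly the assertion of the theorem.

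So fix $\omega$ and work under $\mathsf{M}^\theta_\omega$, which is an extremal consistent distribution by Theorem~\ref{main1}. First I would invoke Proposition~\ref{ExtremalityConvergenceOfOrbital}: almost surely, for every $K\ge 1$, the restriction to the first $K$ rows of the orbital distribution with top row $a^{(N)}$ converges in law, as $N\to\infty$, to the law of $\{a^{(i)}\}_{1\le i\le K}$ under $\mathsf{M}^\theta_\omega$. Intersecting the corresponding full-measure events over $K\in\mathbb{N}$, we get a full-measure event on which the deterministic sequence of realized rows satisfies the hypothesis of Proposition~\ref{necessity}; that proposition then forces this sequence to satisfy the O-V conditions. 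Hence $\mathsf{M}^\theta_\omega[\mathcal{O}]=1$, and on $\mathcal{O}$ the limit point $\Phi(\{a^{(N)}\})$ is a well-defined element of $\Omega$.

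It remains to identify $\Phi(\{a^{(N)}\})$ with $\omega$ almost surely, and for this I would apply Proposition~\ref{sufficiency} pathwise. On $\mathcal{O}$ the realized rows form an O-V sequence with limit point $\Phi(\{a^{(N)}\})$, so by Proposition~\ref{sufficiency} the orbital distributions with these top rows converge, for every $K$, to the restriction to the first $K$ rows of $\mathsf{M}^\theta_{\Phi(\{a^{(N)}\})}$. Comparing this with the limit already identified by Proposition~\ref{ExtremalityConvergenceOfOrbital}, and using that a probability distribution on the infinite families of interlacing arrays is determined by its finite-dimensional marginals, we conclude that $\mathsf{M}^\theta_{\Phi(\{a^{(N)}\})}=\mathsf{M}^\theta_\omega$ almost surely; the injectivity of $\omega\mapsto\mathsf{M}^\theta_\omega$ from Theorem~\ref{main1} then gives $\Phi(\{a^{(N)}\})=\omega$ almost surely, completing the proof.

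No serious analytic obstacle is expected: the argument just glues together Propositions~\ref{ExtremalityConvergenceOfOrbital}, \ref{necessity} and \ref{sufficiency} with the disintegration and the injectivity part of Theorem~\ref{main1}. The main points requiring care are combining the ``almost surely, for all $K$'' quantifiers over the countable index set (and over a countable convergence-determining class of test functions when unwinding convergence in law on the Polish spaces $W^1\times\cdots\times W^K$), and checking the measurability of the event $\mathcal{O}$ and of the map $\Phi$ with values in $\Omega$ equipped with its Borel $\sigma$-algebra.
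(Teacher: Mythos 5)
Your proof is correct and follows essentially the same route as the paper: reduce to $\nu$ a Dirac mass (the paper's ``by linearity'' is your disintegration step), then chain Proposition~\ref{ExtremalityConvergenceOfOrbital} with Proposition~\ref{necessity} to get the O-V conditions almost surely, and chain Proposition~\ref{ExtremalityConvergenceOfOrbital} with Proposition~\ref{sufficiency} plus injectivity of $\omega\mapsto\mathsf{M}^\theta_\omega$ to identify the random limit point as $\omega$. Your write-up is a bit more explicit about the ``for all $K$'' intersection and the measurability of the limit-point map, but the underlying argument is the paper's.
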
 
\begin{proof}
The case $\theta = \infty$ is a consequence of the discussion at the end of Section \ref{theta=infty}, so we can assume $\theta < \infty$. Moreover, 
by linearity, we can  suppose  that $\nu$ is the Dirac distribution at some $\omega \in \Omega$, which imply that $\mathsf{M}^{\theta}_{\nu} = \mathsf{M}^{\theta}_{\omega}$ is an extremal measure. 
Under this measure, Propositions \ref{ExtremalityConvergenceOfOrbital} and \ref{necessity} together imply that the rows a.s. satisfy the O-V conditions. 
Combining Propositions  \ref{ExtremalityConvergenceOfOrbital} and \ref{sufficiency}, we then deduce that the rows 
follow the distribution  $\mathsf{M}^{\theta}_{\omega'}$, where $\omega'$ is the limit point of the rows. In other words, 
we have $\mathsf{M}^{\theta}_{\omega} = \mathsf{M}^{\theta}_{\omega'}$, and then $\omega = \omega'$, i.e. the limit point of the rows is $\omega$. 
\end{proof}

It is now sufficient to prove the five propositions given in this section. 
Proposition \ref{ExtremalityConvergenceOfOrbital} will  be proven in Section \ref{ConvergenceOfOrbital}, Proposition \ref{necessity} in Section \ref{sectionnecessity}, Proposition \ref{sufficiency} in Section 
\ref{sectionsufficiency}, Propositions \ref{disintegration1} and \ref{disintegration2}  in Section \ref{sectiondisintegration}.

\section{The extremal consistent distributions as limits of orbital distributions} \label{ConvergenceOfOrbital}

In this section, we prove Proposition \ref{ExtremalityConvergenceOfOrbital}, which is an adaptation of Proposition 10.8 in \cite{OlshanskiHarmonic} to our setting, see also \cite{OlshanskiVershik} and the original paper of Vershik \cite{VershikErgodic} where he introduced this so-called 'ergodic method'.

For $N \geq 1$, let us denote by  $\mathcal{F}_{-N}$ the $\sigma$-algebra generated by the random variables $\{ a^{(i)}\}_{i \geq N}$, and let $\mathcal{F}_{-\infty}$ be the intersection of 
$\mathcal{F}_{-N}$ for $N \geq 1$. Let $\mathsf{M}$ be the law of $\{ a^{(i)}\}_{i \geq 1}$, and 
let $A$ be an event in $\mathcal{F}_{-\infty}$. If $k \geq 1$, and if $E$ is an event in $\mathcal{F}_{-k-1}$, we have, from the Markov property satisfied by $\mathsf{M}$
and from the fact that $A \cap E \in \mathcal{F}_{-k-1}$, 
$$\mathsf{M} [ A \cap E \cap \{a^{(k)} \in X\}] = \mathbb{E}^{\mathsf{M}} \left[ \mathbf{1}_{A \cap E}  \Lambda_{k+1,k}^\theta (a^{(k+1)},X) \right]$$
for any Borel set $X \in W^k$. Hence, if $\mathsf{M}(A) > 0$ and if $\mathsf{M}_A$ is the probability measure  given by the restriction of $\mathsf{M}$ to $A$, divided by $\mathsf{M}(A)$, 
we have 
$$\mathsf{M}_A [  E \cap \{a^{(k)} \in X\} ] =  \mathbb{E}^{\mathsf{M}_A} \left[ \mathbf{1}_{E}  \Lambda_{k+1,k}^\theta (a^{(k+1)},X) \right].$$
This shows that $\mathsf{M}_A$ is a consistent distribution on the infinite families of interlacing arrays. 
If $A \in \mathcal{F}_{-\infty}$ is an event with probability strictly between $0$ and $1$, we can apply the result to $A$ and $A^c$, which contradicts the assumption that $\mathsf{M}$ is an extremal measure, 
since 
$$\mathsf{M} = \mathsf{M}(A) \mathsf{M}_A + \mathsf{M}(A^c) \mathsf{M}_{A^c}.$$
Hence, the $\sigma$-algebra $\mathcal{F}_{-\infty}$  is trivial for the probability measure $\mathsf{M}$. 

Let $f$ be a bounded functional which is  measurable with respect to the $\sigma$-algebra generated by $\{ a^{(i)}\}_{1 \leq i \leq K}$, for some finite $K \geq 1$. We have
 that $(\mathbb{E}^{\mathsf{M}} [ f | \mathcal{F}_{-N}])_{N \geq 1}$ is 
a bounded reversed martingale, which then tends a.s. to $\mathbb{E}^{\mathsf{M}} [ f | \mathcal{F}_{-\infty}]$, which is $\mathbb{E}^{\mathsf{M}} [ f ]$ since 
$ \mathcal{F}_{-\infty}$ is trivial. 

For $N \geq K$, let $\mathsf{M}_{a^{(N)}}$ be the random orbital measure with top row $a^{(N)}$ and parameter $\theta$. From the Markov property, one gets a.s.:
$$\mathbb{E}^{\mathsf{M}} [ f | \mathcal{F}_{-N}] = \mathbb{E}^{\mathsf{M}_{a^{(N)}}} [f]$$
 and then 
$$\mathbb{E}^{\mathsf{M}_{a^{(N)}}} [f] \underset{N \rightarrow \infty}{\longrightarrow}  \mathbb{E}^{\mathsf{M}}[f].$$
Combining a suitable countable set of functionals $f$, we deduce that the restriction of $\mathsf{M}_{a^{(N)}}$ to the $\sigma$-algebra generated by $\{ a^{(i)}\}_{1 \leq i \leq K}$ a.s. converges 
to the corresponding restriction of $\mathsf{M}$, which proves Proposition \ref{ExtremalityConvergenceOfOrbital}. 

\section{Necessity of the Olshanski-Vershik conditions for convergence of orbital measures} \label{sectionnecessity}
In this section, we prove Proposition \ref{necessity}. In fact, it will be sufficient to consider the distribution of the bottom entry $a^{(1)}$, given in the following proposition: 
\begin{prop} \label{orbitalDirichlet} 
Under the orbital distribution of top row $(a_1, \dots, a_N)$ and parameter $\theta \in (0,\infty)$, the bottom entry has the distribution of 
$\sum_{j=1}^N \alpha^{(N)}_j a_j$, 
where $(\alpha^{(N)}_1, \dots, \alpha^{(N)}_{N})$ is Dirichlet distributed, all parameters being equal to $\theta$. 
\end{prop}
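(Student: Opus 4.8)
The plan is to prove Proposition \ref{orbitalDirichlet} by induction on $N$, the base case $N=1$ being trivial (the orbital distribution of a one-row top is a point mass, and the empty Dirichlet is trivial). For the inductive step, I would decompose the orbital measure along its topmost transition: by definition, under the orbital distribution with top row $a=(a_1,\dots,a_N)$, the second row $a^{(N-1)}=:b$ is distributed according to $\Lambda^\theta_{N,N-1}(a,\cdot)$, and then conditionally on $b$ the remaining rows follow the orbital distribution with top row $b$. By the induction hypothesis applied at level $N-1$, conditionally on $b$ the bottom entry $a^{(1)}$ has the law of $\sum_{i=1}^{N-1}\gamma_i b_i$ where $(\gamma_1,\dots,\gamma_{N-1})$ is Dirichlet$(\theta,\dots,\theta)$ and independent of $b$. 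So the bottom entry has the law of $\sum_{i=1}^{N-1}\gamma_i b_i$ with $b$ itself a (random) linear combination of the $a_j$; the task reduces to a statement purely about Dirichlet distributions and the Dixon-Anderson kernel.

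The key identity I would invoke is the classical \emph{linear-algebra interpretation} of the Dixon-Anderson distribution: if $(\alpha_1,\dots,\alpha_{N})$ is Dirichlet$(\theta,\dots,\theta)$ then the roots $b=(b_1,\dots,b_{N-1})$ of $z\mapsto\sum_{j=1}^{N}\alpha_j\prod_{k\neq j}(z-a_k)$ satisfy, for each fixed $b$, an expression of each $b_i$ as a convex combination of the $a_j$'s; more to the point, there is a change-of-variables/Markov-kernel identity showing that composing Dixon-Anderson from level $N$ to $N-1$ and then an independent Dirichlet-averaging over the $N-1$ entries of $b$ produces exactly a Dirichlet-averaging over the $N$ entries of $a$. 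Concretely: if $(\alpha_1,\dots,\alpha_N)\sim\mathrm{Dir}(\theta,\dots,\theta)$ generates $b$ via Dixon-Anderson, and $(\gamma_1,\dots,\gamma_{N-1})\sim\mathrm{Dir}(\theta,\dots,\theta)$ is independent, then the random probability vector on $\{a_1,\dots,a_N\}$ induced by $\sum_i\gamma_i b_i=\sum_j(\sum_i\gamma_i c_{ij})a_j$ (where $b_i=\sum_j c_{ij}a_j$) has the same law as a single $\mathrm{Dir}(\theta,\dots,\theta)$ vector of length $N$. The cleanest route to this is to use the \emph{consistency of the $\beta$-corner/Dixon-Anderson process restricted to the corner}, or equivalently a direct beta-integral computation: for distinct $a_j$'s one writes the joint density of $b$ from \eqref{DixonAndersonExplicit}, then the conditional density of $\sum_i\gamma_i b_i$, and integrates out $b$ using a Dirichlet/Selberg-type integral. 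Since $\Lambda^\theta_{N,N-1}(b,\cdot)$ is continuous in $b$ and the Dirichlet average is continuous in $a$, the identity for distinct $a_j$ extends to all $a\in W^N$ by a density/continuity argument, which is exactly the kind of reduction the paper emphasises after the Definition of $\Lambda^\theta_{N+1,N}$.

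I would in fact prefer the following slicker argument avoiding explicit integration. Recall that if $D$ is a diagonal matrix with entries $a_1,\dots,a_N$ and $U$ is Haar-distributed on the relevant compact group for $\theta\in\{1/2,1,2\}$ (or more generally using the $\beta$-analogue), the orbital distribution is the law of the eigenvalue-interlacing array of $U D U^*$, and the bottom $1\times 1$ corner is $(UDU^*)_{11}=\sum_j |U_{1j}|^2 a_j$; the vector $(|U_{1j}|^2)_j$ is precisely $\mathrm{Dir}(\theta,\dots,\theta)$. But since we want a proof valid for all $\theta\in(0,\infty)$ without matrix models, I would instead phrase this as: the map "take the bottom entry" intertwines $\Lambda^\theta_{N,1}$ with the Dirichlet average, and prove $\Lambda^\theta_{N,1}=\Lambda^\theta_{2,1}\circ\cdots\circ\Lambda^\theta_{N,N-1}$ pushes forward to $\mathrm{Dir}(\theta,\dots,\theta)$ by the induction above combined with the single algebraic fact about composing two Dirichlet-averages through one Dixon-Anderson step. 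The main obstacle is precisely establishing that one-step identity — i.e. that a Dixon-Anderson transition followed by a length-$(N-1)$ Dirichlet average equals a length-$N$ Dirichlet average — which is where a genuine Selberg/Dirichlet integral (or the aggregation property of the Dirichlet distribution, viewing Dixon-Anderson's $\alpha_j$ as the "missing" coordinate) must be carried out carefully; everything else is bookkeeping and a routine continuity extension from the case of distinct $a_j$.
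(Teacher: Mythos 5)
Your reduction is structurally coherent, but the heart of it is missing: the ``one-step identity'' you single out at the end --- that a Dixon--Anderson transition followed by an independent length-$(N-1)$ Dirichlet average of $b$ equals a length-$N$ Dirichlet average of $a$ --- is not proved, and it is essentially the whole content of the proposition. The three routes you gesture at do not close it. The matrix-model argument ($(UDU^*)_{11}=\sum_j |U_{1j}|^2 a_j$) is valid only for $\theta\in\{1/2,1,2\}$ and cannot serve for general $\theta$, which you yourself note. The ``aggregation property of the Dirichlet'' does not apply in any direct way: the roots $b_i$ are nonlinear, implicit functions of $(\alpha,a)$ and there is no coordinate regrouping that turns $\sum_i\gamma_i b_i$ into a visibly Dirichlet-weighted combination of the $a_j$. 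The explicit-integration route (write the $\Lambda^\theta_{N,N-1}(a,db)$ density from \eqref{DixonAndersonExplicit}, write the density of the Dirichlet average $\sum_i\gamma_i b_i$ for fixed $b$, and integrate out $b$) is plausible but is itself a nontrivial Selberg-type computation that you neither carry out nor reference; without it the induction does not advance, since the statement you need at step $N$ is precisely what the proposition claims at level $N$ once the level-$(N-1)$ hypothesis is plugged in.

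The paper takes a different and much shorter path that avoids this altogether. It uses Proposition~\ref{OrbitalAndBesselProp} together with the symmetry of $\mathfrak{B}_a(y;\theta)$ in $(y_1,\dots,y_N)$ (Theorem~\ref{ExistenceBesselTheorem}) to conclude that the diagonal entries $(d_1,\dots,d_N)$ of the orbital process are exchangeable; hence $d_1\stackrel{d}{=}d_N=s_N-s_{N-1}$. The quantity $s_N-s_{N-1}$ is then computed directly from the Dixon--Anderson definition by comparing the $z^{N-2}$ coefficients of $\prod_i(z-b_i)=\sum_j\alpha_j\prod_{k\neq j}(z-a_k)$, which immediately gives $s_N-s_{N-1}=\sum_k a_k\alpha_k$. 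In other words, the symmetry of the Bessel function replaces your unproved one-step identity, and the relevant Dirichlet average appears not for the bottom corner $d_1$ but for the top diagonal entry $d_N$, where it is an elementary relation between roots and coefficients. If you want to pursue your induction, the missing piece is precisely this exchangeability (or a Selberg-integral identity equivalent to it); without supplying one of these, the proof does not go through.
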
 

\begin{proof}
By Proposition 3.3 of \cite{CuencaOrbital} (see Proposition \ref{OrbitalAndBesselProp} below), the joint Laplace transform of the diagonal entries, under an orbital measure whose top row has distinct elements, is given by a multivariate Bessel function, which 
is symmetric with respect to its arguments (see Theorem \ref{ExistenceBesselTheorem} below). By continuity, the condition that the elements of the top row are distinct can be dropped. 
The diagonal entries are then exchangeable, which in particular implies that the bottom entry has the same law as 
$s_N - s_{N-1}$, where $s_j$ is the sum of the $j$ elements of the $j$-th row. Now, from the definition of the Dixon-Anderson conditional probability distribution, and the 
relation between roots and coefficients of a polynomial, we have the identity in distribution: 
$$s_{N-1} = \left( \sum_{j=1}^N \alpha^{(N)}_j \right)^{-1}  \left( \sum_{j=1}^N \alpha^{(N)}_j \sum_{1 \leq k \leq N, k \neq j} a_k \right)$$
and since $\sum_{j=1}^N \alpha^{(N)}_j = 1$, 
$$s_{N-1} =\sum_{k=1}^N a_k \sum_{1 \leq j \leq N, j \neq k} \alpha^{(N)}_j =  \sum_{k=1}^N a_k ( 1- \alpha^{(N)}_k),$$
which gives 
$$s_N - s_{N-1} = \sum_{k=1}^N a_k \alpha^{(N)}_k.$$
\end{proof}

We will also use the following lemma: 
\begin{lem} \label{modulo} 
If a sequence of real numbers converges modulo any non-zero real number, then it converges. 
\end{lem}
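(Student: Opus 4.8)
\emph{Statement to prove.} If $(x_n)_{n \geq 1}$ is a sequence of real numbers such that for every $t \in \mathbb{R} \setminus \{0\}$ the reduction $x_n \bmod t$ converges in $\mathbb{R}/t\mathbb{Z}$, then $(x_n)$ converges in $\mathbb{R}$.

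\emph{Approach.} The plan is to argue by contradiction: suppose $(x_n)$ does not converge. We distinguish two cases according to whether $(x_n)$ is bounded or not. I would first reduce to showing that convergence modulo two suitably incommensurable moduli forces genuine convergence, but since we are allowed convergence modulo \emph{every} nonzero $t$, there is slack to exploit, and the cleanest route is the contradiction argument below.

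\emph{Case 1: $(x_n)$ is bounded.} If $(x_n)$ does not converge, then by Bolzano--Weierstrass it has two subsequential limits $\ell \neq \ell'$. Pick any $t > 0$ with $t$ irrational multiple of... more simply, pick $t$ with $0 < t < |\ell - \ell'|$ and also such that $\ell - \ell' \notin t\mathbb{Z}$; such $t$ clearly exists (the set of bad $t$ in any interval $(0,\epsilon)$ is countable). Along the subsequence converging to $\ell$, the reductions $x_n \bmod t$ converge to $\ell \bmod t$, and along the one converging to $\ell'$ they converge to $\ell' \bmod t$. By hypothesis the full sequence of reductions converges, so $\ell \equiv \ell' \pmod t$, i.e. $\ell - \ell' \in t\mathbb{Z}$, contradicting the choice of $t$.

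\emph{Case 2: $(x_n)$ is unbounded.} Here I would derive a contradiction directly from convergence modulo a single $t$, say $t = 1$: since $x_n \bmod 1$ converges, the sequence $(x_n)$ stays within bounded distance of $\mathbb{Z}$ plus a fixed fractional part; more precisely, writing $x_n = k_n + r_n$ with $k_n \in \mathbb{Z}$ and $r_n \to r$ for some $r \in [0,1)$, unboundedness of $(x_n)$ forces $|k_n| \to \infty$ along a subsequence. Now use convergence modulo an irrational $t$, say $t = \sqrt{2}$: the reductions $x_n \bmod \sqrt 2$ converge, while $x_n \bmod \sqrt 2 = (k_n + r_n) \bmod \sqrt 2$, and as $r_n \to r$ this behaves like $k_n \bmod \sqrt 2$ up to a convergent perturbation. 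But $\{ k \bmod \sqrt 2 : k \in \mathbb{Z} \}$ is dense in $\mathbb{R}/\sqrt 2\,\mathbb{Z}$ by Weyl equidistribution, and along the subsequence where $|k_n| \to \infty$ one can extract (by passing to a further subsequence) values of $k_n \bmod \sqrt 2$ approaching any prescribed point of the circle, in particular two different points, contradicting convergence of $(x_n \bmod \sqrt 2)$. Hence $(x_n)$ is bounded, reducing to Case~1.

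\emph{Main obstacle.} The only subtlety is Case~2, the unbounded case, where one must rule out that the $x_n$ escape to infinity while remaining consistent modulo every $t$: the irrationality/equidistribution argument handles it, but one should be careful that the perturbation $r_n \to r$ does not interfere — it does not, because it converges, so the circle-valued sequence $x_n \bmod \sqrt 2$ differs from $k_n \bmod \sqrt 2$ by a convergent sequence, and a convergent sequence plus a non-convergent one is non-convergent. Everything else is elementary. One could alternatively phrase the whole proof as: the limits $\lim_n (x_n \bmod t)$, as $t$ ranges over $\mathbb{R}_{>0}$, must be compatible under the quotient maps $\mathbb{R}/t\mathbb{Z} \to \mathbb{R}/t'\mathbb{Z}$ for $t' \mid t$, and this compatibility across a cofinal family of $t$ pins down a single real number; but the contradiction argument above is shorter to write out cleanly.
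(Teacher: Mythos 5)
Your Case~1 (bounded sequences) is correct. Case~2, however, has a genuine gap. The assertion that, along a subsequence with $|k_n|\to\infty$, one can extract a further subsequence with $k_n \bmod \sqrt 2$ approaching any prescribed point of $\mathbb{R}/\sqrt 2\,\mathbb{Z}$ is false: density of the full orbit $\{k \bmod \sqrt 2 : k \in \mathbb{Z}\}$ says nothing about the particular integers $k_n$ occurring in your sequence. For instance, if $p_n/q_n$ are the continued-fraction convergents to $\sqrt 2$, then $p_n\to\infty$ while $p_n - q_n\sqrt 2 \to 0$, so $p_n \bmod \sqrt 2 \to 0$ in $\mathbb{R}/\sqrt 2\,\mathbb{Z}$; such $p_n$ can be realized as the integer parts of an unbounded sequence $(x_n)$ for which both $x_n\bmod 1$ and $x_n\bmod\sqrt 2$ converge. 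In fact no fixed \emph{finite} family of moduli can rule out unboundedness: by simultaneous Weyl equidistribution, for rationally independent $t_1,\dots,t_m$ one may choose integers $k_n\to\infty$ with $k_n\bmod t_i$ convergent for each $i$. So the unbounded case must use the hypothesis for all moduli at once, and your plan does not.

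The paper handles this uniformly without a case split. Since $e^{\i\lambda x_n}\to u_\lambda$ for every $\lambda$, dominated convergence gives
$$\frac{e^{\i\lambda_0 x_n}-1}{\i x_n}=\int_0^{\lambda_0}e^{\i\lambda x_n}\,d\lambda \longrightarrow \int_0^{\lambda_0}u_\lambda\,d\lambda,$$
and unboundedness of $(x_n)$ would make $0$ a limit point of the left side, forcing $\int_0^{\lambda_0}u_\lambda\,d\lambda=0$ for all $\lambda_0>0$; that is incompatible with $|u_\lambda|\equiv 1$. Once $x_n\in[-A,A]$ is established, the paper concludes in one line from convergence modulo $3A$, which also supersedes your Bolzano--Weierstrass argument in Case~1. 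Finally, your closing remark about compatibility of limits under the quotient maps $\mathbb{R}/t\mathbb{Z}\to\mathbb{R}/t'\mathbb{Z}$ is circular: that a compatible family must come from a single real number is exactly what the lemma claims, and is not automatic (the relevant inverse limit is strictly larger than $\mathbb{R}$).
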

\begin{proof}
If  $(x_n)_{n \geq 1}$ is a sequence converging modulo any non-zero real number, then for all $\lambda \in \mathbb{R}$, $e^{\i \lambda x_n}$ converges to a limit $u_{\lambda}$ of modulus $1$ when $n$ 
goes to infinity. By dominated convergence, for all $\lambda_0 > 0$, and $n$ such that $x_n \neq 0$, 
$$\frac{e^{\i \lambda_0 x_n} - 1 }{ \i x_n} = \int_{0}^{\lambda_0} e^{\i \lambda x_n} d \lambda \underset{n \rightarrow \infty}{\longrightarrow} \int_{0}^{\lambda_0} u_{\lambda} d \lambda.$$
On the other hand, if the sequence $(x_n)_{n \geq 1}$ is unbounded, it is clear that zero is a limit point of the left-hand side, which implies 
that $$\int_{0}^{\lambda_0} u_{\lambda} d \lambda = 0,$$
and then 
$$\int_{0}^{\lambda_0}  (1 + \Re(u_{\lambda}) ) d \lambda = \int_{0}^{\lambda_0}  (1 + \Im(u_{\lambda}) ) d \lambda =  \lambda_0,$$
for all $\lambda_0 > 0$, i.e. the two measures on $\mathbb{R}^*_+$ with densities $1 + \Re(u_{\lambda})$ and $1 + \Im (u_{\lambda})$ are equal to the Lebesgue measure. 
We deduce that for almost every $\lambda > 0$, $\Re(u_{\lambda}) = \Im (u_{\lambda})= 0$, which contradicts the fact that $|u_{\lambda}| = 1$ for all $\lambda \in \mathbb{R}$. Hence, 
$(x_n)_{n \geq 1}$ is in the interval $[-A,A]$ for some $A > 0$. Since  it converges modulo $3A$, it converges. 
\end{proof}
Now, let $\{a^{(i)} \}_{i \ge 1}$ be a sequence satisfying the assumptions of Proposition \ref{necessity}. Taking the particular case $K = 1$, we deduce the convergence in law of 
$\sum_{j=1}^N {a^{(N)}_j} \alpha^N_j$ when $N$ goes to infinity. Equivalently, see for example formula (9.5) in \cite{Kingman}, 
$$\frac{\sum_{j=1}^N {a^{(N)}_j} \Gamma^j_{\theta,1}}{\sum_{j=1}^N \Gamma^j_{\theta,1}}$$
converges in law, where $(\Gamma^j_{\theta,1})_{j \geq 1}$ are i.i.d. Gamma random variables with parameters $\theta$ and $1$, namely with probability density:
\begin{align*}
t\mapsto \frac{1}{\Gamma(\theta)}t^{\theta-1}e^{-t}\mathbf{1}_{t>0}.
\end{align*}
By the law of large numbers and Slutsky's lemma, 
$\sum_{j=1}^N (a^{(N)}_j/N) \Gamma^j_{\theta,1}$ also converges in distribution, which means that 
$$\prod_{j=1}^N  ( 1 - \i \lambda a^{(N)}_j/N )^{-\theta}$$
converges when $N \rightarrow \infty$, for all $\lambda \in \mathbb{R}$, the limit being continuous in $\lambda$.

Taking the squared modulus, we deduce that 
$$ \left(\prod_{j=1}^N  ( 1 +  \lambda^2 (a^{(N)}_j/N)^2) \right)^{-\theta} \leq \left( 1 +   \lambda^2 \sum_{j=1}^N (a^{(N)}_j/N)^2\right)^{-\theta}$$
does not tend to zero for all $\lambda$, which implies that $\sum_{j=1}^N (a^{(N)}_j/N)^2$ is bounded by some constant $K^2 > 0$. Expanding the power $p/2$ for integers $p \geq 1$, 
we deduce 
$$\sum_{j=1}^N |a^{(N)}_j/N|^{p} \leq K^p$$
for $p \geq 2$ even. This  remains true for all reals $p \geq 2$ by using H\"older inequality. 
Taking the logarithm of the convergence of the Fourier transform, we deduce that either 
$$ - \theta \sum_{j=1}^N \log ( 1 - \i \lambda a^{(N)}_j/N) $$
converges in $\mathbb{R} / 2 \i \pi \mathbb{Z}$, or its real part goes to $-\infty$. 
Expanding the logarithm, we get that for $|\lambda| \leq 1/2K$, 
$$ \theta \sum_{p = 1}^{\infty} \frac{(i \lambda)^p}{p} S^{(N)}_p$$
converges in  $\mathbb{R} / 2 \i \pi \mathbb{Z}$, or has  real part going to $-\infty$, for 
$$S^{(N)}_p := \sum_{j=1}^N (a^{(N)}_j/N)^p.$$ 
The real part cannot go to infinity, because $\lambda^p |S_p^{(N)}| \leq (\lambda  K)^{p} \leq 2^{-p} $ for all $p \geq 2$. Hence, we have convergence. 
We also check (again by using that $|\lambda| \leq 1/2K$), that 
$$ \theta \sum_{p = 1}^{\infty} \frac{(\i \lambda)^p}{p} S^{(N)}_p 
= \theta \i \lambda S^{(N)}_1 + O(\lambda^2 K^2),$$
 and then, taking $\lambda = \lambda_0/q$, $q \geq 1$ integer, $|\lambda_0| \leq 1/2K$, and multiplying by $q$,  
$$\theta  \i \lambda_0 S^{(N)}_1 +  O (\lambda_0^2 q^{-1} K^2)$$
converges modulo $2 \i \pi q$, and a fortiori modulo $2 \i \pi$, for all $q \geq 1$ integer. 
The limit points of $ \theta \lambda_0 S^{(N)}_1$ modulo $2 \pi$ are then all in an interval of length $O(\lambda_0^2 q^{-1} K^2)$ for all $q$, and then 
$ \theta \lambda_0 S^{(N)}_1$ converges modulo $2 \pi$. Since $\lambda_0$ is an arbitrary small number, $S^{(N)}_1$ converges modulo any non-zero real number, when $N$ goes to infinity. 
By Lemma \ref{modulo},  $S^{(N)}_1$ converges when $N \rightarrow \infty$. 
We deduce that 
$$ \theta \sum_{p = 2}^{\infty} \frac{(\i \lambda)^p}{p} S^{(N)}_p = \frac{(i \lambda)^2}{2} S^{(N)}_2 +  O (|\lambda|^3 K^3)
$$
converges modulo $2 \i \pi$. Taking the real part, dividing by $\lambda^2$, and letting $\lambda \rightarrow 0$, we deduce that $S_2^{(N)}$ converges when $N$ goes to infinity. 
We now have the convergence modulo $2 \i \pi$ of 
$$ \theta \sum_{p = 3}^{\infty} \frac{(i \lambda)^p}{p} S^{(N)}_p = \frac{(\i \lambda)^3}{3} S^{(N)}_3 + O(\lambda^4 K^4)
$$
which shows that with the notation above, 
$$\frac{(\i \lambda_0)^3}{3} S^{(N)}_3 +  O (\lambda_0^4 q^{-1} K^4)
$$
converges modulo $2 \i \pi q^3$ and then modulo $2 \i \pi$, which implies  $S^{(N)}_3$ converges modulo any non-zero real number, and then converges when $N$ goes to infinity. 
Repeating this procedure, we deduce that  $S^{(N)}_p$ converges  for all integers $p \geq 1$. From 
Proposition \ref{OVsumsofpowers}, the O-V conditions are satisfied. 

\section{Sufficiency of the Olshanski-Vershik conditions for convergence of orbital measures} \label{sectionsufficiency}
In this section, we will prove Proposition \ref{sufficiency}. This will be done in several steps. 
\subsection{Convergence in law of the bottom entry} 
We will first show the convergence in law of the bottom entry.  This is a consequence of the following result: 
\begin{prop} \label{sufficiencyuniformstrip} 
Let $\theta \in (0, \infty)$, and let $\{a^{(i)} \}_{i \ge 1}$ be a sequence such that $a^{(i)} \in W^i$ for all $i \geq 1$, and such that the O-V conditions are satisfied, for a limit point $\omega \in \Omega$. 
Let $d^{(N)}_1$ be the bottom entry (or equivalently, the first diagonal entry) of an orbital family of interlacing arrays with top row $a^{(N)}$ and parameter $\theta$. Then, 
the Laplace transform $y \mapsto \mathbb{E} [ e^{y d^{(N)}_1} ]$  is well-defined everywhere and converges to $y \mapsto \mathfrak{F}_{\omega,\theta}( - \i y)$, uniformly in compact sets of the strip 
\begin{align*}
\mathsf{S}_{\omega,\theta}=\bigg\{y \in \mathbb{C} \bigg | |\Re (y)|<\frac{\theta}{\max \{\alpha_1^+,\alpha_1^-\}}\bigg\}.
\end{align*}
\end{prop}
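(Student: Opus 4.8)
By Proposition \ref{orbitalDirichlet}, the bottom entry $d^{(N)}_1$ has the law of $\sum_{j=1}^N \alpha^{(N)}_j a^{(N)}_j$ with $(\alpha^{(N)}_j)$ Dirichlet$(\theta,\dots,\theta)$; equivalently, using the Gamma representation as in the proof of Proposition \ref{necessity}, it has the law of $\big(\sum_{j=1}^N \Gamma^j_{\theta,1}\big)^{-1}\sum_{j=1}^N a^{(N)}_j \Gamma^j_{\theta,1}$. First I would pass, via the law of large numbers and Slutsky's lemma, to the variable $\widetilde d^{(N)}_1 := \tfrac1N\sum_{j=1}^N a^{(N)}_j \Gamma^j_{\theta,1}$, whose Laplace transform factorizes: for $y$ with $|\Re(y)| < \theta/\max\{\alpha_1^+,\alpha_1^-\}$ one has $\mathbb{E}[e^{y \widetilde d^{(N)}_1}] = \prod_{j=1}^N (1 - y a^{(N)}_j/N)^{-\theta}$, an absolutely convergent product since $\sup_j |a^{(N)}_j/N| \to \max\{\alpha_1^+,\alpha_1^-\}$ and $\sum_j (a^{(N)}_j/N)^2$ is bounded (both from the O-V conditions). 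One must, of course, argue carefully that the normalization $\big(\tfrac1N\sum \Gamma^j_{\theta,1}\big)^{-1} \to 1$ a.s. does not spoil the convergence of Laplace transforms on a complex strip — this is where Slutsky plus a uniform integrability / local-uniform-boundedness argument on compact subsets of $\mathsf{S}_{\omega,\theta}$ enters.

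The core is then to show $\prod_{j=1}^N (1 - y a^{(N)}_j/N)^{-\theta} \to \mathfrak{F}_{\omega,\theta}(-\i y)$ locally uniformly on $\mathsf{S}_{\omega,\theta}$. Taking logarithms, write
\begin{align*}
-\theta \sum_{j=1}^N \log\!\big(1 - y\, a^{(N)}_j/N\big)
= \theta \sum_{j=1}^N \Big( y\, a^{(N)}_j/N + \tfrac12 (y\, a^{(N)}_j/N)^2 \Big)
+ \theta \sum_{j=1}^N R\!\big(y\, a^{(N)}_j/N\big),
\end{align*}
where $R(w) = -\log(1-w) - w - w^2/2 = \sum_{p\ge 3} w^p/p$. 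The first sum is $\theta\big(y\, \gamma_1^{(N)} + \tfrac12 y^2 \delta^{(N)}\big) \to \theta\big(y\gamma_1 + \tfrac12 y^2 \delta\big)$ by the O-V conditions. For the remainder sum I would split the indices into the ``large'' ones — those $j$ with $|a^{(N)}_j|/N$ above a small threshold $\varepsilon$, finitely many of which survive, contributing $\sum_{k}\big(R(y\alpha_k^+) + R(-y\alpha_k^-)\big)$ in the limit — and the ``small'' ones, where $|R(w)| \le C|w|^3 \le C\varepsilon\, |y|\, |w|^2$ can be dominated by $C\varepsilon |y| \sum_j (a^{(N)}_j/N)^2$, uniformly small on compacts. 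Letting $\varepsilon \to 0$ and matching terms against $\log \mathfrak{F}_{\omega,\theta}(-\i y) = -\tfrac{\gamma_2}{2\theta}(-\i y)^2 \cdot(\text{sign bookkeeping}) + \dots$ — i.e. $y\gamma_1 + \tfrac{\gamma_2}{2\theta}y^2 + \sum_k\big(\log(1-y\alpha_k^+/\theta)^{-\theta} + y\alpha_k^+ \cdot \text{stuff} + \dots\big)$ after substituting $x = -\i y$ — recovers the claimed product formula, using $\gamma_2 = \delta - \sum(\alpha_k^+)^2 - \sum(\alpha_k^-)^2$ to absorb the $\tfrac12 y^2$ terms from the large indices into the Gaussian factor. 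The local uniformity comes for free from the domination estimates plus Vitali/Montel once pointwise convergence is established, since all the Laplace transforms are holomorphic and locally uniformly bounded on $\mathsf{S}_{\omega,\theta}$.

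**Main obstacle.** The delicate point is not the algebra of matching terms — that is Newton's-identities bookkeeping parallel to what was done for $\theta=\infty$ — but rather the control of the \emph{interaction} between the Dirichlet normalization and the complex-analytic convergence: one needs that dividing by $\tfrac1N\sum_j\Gamma^j_{\theta,1}$ (a random scalar tending to $1$ a.s.) translates into convergence of $\mathbb{E}[e^{y d^{(N)}_1}]$ on a full complex strip, not merely convergence in distribution on the real line. I would handle this by first establishing the convergence of $\mathbb{E}[e^{\i t d^{(N)}_1}]$ for real $t$ via Slutsky, then upgrading to the strip using the a priori bound $|\mathbb{E}[e^{y d^{(N)}_1}]| \le \mathbb{E}[e^{\Re(y) d^{(N)}_1}] \le \prod_j(1 - |\Re(y)|\,|a^{(N)}_j|/N)^{-\theta}$ — finite and locally bounded on $\mathsf{S}_{\omega,\theta}$ by the O-V conditions — together with Vitali's theorem, which promotes pointwise convergence on the real segment to locally uniform convergence on the strip. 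A secondary technical nuisance is justifying term-by-term passage to the limit in the infinite product defining $\mathfrak{F}_{\omega,\theta}$, handled by the splitting into finitely many large indices plus an $\ell^2$-controlled tail exactly as above.
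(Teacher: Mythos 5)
The overall architecture you describe (reduce to the Gamma-weighted sum via Proposition \ref{orbitalDirichlet}, prove pointwise convergence of the log-Laplace transform, then upgrade to locally uniform convergence on the strip via Vitali/Montel plus an a priori bound) is the same as the paper's, and your pointwise analysis --- expand $\log\prod_j(1-y\,a^{(N)}_j/(N\theta))^{-\theta}$ in powers, use the O-V conditions on $S_p^{(N)}$, and split the tail into finitely many ``large'' indices plus an $\ell^2$-controlled remainder --- is a perfectly acceptable cosmetic variant of the paper's cumulant computation. (Note the normalization: the relevant auxiliary variable is $(N\theta)^{-1}\sum_j a^{(N)}_j\Gamma^j_{\theta,1}$, not $N^{-1}\sum_j a^{(N)}_j\Gamma^j_{\theta,1}$; otherwise the factors don't match those of $\mathfrak{F}_{\omega,\theta}(-\i y)$.)

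However, there is a genuine gap in the step you rightly identify as the main obstacle. Your a priori bound
\begin{equation*}
\mathbb{E}\bigl[e^{\Re(y)\,d^{(N)}_1}\bigr]\;\le\;\prod_{j=1}^{N}\bigl(1-|\Re(y)|\,|a^{(N)}_j|/N\bigr)^{-\theta}
\end{equation*}
is not justified and, as far as I can see, not true. The right-hand side is (essentially) the Laplace transform of the Gamma-weighted quantity $\frac{1}{N}\sum_j a^{(N)}_j\Gamma^j_{\theta,1}$ (whose factorization you correctly derived), \emph{not} of the Dirichlet-weighted $d^{(N)}_1=\sum_j\alpha^{(N)}_j a^{(N)}_j$; the two differ by the random scalar $\bigl(\frac1N\sum_j\Gamma^j_{\theta,1}\bigr)^{-1}$, and there is no pointwise domination between them. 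The trivial bound $|d^{(N)}_1|\le\max_j|a^{(N)}_j|$ gives $\mathbb{E}[e^{\Re(y)d^{(N)}_1}]\le e^{|\Re(y)|\max_j|a^{(N)}_j|}$, but $\max_j|a^{(N)}_j|\asymp N\max\{\alpha_1^+,\alpha_1^-\}$ grows with $N$, so this is useless. What the paper does instead is a beta-gamma algebra trick: since $\sum_j a^{(N)}_j\Gamma^j_{\theta,1}\stackrel{d}{=}\Gamma_{N\theta,1}\cdot d^{(N)}_1$ with $\Gamma_{N\theta,1}$ independent of $d^{(N)}_1$, the uniform bound on $\mathbb{E}[e^{\pm y(N\theta)^{-1}\sum_j a^{(N)}_j\Gamma^j_{\theta,1}}]$ transfers to $\sup_N\mathbb{E}[\cosh\bigl(y\,d^{(N)}_1\,\Gamma_{N\theta,1}/(N\theta)\bigr)]<\infty$; since $\mathbb{P}(\Gamma_{N\theta,1}\ge N\theta)$ is bounded away from $0$ and $\cosh$ is even and increasing on $\mathbb{R}_+$, one can drop the factor $\Gamma_{N\theta,1}/(N\theta)$ and conclude $\sup_N\mathbb{E}[\cosh(y\,d^{(N)}_1)]<\infty$, which is the uniform local bound Montel needs. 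This transfer step is the genuinely nontrivial idea your proposal is missing; without it, you have uniform control on the Gamma-weighted Laplace transforms but not on the Laplace transforms of $d^{(N)}_1$ itself.
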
 
\begin{proof}
If we take the notation of the previous section, and if for $p \geq 1$, we denote by $\kappa_p$  the cumulant of order $p$, then we get 
 $$\kappa_p \left( (N \theta)^{-1} \sum_{j=1}^N a^{(N)}_j \Gamma^j_{\theta,1} \right) =   \theta^{-p} S^{(N)}_p \kappa_p ( \Gamma^1_{\theta,1} ).$$
We deduce, from the O-V conditions, 
$$\kappa_1  \left( (N \theta)^{-1} \sum_{j=1}^N a^{(N)}_j \Gamma^j_{\theta,1} \right) \underset{N \rightarrow \infty}{\longrightarrow} \gamma_1,$$
$$\kappa_2  \left( (N \theta)^{-1} \sum_{j=1}^N a^{(N)}_j \Gamma^j_{\theta,1} \right) \underset{N \rightarrow \infty}{\longrightarrow} \theta^{-1} \delta  = \theta^{-1} \left( \gamma_2 + \sum_{i} (\alpha^+_i)^2 +  \sum_{i} (\alpha^-_i)^2 \right)$$
and by using \eqref{sump}, 
$$\kappa_p  \left( (N \theta)^{-1} \sum_{j=1}^N a^{(N)}_j \Gamma^j_{\theta,1} \right) \underset{N \rightarrow \infty}{\longrightarrow}  (p-1)! \theta^{1-p} \left(  \sum_{i} (\alpha^+_i)^p +  \sum_{i} (\alpha^-_i)^p \right)$$
for $p \geq 3$. 
On the other hand, for $x \in \mathbb{C}$ sufficiently close to zero, we get, by expanding the logarithms of the factors of $\mathfrak{F}_{\omega,\theta}$: 
$$\mathfrak{F}_{\omega,\theta}(x) = \exp \left( \i \gamma_1 x - \frac{\gamma_2}{2 \theta} x^2 + \sum_{p \geq 2} \theta^{1-p}  \left(  \sum_{i} (\alpha^+_i)^p +  \sum_{i} (\alpha^-_i)^p \right) \frac{(\i x)^p}{p} \right).$$
This shows that the right-hand side of the convergence statements just above correspond to the cumulants of a random variable $X$ with characteristic function $\mathfrak{F}_{\omega,\theta}$.
Hence, the moments of $(N \theta)^{-1} \sum_{j=1}^N a^{(N)}_j \Gamma^j_{\theta,1} $ converge to the corresponding moments  of $X$. 
Moreover, since the series giving $\mathfrak{F}_{\omega,\theta}$ converges in the neighborhood of zero, $X$ has some exponential moments, and then it is characterized by its moments, which shows that 
$(N \theta)^{-1} \sum_{j=1}^N a^{(N)}_j \Gamma^j_{\theta,1} $ tends to $X$ in distribution. 
Since $(N \theta)^{-1}  \sum_{j=1}^N \Gamma^j_{\theta,1} $ converges a.s. to $1$ by the law of large numbers, Proposition \ref{orbitalDirichlet} and Slutsky's lemma  imply that $d^{(N)}_1$ converges to $X$ in distribution, 
which implies the convergence of $\mathbb{E} [ e^{y d^{(N)}_1} ]$ to $\mathfrak{F}_{\omega,\theta}( - \i y)$ for each $y \in \i \mathbb{R}$. 

Now, $y \mapsto \mathbb{E} [ e^{y d^{(N)}_1} ]$  is a well-defined and entire function for each $N \geq 1$, since $d^{(N)}_1$ is a.s. in the interval between the two extremal entries of $a^{(N)}$. 
Moreover, if we fix $y$ and $y_0$ such that $0 < |y| < y_0 < \frac{\theta}{\max \{\alpha_1^+,\alpha_1^-\}}$, then by the O-V conditions, we have
  $ |y a^{(N)}_j/(N \theta)|  <|y|/y_0 < 1$ as soon as $N$ is large enough, which implies 
$$\mathbb{E} [ e^{y \Gamma^j_{\theta,1} a^{(N)}_j/(N \theta)    } ] = \left( 1 - y a^{(N)}_j/(N \theta) \right)^{-\theta} = \exp \left( y a^{(N)}_j/N+ O_{\theta, y, y_0} \left( \left( a^{(N)}_j/N \right)^2 \right)  \right),$$
and then
 $$\mathbb{E} [ e^{y  \sum_{j=1}^{N} \Gamma^j_{\theta,1}  a^{(N)}_j/(N \theta)   } ]  = \exp \left( y S^{(N)}_1 + O_{\theta, y,y_0} ( S^{(N)}_2 ) \right), $$
 which shows, because of the O-V conditions, that 
 $$\sup_{N \geq 1} \mathbb{E} [ e^{y  \sum_{j=1}^{N} \Gamma^j_{\theta,1}  a^{(N)}_j/(N \theta)   } ]  < \infty.$$
By Proposition \ref{orbitalDirichlet} and the beta-gamma algebra, we deduce that 
 $$\sup_{N \geq 1} \mathbb{E} [ e^{y d^{(N)}_1  (\Gamma^j_{N\theta,1}/ N \theta)} ] < \infty,$$
 and then, applying this result to $y$ and $-y$, 
 $$\sup_{N \geq 1} \mathbb{E} [ \cosh (y d^{(N)}_1  (\Gamma^j_{N\theta,1}/ N \theta)) ] < \infty,$$
 where $\Gamma^j_{N\theta,1}$ is a gamma variable of parameters $N \theta$ and $1$, independent of $d^{(N)}_1$. Since the probability that $\Gamma^j_{N\theta,1}$ is larger that  $N \theta$ is bounded away from zero 
 (it is never zero and it tends to $1/2$ by the central limit theorem), we deduce 
 $$\sup_{N \geq 1} \mathbb{E} [ \cosh (y d^{(N)}_1)   ] < \infty.$$
Since $y$ can be any element strictly smaller than $ \frac{\theta}{\max \{\alpha_1^+,\alpha_1^-\}}$, we deduce that $y \mapsto \mathbb{E} [ e^{y d^{(N)}_1} ]$ is bounded in compact sets of $\mathsf{S}_{\omega,\theta}$, independently 
of $N$. By Montel's theorem, each subsequence of $(y \mapsto \mathbb{E} [ e^{y d^{(N)}_1} ])_{N \geq 1}$ has a sub-subsequence converging uniformly on compact sets of $\mathsf{S}_{\omega,\theta}$
 towards some limiting holomorphic function. From the point-wise convergence on the imaginary axis proven above, the limit is necessarily equal to $y \mapsto \mathfrak{F}_{\omega,\infty}( - \i y)$ on 
 the imaginary axis, and then on all $\mathsf{S}_{\omega,\theta}$ by analytic continuation. This is enough to prove Proposition \ref{sufficiencyuniformstrip}.
 
\end{proof}

\subsection{Multivariate Bessel functions, Dunkl transforms and the orbital beta process}
In order to prove the convergence of the joint distribution of the $K$ first rows for all $K \geq 1$, we will need 
some background and results on the multivariate Bessel functions and Dunkl transforms whose connections with the orbital beta process will be key to our analysis. The references from which we draw the facts stated here are \cite{DeJeu}, \cite{Opdam}, \cite{Dunkl}, \cite{DunklProcesses}, \cite{RoslerRadial} and the exposition in \cite{CuencaOrbital} that we follow at places.
In this section, we fix a parameter $\theta \in (0, \infty)$. 
\subsubsection{Multivariate Bessel functions}
We define the Dunkl operators of type A:
\begin{align*}
T^{\theta}_i=\frac{\partial}{\partial x_i}+\theta \sum_{j\ne i}\frac{1}{x_j-x_i}(1-d_{ij}), \ 1\le i \le N,
\end{align*}
where $d_{ij}$ acts by permuting the variables $x_i$ and $x_j$.
These operators commute for any $i,j$: $T^{\theta}_iT^{\theta}_j=T^{\theta}_jT^{\theta}_i$, see \cite{Dunkl}.

 Now, for any fixed
$a_1,\dots,a_N \in \mathbb{C}$ consider the following system of differential equations (unambiguously defined by commutativity of the $T_i^{\theta}$'s):
\begin{align}\label{DifferentialSystem}
P\left(T^{\theta}_1,\dots, T^{\theta}_N\right)\mathsf{F}(x)=P\left(a_1,\dots, a_N\right)\mathsf{F}(x), \ \textnormal{ for all symmetric polynomials } P.
\end{align}

Then, we have the following theorem, see \cite{Opdam}, \cite{DeJeu}:
\begin{thm}\label{ExistenceBesselTheorem}
For any fixed $a_1,\dots,a_N \in \mathbb{C}$, there exists a unique solution $\mathsf{F}(y_1,\dots, y_N)$ symmetric with respect to the variables $y_1,\dots,y_N$ and normalized by $\mathsf{F}(0,\dots,0)=1$. 
For $a = (a_1,\dots,a_N )$ and  $y = (y_1,\dots,y_N )$, we denote this solution by $\mathfrak{B}_a(y;\theta)$ and call it a multivariate Bessel function. 

Moreover, the map $(a,y)\mapsto \mathfrak{B}_a\left(y;\theta\right)$ admits an extension to a holomorphic function in $2N$ variables and $\mathfrak{B}_{(a_1,\dots, a_N)}(y_1,\dots, y_N)$ is also symmetric with respect to the variables $a_1, \dots, a_N$.
\end{thm}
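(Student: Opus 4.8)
The plan is to reduce the statement to the construction of the \emph{non-symmetric Dunkl kernel} and then to symmetrize it. Fix $\theta>0$. Recall Dunkl's intertwining operator $V_\theta$: it is the unique linear operator on the space $\mathcal{P}$ of polynomials in $x_1,\dots,x_N$ which preserves the degree, fixes the constants ($V_\theta 1=1$), and intertwines the Dunkl operators with the ordinary partials, $T^\theta_i V_\theta=V_\theta\,\partial/\partial x_i$ for all $i$. Its existence and uniqueness (for $\theta$ outside a discrete set of negative rationals, in particular for all $\theta>0$) is obtained by solving, degree by degree, the linear systems coming from these intertwining relations, the solvability at each degree resting on the non-degeneracy on each homogeneous component $\mathcal{P}_d$ of the bilinear pairing $[p,q]_\theta:=\big(p(T^\theta_1,\dots,T^\theta_N)\,q\big)(0)$. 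One then \emph{defines}
\[
E_\theta(x,y):=V_\theta^{(x)}\big(e^{\langle\,\cdot\,,\,y\rangle}\big)(x),
\]
i.e. $V_\theta$ applied in the variable $x$ to the entire function $x\mapsto e^{x_1y_1+\cdots+x_Ny_N}$. Since $e^{\langle x,y\rangle}=\sum_{d\ge0}\langle x,y\rangle^d/d!$ and $V_\theta$ is degree-preserving with the norm of $V_\theta|_{\mathcal{P}_d}$ growing at most geometrically in $d$, this series converges absolutely and locally uniformly, so $E_\theta$ is entire on $\mathbb{C}^{2N}$, with $E_\theta(0,y)=1$ and $T^\theta_i E_\theta(\,\cdot\,,y)=y_iE_\theta(\,\cdot\,,y)$ for every $i$. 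The same graded argument gives \emph{uniqueness}: $E_\theta(\,\cdot\,,y)$ is the only formal power series $f$ with $f(0)=1$ and $T^\theta_if=y_if$ for all $i$, because the difference of two such, at its lowest nonzero homogeneous degree $d\ge1$, would be a nonzero element of $\mathcal{P}_d$ annihilated by all $T^\theta_i$, contradicting non-degeneracy of $[\cdot,\cdot]_\theta$.

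From $E_\theta$ one reads off all the required properties. First, $E_\theta(wx,wy)=E_\theta(x,y)$ for every permutation $w$ (both sides solve the same system in $x$ and agree at $x=0$, using $wT^\theta_iw^{-1}=T^\theta_{w(i)}$), and $E_\theta(x,y)=E_\theta(y,x)$ (the pairing $[\cdot,\cdot]_\theta$ is symmetric, which makes $V_\theta$ self-adjoint for it and forces this symmetry; alternatively, $y\mapsto E_\theta(y,x)$ solves in $y$ the same system as $y\mapsto E_\theta(x,y)$, so one concludes by uniqueness). Now set
\[
\mathfrak{B}_{a}(y;\theta):=\frac{1}{N!}\sum_{w\in S_N}E_\theta(a,wy),
\]
$S_N$ being the symmetric group acting by permuting coordinates. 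This is manifestly symmetric in $y$, equals $1$ at $y=0$, and inherits holomorphy in $(a,y)\in\mathbb{C}^{2N}$ from $E_\theta$; using $E_\theta(a,wy)=E_\theta(wy,a)=E_\theta(y,w^{-1}a)$ and summing over $w$, it is also symmetric in $a$. It solves the system \eqref{DifferentialSystem}: by $E_\theta(a,y)=E_\theta(y,a)$ one has $T^\theta_iE_\theta(a,\cdot)=a_iE_\theta(a,\cdot)$, hence $P(T^\theta)E_\theta(a,\cdot)=P(a)E_\theta(a,\cdot)$ for every polynomial $P$; and when $P$ is symmetric the operator $P(T^\theta_1,\dots,T^\theta_N)$ commutes with the $S_N$-action on the $y$ variables (since $R_w^{-1}T^\theta_iR_w=T^\theta_{w^{-1}(i)}$), so applying it term by term to the average gives $P(T^\theta)\mathfrak{B}_a(\cdot;\theta)=P(a)\mathfrak{B}_a(\cdot;\theta)$. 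Thus $\mathfrak{B}_a(\cdot;\theta)$ is a solution with all the stated properties.

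It remains to prove \emph{uniqueness} of the symmetric solution, which I expect to be the main obstacle. Let $\mathsf{G}$ be another analytic solution of \eqref{DifferentialSystem}, symmetric in $y$ with $\mathsf{G}(0)=1$, and put $\mathsf{H}=\mathsf{G}-\mathfrak{B}_a(\cdot;\theta)$; then $\mathsf{H}$ is symmetric, analytic, vanishes at $0$, and satisfies $p_k(T^\theta)\mathsf{H}=p_k(a)\mathsf{H}$ for $k=1,\dots,N$, where $p_1,\dots,p_N$ are the power sums (these generate the symmetric polynomials, so the full system holds), and one must show $\mathsf{H}\equiv0$. The clean route is to invoke that the holonomic system \eqref{DifferentialSystem} has, on the regular set $\{x:x_i\ne x_j\}$, a solution space of dimension $N!$ whose subspace of $S_N$-invariants is one-dimensional; since $\mathfrak{B}_a(\cdot;\theta)$ is a nonzero invariant solution, every symmetric solution is a scalar multiple of it, and the normalization at $0$ pins the scalar. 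Making this rigorous is exactly the work done in \cite{Opdam} and \cite{DeJeu}: one either performs the rank/characteristic-variety computation for the system, or argues more by hand through the coupled relations $p_k(T^\theta)\mathsf{H}_{m+k}=p_k(a)\mathsf{H}_m$ between the homogeneous parts of $\mathsf{H}=\sum_{d\ge1}\mathsf{H}_d$, again leaning on non-degeneracy of $[\cdot,\cdot]_\theta$. Note that a naive lowest-degree argument does not suffice here, because $S_N$-invariant $\theta$-harmonic polynomials of positive degree do exist, so the coupling to the eigenvalue parameter $a$ must genuinely be exploited. Finally, the holomorphic extension to $\mathbb{C}^{2N}$ and the symmetry in $a_1,\dots,a_N$ are immediate from the corresponding properties of $E_\theta$ established above.
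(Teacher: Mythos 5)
The paper does not prove this theorem itself: it is stated as a citation to \cite{Opdam} and \cite{DeJeu}, and your proposal is a faithful reconstruction of the standard argument carried out in those references (build the intertwining operator $V_\theta$, define the Dunkl kernel $E_\theta$, then symmetrize over $S_N$). All of the construction, the holomorphy in $2N$ variables, the symmetry in $y$ and in $a$, and the verification that the symmetrization solves the system, are correct.

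Your hedge about uniqueness, however, is unnecessary, and the assertion that ``a naive lowest-degree argument does not suffice because $S_N$-invariant $\theta$-harmonic polynomials of positive degree exist'' is not right. The lowest-degree argument does work, precisely because the competing solution is assumed $S_N$-invariant. Suppose $\mathsf{H}$ is a symmetric power-series solution of \eqref{DifferentialSystem} with $\mathsf{H}(0)=0$, and let $\mathsf{H}_{d_0}$ with $d_0\geq 1$ be its lowest nonvanishing homogeneous component. For any symmetric homogeneous $P$ of degree $d_0$, comparing degree-zero parts in $P(T^\theta)\mathsf{H}=P(a)\mathsf{H}$ gives $P(T^\theta)\mathsf{H}_{d_0}=0$. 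Now use that the pairing $[p,q]_\theta:=\big(p(T^\theta)q\big)(0)$ is $S_N$-invariant (from $wT^\theta_iw^{-1}=T^\theta_{w(i)}$ and $w\cdot 0=0$) and that $\mathsf{H}_{d_0}$ is $S_N$-invariant: for arbitrary $p\in\mathcal{P}_{d_0}$,
\[
[p,\mathsf{H}_{d_0}]_\theta=\tfrac{1}{N!}\sum_{w\in S_N}[wp,w\mathsf{H}_{d_0}]_\theta=[p^{\mathrm{sym}},\mathsf{H}_{d_0}]_\theta=0,
\]
where $p^{\mathrm{sym}}$ is the symmetrization of $p$, a symmetric polynomial of degree $d_0$. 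Non-degeneracy of $[\cdot,\cdot]_\theta$ on $\mathcal{P}_{d_0}$ then forces $\mathsf{H}_{d_0}=0$, a contradiction. The existence of $S_N$-invariant Dunkl harmonics (polynomials killed by $\Delta_\theta=\sum_i(T^\theta_i)^2$) is beside the point: the argument exploits annihilation by every symmetric operator of degree $d_0$, which is a much stronger condition than annihilation by the Laplacian alone. So you do not need the holonomic rank or characteristic-variety computation to pin down the symmetric solution; uniqueness follows at the power-series level by exactly the same mechanism you used for $E_\theta$, upgraded by the $W$-invariance of the pairing.
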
 

We also record the following useful property of $\mathfrak{B}_a(y;\theta)$, see Corollary 3.7 in \cite{CuencaOrbital}. 

\begin{lem}\label{MultiplicativitySymmetryLemma}
For any complex numbers $a_1,\dots,a_N,y_1,\dots,y_N,c \in \mathbb{C}$ we have:
\begin{align*}
\mathfrak{B}_{(ca_1,\dots,ca_N)}(y_1,\dots,y_N;\theta)=\mathfrak{B}_{(a_1,\dots,a_N)}(cy_1,\dots,cy_N;\theta).
\end{align*}
\end{lem}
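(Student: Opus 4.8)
The plan is to exploit the uniqueness characterization of $\mathfrak{B}_a(\cdot\,;\theta)$ from Theorem \ref{ExistenceBesselTheorem} together with the homogeneity of the Dunkl operators $T_i^\theta$. First I would introduce the rescaling operator $(S_c f)(y_1,\dots,y_N) = f(cy_1,\dots,cy_N)$ and establish, for every $1\le i\le N$ and every $c\in\mathbb{C}$, the commutation relation $T_i^\theta \circ S_c = c\,(S_c \circ T_i^\theta)$. For the $\partial/\partial y_i$ part this is just the chain rule; for the part $\theta\sum_{j\ne i}\frac{1}{y_j-y_i}(1-d_{ij})$ one uses $\frac{1}{y_j-y_i}=\frac{c}{cy_j-cy_i}$ together with the obvious identity $d_{ij}\circ S_c = S_c\circ d_{ij}$ (permutation and scaling of the arguments commute) to see that exactly one factor of $c$ comes out. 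Since the two parts scale the same way, so does their sum; equivalently, each $T_i^\theta$ is homogeneous of degree $-1$. The only thing requiring a little care here is this bookkeeping of the single power of $c$ in the rational part.

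Next I would promote the relation to symmetric polynomials. Because the $T_i^\theta$ commute, for $P$ symmetric and homogeneous of degree $d$ the operator $P(T_1^\theta,\dots,T_N^\theta)$ is a linear combination of monomials $T_1^{\alpha_1}\cdots T_N^{\alpha_N}$ with $|\alpha|=d$, so iterating the previous identity gives $P(T_1^\theta,\dots,T_N^\theta)\circ S_c = c^d\,\bigl(S_c\circ P(T_1^\theta,\dots,T_N^\theta)\bigr)$. An arbitrary symmetric polynomial decomposes into homogeneous components, each of which is again symmetric, and both sides of the defining system \eqref{DifferentialSystem} are linear in $P$; hence it suffices to work with homogeneous $P$, which is a harmless reduction.

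Then I would set $G := S_c\bigl(\mathfrak{B}_a(\cdot\,;\theta)\bigr)$, that is, $G(y) = \mathfrak{B}_{(a_1,\dots,a_N)}(cy_1,\dots,cy_N;\theta)$, and verify that $G$ meets the three conditions characterizing $\mathfrak{B}_{(ca_1,\dots,ca_N)}(\cdot\,;\theta)$ in Theorem \ref{ExistenceBesselTheorem}. Symmetry of $G$ in $y_1,\dots,y_N$ and the normalization $G(0,\dots,0)=1$ are immediate from the corresponding properties of $\mathfrak{B}_a(\cdot\,;\theta)$. For the eigenfunction property, take $P$ symmetric and homogeneous of degree $d$ and compute, using the commutation relation and then \eqref{DifferentialSystem} for $\mathfrak{B}_a(\cdot\,;\theta)$,
\begin{align*}
P(T_1^\theta,\dots,T_N^\theta)\,G &= c^d\, S_c\bigl( P(T_1^\theta,\dots,T_N^\theta)\,\mathfrak{B}_a(\cdot\,;\theta) \bigr) \\
&= c^d P(a_1,\dots,a_N)\, G = P(ca_1,\dots,ca_N)\, G,
\end{align*}
where the last equality is the homogeneity identity $c^d P(a_1,\dots,a_N)=P(ca_1,\dots,ca_N)$. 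By the uniqueness part of Theorem \ref{ExistenceBesselTheorem}, $G = \mathfrak{B}_{(ca_1,\dots,ca_N)}(\cdot\,;\theta)$, which is exactly the claimed identity. The degenerate case $c=0$ can be handled directly (both sides equal $1$, since $\mathfrak{B}_a(0,\dots,0;\theta)=1$ and $\mathfrak{B}_{(0,\dots,0)}(\cdot\,;\theta)\equiv 1$), or simply obtained from the $c\ne 0$ case via the holomorphy of both sides in $c$ guaranteed by Theorem \ref{ExistenceBesselTheorem}.

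I do not expect a genuine obstacle: this is a clean uniqueness argument, and the only mildly technical points are checking that the rational part of each Dunkl operator carries exactly one power of $c$ under rescaling, and the trivial reduction from general symmetric $P$ to homogeneous $P$. An alternative route would be to read the identity off the explicit power-series expansion of $\mathfrak{B}_a(y;\theta)$, where the degree-$d$ term is a symmetric function of $y$ multiplied by a homogeneous degree-$d$ polynomial in $a$, but the argument via \eqref{DifferentialSystem} and Theorem \ref{ExistenceBesselTheorem} is shorter and self-contained given what has already been stated.
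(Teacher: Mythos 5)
The paper does not present a proof of this lemma at all: it simply cites Corollary 3.7 of \cite{CuencaOrbital}. Your argument is a correct, self-contained derivation of the identity from the characterization in Theorem \ref{ExistenceBesselTheorem}. The key steps all check out: the chain rule gives the scaling of $\partial/\partial y_i$, the identity $\frac{1}{y_j-y_i}=\frac{c}{cy_j-cy_i}$ together with $d_{ij}\circ S_c=S_c\circ d_{ij}$ gives the same single factor of $c$ for the rational part, iterating over monomials of total degree $d$ gives $P(T^\theta)\circ S_c=c^d\,S_c\circ P(T^\theta)$ for $P$ symmetric homogeneous, and the reduction from general symmetric $P$ to homogeneous components is legitimate because those components are themselves symmetric and both sides of \eqref{DifferentialSystem} are linear in $P$. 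Applying this to $G=S_c\,\mathfrak{B}_a(\cdot;\theta)$ verifies symmetry, the normalization $G(0)=1$, and the eigenrelation $P(T^\theta)G=P(ca)G$, so the uniqueness clause of Theorem \ref{ExistenceBesselTheorem} identifies $G$ with $\mathfrak{B}_{ca}(\cdot;\theta)$. One minor remark: the separate treatment of $c=0$ is harmless but not actually required. The commutation relation $T_i^\theta\circ S_c=c\,S_c\circ T_i^\theta$ holds trivially at $c=0$ since $S_0 f$ is a constant function and both sides vanish, and the eigenrelation verification $P(T^\theta)G=c^d P(a)G=P(0)G$ still goes through for homogeneous $P$ of positive degree; you only invoked $c\ne 0$ to justify rewriting $\frac{1}{y_j-y_i}$ as $\frac{c}{cy_j-cy_i}$, and that rewriting is unnecessary when $c=0$ because the left-hand operator already annihilates constants. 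Including a self-contained proof like this would arguably strengthen the paper, since at present a reader must chase a reference to obtain it.
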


\subsubsection{Dunkl transforms}
Let $\theta \in (0, \infty)$. Define the weight function $w_N^{\theta}(\cdot)$ on $\mathbb{R}^N$:
\begin{align}
w_N^{\theta}(x_1,\dots,x_N)=\prod_{1\le i <j \le N}^{}|x_i-x_j|^{2\theta}.
\end{align}
Then, we consider a symmetrized version of the Dunkl transform $\mathsf{D}_N^{\theta}$,  defined on functions $f \in L^1\left(\mathbb{R}^N,w_N^{\theta}(x)dx\right)$ as follows (see \cite{RoslerRadial}, \cite{DeJeu}, \cite{DunklProcesses}):
\begin{align}
\mathsf{D}_N^{\theta} \left[f\right](\lambda)=\int_{\mathbb{R}^N}^{}f(x)\mathfrak{B}_{-\i \lambda}\left(x;\theta\right)w_N^{\theta}(x)dx, \ \lambda \in \mathbb{R}^N.
\end{align}
We also define the Dunkl transforms $\mathfrak{D}_N^{\theta}$ and $\mathfrak{E}_N^{\theta}$ for a probability measure $\mu$ on $\mathbb{R}^N$:
\begin{align*}
\mathfrak{D}_N^{\theta}[\mu](y)&=\int_{\mathbb{R}^N}^{}\mathfrak{B}_{\i a}(y;\theta)d\mu(a), \ y \in \mathbb{R}^N,\\
\mathfrak{E}_N^{\theta}[\mu](\lambda)&=\int_{\mathbb{R}^N}^{}\mathfrak{B}_{-\i \lambda}(y;\theta)d\mu(y), \ \lambda \in \mathbb{R}^N.
\end{align*}
The transform $\mathfrak{E}_N^{\theta}$ is a symmetric version of what is usually referred to in the literature as the Dunkl transform for measures. In this work, we will make heavy use of $\mathfrak{D}_N^{\theta}$. It should be thought of as the right analogue of the Fourier transform (equivalently characteristic function) in our setting.

We have the following basic properties of $\mathfrak{D}_N^{\theta}$:

\begin{prop}\label{DunklTransformProperties}Let $\mu$ is a probability measure on $\mathbb{R}^N$, invariant by permutation of the coordinates, and let  $f \in L^1\left(\mathbb{R}^N,w_N^{\theta}(x)dx\right)$.
\begin{itemize} 
\item[(1)]  The function $\mathfrak{D}_N^{\theta}[\mu]$ is continuous and its modulus is bounded by $1$.
\item[(2)] We have: 
\begin{align*}
\int_{\mathbb{R}^N}^{} \mathfrak{D}_N^{\theta}[\mu](y)f(y)w^{\theta}_N(y)dy=\int_{\mathbb{R}^N}^{}\mathsf{D}_N^{\theta}\left[f\right](-\lambda)d\mu(\lambda).
\end{align*}
\item[(3)] The measure $\mu$ is uniquely determined by $\mathfrak{D}_N^{\theta} [\mu]$.
\end{itemize}
\end{prop}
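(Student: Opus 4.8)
The plan is to treat the three items in turn; items (1) and (2) are soft, and the real content of (3) rests on the inversion theory of the Dunkl transform.

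\medskip
\noindent\emph{Item (1).} The key input here is the bound $|\mathfrak{B}_{\i a}(y;\theta)| \le 1$ for all real $a, y \in \mathbb{R}^N$. This follows from the positivity of the symmetrized Dunkl kernel: by Rösler's result (see \cite{RoslerRadial}, also \cite{DeJeu}), for each $a \in \mathbb{R}^N$ there is a probability measure $\rho_a$, supported in the convex hull of the $S_N$-orbit of $a$, such that $\mathfrak{B}_{\i a}(y;\theta) = \int_{\mathbb{R}^N} e^{\i \langle x, y\rangle}\, d\rho_a(x)$; hence $|\mathfrak{B}_{\i a}(y;\theta)| \le 1$, with $\mathfrak{B}_{\i a}(0;\theta) = 1$. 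Given this, for a permutation-invariant probability measure $\mu$ one gets $|\mathfrak{D}_N^{\theta}[\mu](y)| \le \int |\mathfrak{B}_{\i a}(y;\theta)|\, d\mu(a) \le 1$ immediately, and continuity of $y \mapsto \mathfrak{D}_N^{\theta}[\mu](y)$ follows by dominated convergence, using that $(a,y) \mapsto \mathfrak{B}_{\i a}(y;\theta)$ is continuous, being the restriction of a holomorphic function by Theorem \ref{ExistenceBesselTheorem}, and that it is uniformly dominated by the $\mu$-integrable constant $1$.

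\medskip
\noindent\emph{Item (2).} This is a direct application of Fubini's theorem. The function $(a, y) \mapsto \mathfrak{B}_{\i a}(y;\theta)\, f(y)\, w_N^{\theta}(y)$ is measurable and, by the bound just recorded, dominated in absolute value by $|f(y)|\, w_N^{\theta}(y)$, which is $\mu \otimes dy$-integrable since $\mu$ is a probability measure and $f \in L^1(\mathbb{R}^N, w_N^{\theta}(x)\, dx)$. Interchanging the order of integration in $\int_{\mathbb{R}^N} \mathfrak{D}_N^{\theta}[\mu](y)\, f(y)\, w_N^{\theta}(y)\, dy$ turns the inner $y$-integral into $\int_{\mathbb{R}^N} \mathfrak{B}_{\i a}(y;\theta)\, f(y)\, w_N^{\theta}(y)\, dy = \mathsf{D}_N^{\theta}[f](-a)$, directly from the definition of $\mathsf{D}_N^{\theta}$ (since $\mathfrak{B}_{\i a} = \mathfrak{B}_{-\i(-a)}$), and integrating in $a$ against $\mu$ yields the claimed identity.

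\medskip
\noindent\emph{Item (3).} Suppose two permutation-invariant probability measures $\mu_1, \mu_2$ on $\mathbb{R}^N$ satisfy $\mathfrak{D}_N^{\theta}[\mu_1] = \mathfrak{D}_N^{\theta}[\mu_2]$. Pairing with an arbitrary $f \in L^1(\mathbb{R}^N, w_N^{\theta}(x)\, dx)$ and using item (2) gives $\int \mathsf{D}_N^{\theta}[f](-\lambda)\, d\mu_1(\lambda) = \int \mathsf{D}_N^{\theta}[f](-\lambda)\, d\mu_2(\lambda)$. Now one invokes that the symmetrized Dunkl transform $\mathsf{D}_N^{\theta}$ is a linear bijection of the space of $S_N$-invariant Schwartz functions onto itself (the symmetrized form of the de Jeu--Rösler inversion theorem, \cite{DeJeu}, \cite{RoslerRadial}); since symmetric Schwartz functions belong to $L^1(\mathbb{R}^N, w_N^{\theta}(x)\, dx)$, the weight $w_N^{\theta}$ having at most polynomial growth, the functions $\mathsf{D}_N^{\theta}[f]$ with $f$ symmetric Schwartz exhaust all symmetric Schwartz functions. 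Composing with $\lambda \mapsto -\lambda$ we conclude $\int g\, d\mu_1 = \int g\, d\mu_2$ for every symmetric Schwartz function $g$, and by replacing an arbitrary test function with its symmetrization this extends to all Schwartz functions, so $\mu_1 = \mu_2$. If one prefers to avoid citing the full Schwartz-space bijectivity, an alternative is to use the $L^2(\mathbb{R}^N, w_N^{\theta})$ Plancherel theorem for $\mathsf{D}_N^{\theta}$ together with a standard mollification argument to approximate an arbitrary continuous bump function by elements of the image of $\mathsf{D}_N^{\theta}$.

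\medskip
\noindent The routine parts are items (1) and (2); the only place where external machinery is genuinely required is the inversion (or Plancherel) theorem for the Dunkl transform in item (3), together with the kernel bound $|\mathfrak{B}_{\i a}(y;\theta)| \le 1$ in item (1). Both are classical and available in \cite{DeJeu}, \cite{Opdam}, \cite{RoslerRadial}, so the main obstacle is not conceptual but bookkeeping: matching the normalization conventions of those references to the symmetrized transforms $\mathfrak{B}$, $\mathsf{D}_N^{\theta}$ and $\mathfrak{D}_N^{\theta}$ used here.
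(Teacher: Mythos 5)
Your proof is correct and follows essentially the same route as the paper: part (1) rests on the kernel bound $|\mathfrak{B}_{\i a}(y;\theta)|\le 1$ plus dominated convergence (you deduce it from Rösler's positive radial formula, while the paper cites Theorem 2.2 of \cite{DunklProcesses} and also notes it follows from Proposition \ref{OrbitalAndBesselProp}; both are the same standard fact), part (2) is Fubini exactly as in the paper, and part (3) uses item (2) together with the fact that $\mathsf{D}_N^{\theta}\left[L^1\left(\mathbb{R}^N,w_N^{\theta}dx\right)\right]$ contains the symmetric Schwartz space, justified via invariance of that space under $\mathsf{D}_N^{\theta}$ from the Dunkl inversion theorem, which is the paper's argument spelled out slightly more explicitly (including the symmetrization step needed because $\mu$ is only assumed permutation-invariant).
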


\begin{proof}
Part (1) follows from the estimate in Theorem 2.2 in \cite{DunklProcesses} (which also follows from the representation in Proposition \ref{OrbitalAndBesselProp} below) and the dominated convergence theorem. 

For Part (2), we use Fubini's theorem, which is applicable because of the same estimate: 
\begin{align*}
\int_{\mathbb{R}^N}^{}\mathfrak{D}_N^{\theta}\left[\mu\right](y)f(y)w^{\theta}_N(y)dy&= \int_{\mathbb{R}^N}^{}\int_{\mathbb{R}^N}^{}\mathfrak{B}_{\i \lambda}(y;\theta)d\mu(\lambda)f(y)w^{\theta}_N(y)dy\\
&=\int_{\mathbb{R}^N}^{}\mathsf{D}_N^{\theta} \left[f\right](-\lambda)d\mu(\lambda).
\end{align*}
For Part (3), if $\mathfrak{D}_N^{\theta}[\mu] = \mathfrak{D}_N^{\theta}[\nu] $ for two probability measures $\mu$ and $\nu$, invariant by permutation of the coordinates, then by using Part (2),  we get for any $f \in L^1\left(\mathbb{R}^N,w_N^{\theta}(x)dx\right)$:
\begin{align*}
\int_{\mathbb{R}^N}^{}\mathsf{D}_N^{\theta} \left[f\right](-\lambda)d\mu(\lambda) = \int_{\mathbb{R}^N}^{}\mathsf{D}_N^{\theta} \left[f\right](-\lambda)d\nu(\lambda).
\end{align*}
The  fact that $\mu = \nu$ follows because $\mathsf{D}_N^{\theta}\left[L^1\left(\mathbb{R}^N,w^{\theta}_N(x)dx\right)\right]$ contains 
the symmetric Schwartz space, i.e. the space of functions from $\mathbb{R}^N$ to $\mathbb{C}$ whose derivatives have rapid decay and which are invariant by permutation of the coordinates. 
This last fact is true because the symmetric Schwartz space  is invariant under $\mathsf{D}_N^{\theta}$, due to an inversion formula for this transform: see \cite{CuencaOrbital}, \cite{DeJeu} and \cite{DunklProcesses}. 
\end{proof}

We have the following analogue of Levy's continuity theorem:

\begin{prop}\label{LevyForDunklTransform}
Let $\left(\mu_n\right)_{n \geq 1}$ be a family of probability measures on $\mathbb{R}^N$, invariant by permutation of the coordinates, such that $\mathfrak{D}_N^{\theta}[\mu_n]$ converges pointwise to a function $\phi:\mathbb{R}^N \to \mathbb{C}$ that is continuous at 0 with $\phi \left(0\right)=1 $. Then, there exists a unique probability measure $\mu$ on $\mathbb{R}^N$, invariant by permutation of the coordinates, such that  $\mathfrak{D}_N^{\theta}[\mu]=\phi$, and $\mu_n$ tends weakly to $\mu$.
\end{prop}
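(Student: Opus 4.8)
The plan is to mimic the classical proof of Lévy's continuity theorem, replacing the Fourier transform by the Dunkl transform $\mathfrak{D}_N^{\theta}$ and using the harmonic-analytic facts already established, in particular Proposition \ref{DunklTransformProperties}. First I would establish tightness of the family $(\mu_n)_{n\geq 1}$. The key point is that the Dunkl transform $\mathfrak{D}_N^{\theta}[\mu]$ controls the mass of $\mu$ near infinity in the same way the characteristic function does in the scalar case: using Proposition \ref{DunklTransformProperties}(2) with a suitable nonnegative approximate-identity-type test function $f$ (a symmetric Schwartz bump, or more precisely its $\mathsf{D}_N^{\theta}$-preimage), together with the fact that $\mathfrak{B}_{-\i\lambda}(x;\theta)$ is bounded by $1$ and behaves near $\lambda = 0$ in a controlled way (Theorem 2.2 of \cite{DunklProcesses} and the positive representation in Proposition \ref{OrbitalAndBesselProp}), one derives an inequality bounding $\mu_n\big(\{|x|\geq R\}\big)$ by an average of $1 - \Re\,\mathfrak{D}_N^{\theta}[\mu_n]$ over a small neighborhood of the origin, of radius $\sim 1/R$. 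Since $\mathfrak{D}_N^{\theta}[\mu_n]\to\phi$ pointwise, $\phi(0)=1$, $\phi$ is continuous at $0$, and the convergence is dominated (all transforms bounded by $1$), this average tends to something small uniformly in $n$ once $R$ is large, giving the uniform tightness $\sup_n\mu_n(\{|x|\geq R\})\to 0$ as $R\to\infty$.

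Having tightness, by Prokhorov's theorem every subsequence of $(\mu_n)$ has a weakly convergent sub-subsequence, say $\mu_{n_k}\to\mu$; the limit $\mu$ is automatically a probability measure invariant by permutation of coordinates (the latter because permutation invariance is preserved under weak limits). For such a weak limit I would pass to the limit in $\mathfrak{D}_{N}^{\theta}[\mu_{n_k}](y)=\int \mathfrak{B}_{\i a}(y;\theta)\,d\mu_{n_k}(a)$: for fixed $y$ the integrand $a\mapsto\mathfrak{B}_{\i a}(y;\theta)$ is continuous and bounded (modulus $\leq 1$ by the same estimate), so weak convergence gives $\mathfrak{D}_N^{\theta}[\mu](y)=\lim_k\mathfrak{D}_N^{\theta}[\mu_{n_k}](y)=\phi(y)$. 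Thus every subsequential weak limit $\mu$ satisfies $\mathfrak{D}_N^{\theta}[\mu]=\phi$, and by the injectivity of the Dunkl transform on permutation-invariant probability measures (Proposition \ref{DunklTransformProperties}(3)) all these limits coincide with a single measure $\mu$. A standard subsequence argument then upgrades this to $\mu_n\to\mu$ weakly, and this $\mu$ is the unique permutation-invariant probability measure with $\mathfrak{D}_N^{\theta}[\mu]=\phi$, again by part (3).

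The main obstacle I expect is the tightness step, specifically producing the right analogue of the elementary scalar estimate ``$\mu(|x|>R)\leq C R^N\int_{|\,y\,|<1/R}(1-\Re\widehat\mu(y))\,dy$''. In the Dunkl setting $\mathfrak{B}_{-\i\lambda}(x;\theta)$ is not simply $e^{\i\langle\lambda,x\rangle}$, so one cannot directly integrate the transform against $\mathbf{1}_{[-1/R,1/R]^N}$ and read off a lower bound on $1-\mathfrak{B}$ away from the origin. The way around it is to work through Proposition \ref{DunklTransformProperties}(2): choose $f\in L^1(\mathbb{R}^N,w_N^\theta)$ symmetric with $\mathsf{D}_N^\theta[f]\geq 0$, $\mathsf{D}_N^\theta[f](0)$ normalized, and $\mathsf{D}_N^\theta[f]$ supported in a small ball (this is possible because $\mathsf{D}_N^\theta$ maps the symmetric Schwartz space onto itself via the inversion formula quoted in the proof of Proposition \ref{DunklTransformProperties}), and arrange that the corresponding $\mathfrak{D}_N^\theta[\mu](y)$-weighted integral of $f\cdot w_N^\theta$ stays bounded below by a constant times $\mu(\{|x|\leq R\})$ when $f$ is dilated appropriately; combined with the positive integral representation of $\mathfrak{B}$ from Proposition \ref{OrbitalAndBesselProp}, which shows $\mathfrak{B}_{-\i\lambda}(x;\theta)\in[-1,1]$ and is close to $1$ only when $|\lambda|\,|x|$ is small, this yields the desired uniform tail bound. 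Once this estimate is in hand the rest of the argument is the routine Prokhorov-plus-injectivity scheme described above.
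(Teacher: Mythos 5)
Your overall scheme---establish tightness, extract a subsequential weak limit by Prokhorov, pass to the limit in $\mathfrak{D}_N^\theta$ (which is legitimate because, for each fixed $y$, the map $a\mapsto\mathfrak{B}_{\i a}(y;\theta)$ is continuous and of modulus at most $1$), and identify the limit via the injectivity of Proposition \ref{DunklTransformProperties}(3)---is exactly the classical L\'evy-continuity argument that the paper invokes, except that the paper simply delegates it to Theorem~2.7(2) of R\"osler--Voit \cite{DunklProcesses} together with the scalar Theorem~23.8 of \cite{BauerBook}, rather than rederiving the tightness step. The Prokhorov-plus-injectivity part of your proposal, including the observation that permutation invariance survives weak limits and the final subsequence trick, is correct as written.

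Where you must be more careful is the tightness estimate, and the sketch as stated does not assemble into an argument. First, $\mathfrak{B}_{-\i\lambda}(x;\theta)$ is not real-valued and does not lie in $[-1,1]$: by the $a\leftrightarrow y$ symmetry of $\mathfrak{B}$ and Proposition \ref{OrbitalAndBesselProp}, it is the characteristic function, evaluated at $\lambda$, of the bounded real vector of diagonal entries of an orbital beta process with top row $x$, hence complex with $|\mathfrak{B}_{-\i\lambda}(x;\theta)|\le 1$. Second, the heuristic ``$\mathfrak{B}_{-\i\lambda}(x;\theta)$ is close to $1$ only when $|\lambda||x|$ is small'' is false pointwise: take $x=(R,\dots,R)$ and $\lambda=(\lambda_1,0,\dots,0)$; then the bottom diagonal entry is deterministically $R$ and $\mathfrak{B}_{-\i\lambda}(x;\theta)=e^{-\i\lambda_1 R}$, which has modulus exactly $1$ for all $R$, and whose real part returns to $1$ infinitely often. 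As in the scalar case, the usable input is an \emph{averaged} lower bound: the integral of $1-\Re\,\mathfrak{D}_N^\theta[\mu_n]$ against a fixed nonnegative test density concentrated near the origin dominates a constant times $\mu_n(\{|x|>R\})$, a fact one extracts from the duality of Proposition \ref{DunklTransformProperties}(2) combined with the Bochner-type (positive radial) representation of $\mathfrak{B}$ that underlies Proposition \ref{OrbitalAndBesselProp} and Theorem~2.2 of \cite{DunklProcesses}. Also, your test-function recipe (choosing $f$ so that $\mathsf{D}_N^\theta[f]\ge 0$, normalized, and supported in a small ball) appears to have the roles of $f$ and its transform interchanged relative to that program. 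None of these is a fatal objection to the method, but the decisive inequality has to be set up carefully---essentially the computation hidden in the R\"osler--Voit reference---before the rest of your (otherwise correct) argument can run.
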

\begin{proof}
The same statement where $\mathfrak{D}_N^{\theta}$ is replaced by the usual Dunkl transform and the measures are not supposed to be invariant by permutation corresponds to  
Part (2) of Theorem 2.7 in \cite{DunklProcesses}. If we assume that the measures are invariant by permutation, than we can symmetrize the Dunkl transform without changing the quantities which are involved, and we get our claim 
for  $\mathfrak{E}_N^{\theta}$ instead of  $\mathfrak{D}_N^{\theta}$. The proof for $\mathfrak{D}_N^{\theta}$ is completely analogous. From the properties in Proposition \ref{DunklTransformProperties}, the proof carries over from the classical case: see for example Theorem 23.8 in \cite{BauerBook}.
\end{proof}

\subsubsection{The orbital beta process and Bessel functions}

The connection of multivariate Bessel functions to the orbital beta process is through the following, see \cite{CuencaOrbital}, \cite{GorinMarcus}:

\begin{prop}\label{OrbitalAndBesselProp}
Let $y_1, \dots, y_N \in \mathbb{C}$ and $a \in  W^N$. Then,
\begin{align}
\mathfrak{B}_a\left(y_1,\dots, y_N;\theta\right)=\mathbb{E}\left[\exp \left(\sum_{k=1}^{N} y_k d_k \right)\right]
\end{align}
where  $(d_k)_{1 \leq k \leq N}$ are the diagonal entries of an orbital beta process of parameter $\theta$ and top row $a$. 
\end{prop}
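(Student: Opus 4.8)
The plan is to prove the identity $\mathfrak{B}_a(y_1,\dots,y_N;\theta) = \mathbb{E}\left[\exp\left(\sum_{k=1}^N y_k d_k\right)\right]$ by showing that the right-hand side, viewed as a function of $y$, solves the defining differential system \eqref{DifferentialSystem} and satisfies the normalization, so that uniqueness from Theorem \ref{ExistenceBesselTheorem} forces the equality. First I would define $\Phi_a(y) := \mathbb{E}\left[\exp\left(\sum_{k=1}^N y_k d_k\right)\right]$, where the diagonal entries $(d_k)_{1\le k\le N}$ are built from the orbital beta process with top row $a$, and check that $\Phi_a(0,\dots,0)=1$, which is immediate. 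Since the $d_k$ are bounded (they lie in an interval determined by the extreme entries of $a$, as noted in the proof of Proposition \ref{orbitalDirichlet}), $\Phi_a$ extends to an entire function of $y\in\mathbb{C}^N$, so differentiating under the expectation is legitimate.

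The main work is to verify that $\Phi_a$ satisfies $P(T_1^\theta,\dots,T_N^\theta)\Phi_a(y) = P(a_1,\dots,a_N)\Phi_a(y)$ for every symmetric polynomial $P$. Because the algebra of symmetric polynomials is generated by the power sums $p_m(x)=\sum_i x_i^m$, and the $T_i^\theta$ commute, it suffices to treat $P = p_m$, i.e. to show $\sum_{i=1}^N (T_i^\theta)^m \Phi_a = \left(\sum_{i=1}^N a_i^m\right)\Phi_a$. I would establish this recursively via a single-level integration-by-parts identity: conditioning on the top row $a^{(N)}=a$ and the row $a^{(N-1)}$ distributed according to $\Lambda_{N,N-1}^\theta(a,\cdot)$, one uses the explicit Dixon-Anderson density \eqref{DixonAndersonExplicit} (for distinct coordinates, then extended by continuity) to compute how the Dunkl operators $T_i^\theta$ in the $y$-variables act on the conditional Laplace transform $\mathbb{E}[e^{\sum y_k d_k}\mid a^{(N)}]$. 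Concretely, the Dunkl operator in $y$ is designed exactly so that applying it and integrating against the Dixon-Anderson weight reproduces multiplication by the spectral data of the top row — this is the standard ``intertwining/eigenfunction'' mechanism for Bessel functions on Gelfand pairs, and here it follows from the beta-integral identities underlying the Selberg/Anderson integral. An alternative, possibly cleaner route: invoke the known branching/coherence property of multivariate Bessel functions,
\begin{align*}
\mathfrak{B}_{a}(y_1,\dots,y_N;\theta) = \int_{W^{N-1}} e^{(d_N) y_N}\,\mathfrak{B}_{b}(y_1,\dots,y_{N-1};\theta)\,\Lambda_{N,N-1}^\theta(a,db),
\end{align*}
where $d_N = \sum a_i - \sum b_j$ — if this branching rule is available in the cited references \cite{CuencaOrbital}, \cite{GorinMarcus}, then the proposition follows by a straightforward induction on $N$, the base case $N=1$ being $\mathfrak{B}_{a_1}(y_1;\theta)=e^{a_1 y_1}$, and the inductive step just unfolding the definition of the orbital process as a Markov chain with kernels $\Lambda_{k+1,k}^\theta$.

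The hard part will be the single-level identity: verifying that the conditional Laplace transform over one application of $\Lambda_{N,N-1}^\theta$ intertwines correctly with the Dunkl operators, i.e. that the Dixon-Anderson kernel is precisely the ``transition kernel'' compatible with the Dunkl structure. This requires either a direct (somewhat involved) integration-by-parts computation with the weight $w_N^\theta$ and the factor $\prod_{i,j}|a_i-b_j|^{\theta-1}$, handling the boundary terms at the interlacing constraints (which vanish for $\theta\ge 1$ and need care, or an analytic-continuation argument in $\theta$, for $\theta\in(0,1)$), or a clean citation. I would therefore structure the proof to lean on the branching identity from \cite{CuencaOrbital}, reducing everything to induction, and relegate the analytic subtleties to the reference; if the branching identity is not stated there in the needed generality (allowing coinciding $a_i$), I would first prove it for distinct $a_i$ via \eqref{DixonAndersonExplicit} and then pass to the general case using the continuity of both sides in $a$ — the left side by the holomorphicity in Theorem \ref{ExistenceBesselTheorem}, the right side by the continuity of $\Lambda_{N,N-1}^\theta(a,\cdot)$ in $a$ noted after its definition.
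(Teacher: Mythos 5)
The paper does not actually supply a proof of this proposition: it is presented as a known result cited from \cite{CuencaOrbital} and \cite{GorinMarcus}, established in those references for $a$ with distinct entries, and the only argument the paper adds is the one-sentence remark following the statement that the general case follows by continuity because the diagonal entries are uniformly bounded when $a$ ranges over a compact subset of $W^N$. Your second route — the branching identity $\mathfrak{B}_a(y_1,\dots,y_N;\theta) = \int_{W^{N-1}} e^{y_N(\sum_i a_i - \sum_j b_j)}\,\mathfrak{B}_b(y_1,\dots,y_{N-1};\theta)\,\Lambda^{\theta}_{N,N-1}(a,db)$, induction on $N$ with base case $\mathfrak{B}_{a_1}(y_1;\theta)=e^{a_1 y_1}$, and a continuity passage from distinct to coinciding top-row entries — is essentially the argument the cited papers use, and your continuity step is verbatim the paper's remark. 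Your first route (verifying the Dunkl eigenfunction system directly for $\Phi_a$) is also valid in principle, but as you observe, proving the single-level intertwining between $\Lambda^{\theta}_{N,N-1}$ and the Dunkl operators amounts to reproving the branching identity, so it is not a genuinely lighter path. One small caution on that route: since $T_i^{\theta}$ is not itself a symmetric operator, you would need to argue at the level of $P(T_1^{\theta},\dots,T_N^{\theta})$ for symmetric $P$ acting on the symmetric function $\Phi_a$, but that is a routine point. In summary, your proposal is sound and its main route reconstructs the cited proof; the paper itself contributes only the continuity extension.
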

This result has originally  been proven for $a$ with distinct entries, but this condition can be dropped by continuity, since the diagonal entries are uniformly bounded when $a$ is restricted to any compact subset of $W^N$. 
In the next subsection, we will prove a limit theorem for  the function $\mathfrak{B}_{a^{(N)}}$, when  $(a^{(N)})_{N \geq 1}$ is a sequence satisfying the O-V conditions.
For $N \geq m \geq 1$,  $a \in W^N$, $y_1, \dots, y_m \in \mathbb{C}$, we will denote 
$$\mathfrak{B}^N_a\left(y_1,\dots, y_m;\theta\right) := \mathfrak{B}_a\left(y_1,\dots, y_m, 0^{N-m};\theta\right),$$
where $0^{N-m}$ denotes a sequence of $N-m$ coordinates equal to $0$. 

\begin{prop}\label{EquivalentMultidimensionalConvergenceBesselProposition}
Let $K\ge 1$ and suppose $\{a^{(N)}\}_{N\ge 1}$ is a sequence following the O-V conditions,  with limit point $\omega \in \Omega$. Then, 
\begin{align}
\mathfrak{B}_{a^{(N)}}^N\left(y_1,\dots,y_K;\theta\right) \underset{N \to \infty}{\longrightarrow} \prod_{j=1}^{K}\mathfrak{F}_{\omega,\theta}(-\i y_j)
\end{align}
uniformly on compacts in:
\begin{align*}
\mathsf{S}^K_{\omega,\theta}=\bigg\{(y_1,\dots,y_K) \in \mathbb{C}^K \bigg | |\Re (y_i)|<\frac{\theta}{K\max \{\alpha_1^+,\alpha_1^-\}}, \ 1\le i\le K\bigg\}.
\end{align*}
\end{prop}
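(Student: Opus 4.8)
The plan is to reduce the multidimensional statement to the one-dimensional result already established in Proposition \ref{sufficiencyuniformstrip}, using the probabilistic representation of the multivariate Bessel function as a Laplace transform of the diagonal entries (Proposition \ref{OrbitalAndBesselProp}) together with the exchangeability of those entries. First I would recall that, by Proposition \ref{OrbitalAndBesselProp},
\begin{align*}
\mathfrak{B}^N_{a^{(N)}}(y_1,\dots,y_K;\theta) = \mathbb{E}\left[ \exp\left( \sum_{k=1}^K y_k d^{(N)}_k \right) \right],
\end{align*}
where $(d^{(N)}_k)_{1\le k\le N}$ are the diagonal entries of the orbital beta process with top row $a^{(N)}$, and that these diagonal entries are exchangeable (this is the argument used in the proof of Proposition \ref{orbitalDirichlet}, via the symmetry of $\mathfrak{B}_a$ in its arguments). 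The target limit is a product of $K$ copies of $\mathfrak{F}_{\omega,\theta}(-\i y_j)$, which is exactly the limit of $\mathbb{E}[e^{y d^{(N)}_1}]$ from Proposition \ref{sufficiencyuniformstrip} applied coordinate by coordinate; so the content of the proposition is that the $d^{(N)}_k$ become asymptotically independent.

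The key steps, in order, would be: (1) Establish the one-dimensional marginal convergence $\mathbb{E}[e^{y d^{(N)}_k}] = \mathbb{E}[e^{y d^{(N)}_1}] \to \mathfrak{F}_{\omega,\theta}(-\i y)$ for each fixed $k$, which is immediate from exchangeability and Proposition \ref{sufficiencyuniformstrip}. (2) Obtain a uniform bound: for $(y_1,\dots,y_K)$ ranging over a compact subset of $\mathsf{S}^K_{\omega,\theta}$, show $\sup_{N} \mathbb{E}[\exp(\sum_k y_k d^{(N)}_k)] < \infty$. Here the restriction $|\Re(y_i)| < \theta/(K\max\{\alpha_1^+,\alpha_1^-\})$ is exactly what is needed: by Hölder's inequality with exponents $K$,
\begin{align*}
\mathbb{E}\left[ \exp\left( \sum_{k=1}^K y_k d^{(N)}_k \right) \right] \le \prod_{k=1}^K \mathbb{E}\left[ \exp\left( K y_k d^{(N)}_k \right) \right]^{1/K},
\end{align*}
and each factor is controlled uniformly in $N$ by the argument in the proof of Proposition \ref{sufficiencyuniformstrip} (the beta-gamma / $\cosh$-bound argument), since $|\Re(K y_k)| < \theta/\max\{\alpha_1^+,\alpha_1^-\}$. (3) Use this bound together with Montel's theorem: $(y_1,\dots,y_K)\mapsto \mathfrak{B}^N_{a^{(N)}}(y_1,\dots,y_K;\theta)$ is holomorphic in $\mathsf{S}^K_{\omega,\theta}$ (by Theorem \ref{ExistenceBesselTheorem}), and the family is locally uniformly bounded, so every subsequence has a locally uniformly convergent sub-subsequence. (4) Identify the limit: it suffices to do so on the purely imaginary locus $y_j \in \i\mathbb{R}$, by analytic continuation. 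On $\i\mathbb{R}^K$ one argues that the vector $(d^{(N)}_1,\dots,d^{(N)}_K)$ converges in distribution to a vector of $K$ i.i.d.\ variables each with characteristic function $\mathfrak{F}_{\omega,\theta}$; equivalently the joint characteristic function factorizes in the limit.

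The main obstacle is step (4): proving the asymptotic independence of $d^{(N)}_1,\dots,d^{(N)}_K$. I expect to handle this by the same moment/cumulant method used in Proposition \ref{sufficiencyuniformstrip}, lifted to the joint setting. Concretely, using the Dixon-Anderson description one can write each partial sum $s^{(N)}_j = \sum_{i\le j} a^{(j)}_i$ in terms of the top row and independent Dirichlet/gamma weights, express $d^{(N)}_k = s^{(N)}_k - s^{(N)}_{k-1}$, and compute joint cumulants; the O-V conditions force all mixed cumulants involving two or more distinct indices to vanish in the limit while the pure cumulants of order $p$ in a single index converge to $(p-1)!\theta^{1-p}(\sum_i(\alpha_i^+)^p + \sum_i (\alpha_i^-)^p)$ (and the prescribed values for $p=1,2$), matching the cumulants of $\mathfrak{F}_{\omega,\theta}$. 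Since $\mathfrak{F}_{\omega,\theta}$ is analytic near $0$, the limiting joint law is determined by its moments, giving convergence in distribution on $\i\mathbb{R}^K$, and then Montel plus analytic continuation upgrade this to locally uniform convergence on all of $\mathsf{S}^K_{\omega,\theta}$. An alternative, possibly cleaner route to step (4) — which I would try first — is to invoke Proposition \ref{LevyForDunklTransform} / the one-dimensional continuity theorem to get $d^{(N)}_1 \Rightarrow X$ with $\mathbb{E}[e^{\i t X}] = \mathfrak{F}_{\omega,\theta}(t)$, then bootstrap to the joint statement by showing the ``off-diagonal'' contributions are negligible, e.g.\ by controlling $\mathrm{Cov}(d^{(N)}_j, d^{(N)}_k)$ and higher joint moments directly from the Dirichlet structure; but the cumulant computation seems the most robust.
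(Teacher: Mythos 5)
Your steps (1)--(3) match the paper's strategy exactly: the paper also uses Proposition \ref{OrbitalAndBesselProp}, exchangeability of the diagonal entries, H\"older's inequality with exponent $K$, and Proposition \ref{sufficiencyuniformstrip} to establish local uniform boundedness of $\mathfrak{B}^N_{a^{(N)}}$ on compact subsets of $\mathsf{S}^K_{\omega,\theta}$ (this is the content of Proposition \ref{UniformBoundednessMultidimensionalLemma}), and then invokes Montel, equicontinuity and analytic continuation to reduce the full statement to pointwise convergence on a real open box. So far so good.

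The gap is your step (4), which is where all the work actually lies. Your claim that one can ``write each partial sum $s^{(N)}_j$ in terms of the top row and independent Dirichlet/gamma weights'' is only true for the single top transition: Proposition \ref{orbitalDirichlet} gives $s_N - s_{N-1} = \sum_k a_k\alpha^{(N)}_k$ linearly, but already $d_{N-1} = s_{N-1} - s_{N-2} = \sum_k a^{(N-1)}_k(1-\alpha^{(N-1)}_k)$ involves the individual entries $a^{(N-1)}_k$, which are roots of a random polynomial and are \emph{not} linear functions of the Dirichlet weights. So your joint cumulants do not have the tractable closed form you are hoping for, and the assertion that ``the O-V conditions force all mixed cumulants $\ldots$ to vanish in the limit'' is precisely the asymptotic-independence statement you are trying to prove, not a consequence you can read off. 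Your alternate idea of using Proposition \ref{LevyForDunklTransform} as a replacement also cannot work: that continuity theorem is the tool that converts convergence of Bessel/Dunkl transforms into convergence of distributions, i.e.\ it is applied \emph{after} one already knows the Bessel functions converge.

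What the paper actually does to identify the limit is very different in character. It proves Proposition \ref{PointwiseConvergenceMultidimensionalBesselProp} by induction on $K$, using the explicit contour-integral formula of Theorem \ref{ContourIntegralMultiDimensions} (due to Cuenca, obtained by degenerating Pieri-type integral formulas for Jack and Macdonald polynomials), which expresses the product $\mathfrak{B}^N_a(-y_1,\dots,-y_m;\theta)\,\mathfrak{B}^N_a(-y;\theta)$ as an integral of $\mathfrak{B}^N_a$ evaluated at shifted arguments over a simplex; a Laplace-method analysis (Lemmas \ref{AuxiliaryLemma1} and \ref{AuxiliaryLemma2}), valid thanks to the uniform compact boundedness from step (2), shows this integral localizes at the corner of the simplex and yields the $(m+1)$-variable factorization from the $m$-variable one. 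This integral formula is the essential new input, and without it or a genuine substitute your argument has no way to close.
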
 

Let us now prove that Proposition \ref{sufficiency} can be deduced from Proposition \ref{EquivalentMultidimensionalConvergenceBesselProposition}. Indeed, integrating the result of 
Proposition \ref{OrbitalAndBesselProp} with respect to the distribution of $\tilde{a}^{(m)}$, we get
$$\mathbb{E} [ \mathfrak{B}_{\tilde{a}^{(m)}} \left(y_1,\dots, y_m;\theta\right) ] = \mathbb{E}\left[\exp \left(\sum_{k=1}^{m} y_k d_k \right)\right]$$
where $(d_k)_{1 \leq k \leq m}$ are the $m$ first diagonal entries of any consistent, random family of interlacing arrays of parameter $\theta$ and length at least $m$,  $\tilde{a}^{(m)}$ being its $m^{th}$ row.

For $N \geq K \geq 1$, we apply this to an orbital beta process with top row $a^{(N)}$, successively for $m=N$ and  $m =K$, with $y_k = 0$ when $m = N$ and $k > K$, and we deduce
$$\mathbb{E} [ \mathfrak{B}_{\tilde{a}^{(N,K)}} \left(y_1,\dots, y_K;\theta\right) ] = \mathfrak{B}^N_{a^{(N)}} \left(y_1,\dots, y_K;\theta\right),$$
where $\tilde{a}^{(N,K)}$ is the $K^{th}$ row of this orbital beta process. 
If we denote by $\mu_{N,K}(\cdot)=\Lambda_{N,K}^{\theta}\left(a^{(N)},\cdot\right)$ the distribution of $\tilde{a}^{(N,K)}$ and if  $\mu_{N,K,sym}$ is the law obtained from $\mu_{N,K}$ by 
permuting the coordinates, independently and uniformly at random, we get, using Lemma \ref{MultiplicativitySymmetryLemma}, for all $y \in \mathbb{R}^K$, 
$$\mathfrak{D}_K^{\theta}[\mu_{N,K,sym}] (y)= \int_{\mathbb{R}^K} \mathfrak{B}_{\i a} (y; \theta)  d \mu_{N,K,sym}(a) = \int_{\mathbb{R}^K} \mathfrak{B}_{ a} ( \i y; \theta)  d \mu_{N,K,sym} (a).$$
By symmetry of the multivariate Bessel function with respect to the coordinates of $a$, we can replace $\mu_{N, K,sym}$ by $\mu_{N,K}$  in the last expression, which implies 
\begin{align}\label{ConvergenceOfSymmetrizedDunkl}
\mathfrak{D}_K^{\theta}[\mu_{N,K,sym}] (y) = \mathbb{E} [ \mathfrak{B}_{ \tilde{a}^{(N,K)}} ( \i y; \theta)  ]  =  \mathfrak{B}^N_{a^{(N)}} \left(\i y ;\theta\right) 
\underset{N \to \infty}{\longrightarrow}  \prod_{j=1}^{K}\mathfrak{F}_{\omega,\theta}(y_j).
\end{align}
This convergence implies the convergence of $\mu_{N,K,sym}$ by using 
Proposition \ref{LevyForDunklTransform}. We have then proven the convergence of the distribution of the $K^{th}$ row under the orbital distribution of parameter $\theta$ and top row 
$a^{(N)}$. By the Markov property satisfied by all consistent families of interlacing arrays, we deduce the convergence of the joint distribution of the $K$ first rows to a consistent family of interlacing arrays of length $K$, 
for any finite $K \geq 1$. 
The limiting distributions for different values of $K$ are compatible, which shows, by using Kolmogorov's extension theorem, that they all come from the law of an infinite consistent family of interlacing arrays. 
For this infinite family, the joint law of the $K$ first  diagonal entries is necessarily the limit of the corresponding distribution for the orbital process with top row  $a^{(N)}$. Now, 
by Proposition \ref{OrbitalAndBesselProp}, this distribution has 
a Fourier transform given by $(y_1, \dots, y_K) \mapsto \mathfrak{B}_{a^{(N)}}^N\left(\i y_1,\dots, \i y_K;\theta\right)$, which converges to $\prod_{j=1}^{K}\mathfrak{F}_{\omega,\theta}(y_j)$, and then 
under the limiting infinite family, the diagonal entries are i.i.d. with Fourier transform $\mathfrak{F}_{\omega,\theta}$.
The only result in Proposition \ref{sufficiency}  which remains to be proven is the fact that the law of the limiting infinite family depends only on $\theta$ and $\omega$, and is extremal in the 
set of consistent distributions. 

For the first part of the statement, it is enough to show that the law of an infinite consistent family $(\tilde{a}^{(i)})_{i \geq 1}$ of interlacing arrays is uniquely determined by the joint law of its diagonal entries. 
This is checked as follows: if the law of the diagonal entries is determined, then $\mathbb{E} [ \mathfrak{B}_{\tilde{a}^{(m)}} \left(y_1,\dots, y_m;\theta\right) ]$
is determined for all $m \geq 1$, $y_1,\dots, y_m \in \mathbb{C}$, which fixes the Dunkl transform $\mathfrak{D}_m^{\theta}$ of the symmetrized distribution of $\tilde{a}^{(m)}$, 
and then the law of $\tilde{a}^{(m)}$ by Proposition \ref{DunklTransformProperties}. The joint law of $(\tilde{a}^{(i)})_{1 \leq i \leq m}$ is then fixed by the Markov property, 
which determines the law of  $(\tilde{a}^{(i)})_{i \geq 1}$ since $m$ is arbitrary. 

For the extremality, we proceed as follows: we first observe that taking the distribution of the  diagonal entries induces a map $\rho_1$ from the consistent distributions on the infinite families of interlacing arrays to 
the exchangeable probability measures on the sequences of real numbers, the exchangeability coming from Proposition \ref{OrbitalAndBesselProp} and the fact that the multivariate Bessel functions are symmetric in 
their arguments. It is clear that $\rho_1$ preserves convex combinations of measures, moreover, it is injective since we have seen that the law of an  infinite consistent family of interlacing arrays is uniquely determined by 
the joint law of its diagonal entries. 
Moreover, de Finetti's theorem, see \cite{AldousExchangeability}, provides a natural bijection $\rho_2$ from  the exchangeable probability measures on the sequences of real numbers to the probability measures on the space of probability measures on $\mathbb{R}$, 
this map also preserving convex combinations, and sending the law of i.i.d. random variables with law $\mu$ to the Dirac mass at the probability measure $\mu$. 
Since $\rho_2 \circ \rho_1$ is injective and preserves convex combinations, it sends non-extremal measures to non-extremal measures. In order to prove the  extremality of $\mathsf{M}^{\theta}_{\omega}$,
it is then enough to prove the extremality of $\rho_2 \circ \rho_1(\mathsf{M}^{\theta}_{\omega})$, i.e. the image by $\rho_2$ of the joint law of the diagonal entries under $\mathsf{M}^{\theta}_{\omega}$. 
Now, we have shown that the diagonal  entries are i.i.d., and then $\rho_2 \circ \rho_1(\mathsf{M}^{\theta}_{\omega})$ is the Dirac mass at some probability measure, i.e. it is extremal. 

Finally, for an extremal measure $\mathsf{M}_{\omega}^{\theta}$ we record here a formula for $\mathsf{M}_{\omega,K,sym}^{\theta}$, its symmetrized projection on the $K^{th}$ row of the interlacing array. This is characterized through its Dunkl transform which is given explicitly by, see display (\ref{ConvergenceOfSymmetrizedDunkl}):
\begin{align}\label{ExtremalDunklExplicit}
\mathfrak{D}_K^{\theta}\left[\mathsf{M}_{\omega,K,sym}^{\theta}\right](y_1,\dots,y_K)=\prod_{j=1}^{K}\mathfrak{F}_{\omega,\theta}(y_j).
\end{align}
\subsection{Proof of convergence of the Bessel functions}
In this subsection, we prove Proposition \ref{EquivalentMultidimensionalConvergenceBesselProposition}, which by the discussion just above, completes the proof 
of Proposition \ref{sufficiency}. We start by showing a property of uniform boundedness on compact sets:
\begin{prop} \label{UniformBoundednessMultidimensionalLemma}
Let $K\ge 1$ and suppose that $\{a^{(N)}\}_{N\ge 1}$ is an sequence following the O-V conditions,  with limit point $\omega \in \Omega$. Then, 
$\mathfrak{B}_{a^{(N)}}^N$ is uniformly bounded on any compact set of $\mathsf{S}^K_{\omega,\theta}$, independently of $N \geq K$. 
\end{prop}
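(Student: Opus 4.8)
The goal is to bound $\mathfrak{B}_{a^{(N)}}^N(y_1,\dots,y_K;\theta)$ uniformly in $N$ on compacts of $\mathsf{S}^K_{\omega,\theta}$. The plan is to use the probabilistic representation of Proposition \ref{OrbitalAndBesselProp}, which expresses $\mathfrak{B}_{a^{(N)}}^N(y_1,\dots,y_K;\theta)$ as $\mathbb{E}[\exp(\sum_{k=1}^K y_k d_k^{(N)})]$, where $(d_k^{(N)})_{1\le k\le N}$ are the diagonal entries of the orbital beta process with top row $a^{(N)}$ (the remaining arguments being zero). Since the modulus of the expectation is at most the expectation of $\exp(\sum_{k=1}^K \Re(y_k) d_k^{(N)})$, it suffices to control, uniformly in $N$, the Laplace transform $\mathbb{E}[\exp(\sum_{k=1}^K t_k d_k^{(N)})]$ for real $t_k$ with $|t_k|$ in the relevant range. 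By H\"older's inequality, this is bounded by $\prod_{k=1}^K \mathbb{E}[\exp(K t_k d_k^{(N)})]^{1/K}$, so the whole problem reduces to a \emph{one-dimensional} estimate: a uniform-in-$N$ bound on $\mathbb{E}[\exp(s\, d_k^{(N)})]$ for each fixed $k\ge 1$, valid for $|s| < \theta/\max\{\alpha_1^+,\alpha_1^-\}$ (after absorbing the factor $K$, which is why the strip $\mathsf{S}^K_{\omega,\theta}$ has the $1/K$ in it).

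The key structural input for the one-dimensional bound is exchangeability of the diagonal entries, already established in the proof of Proposition \ref{orbitalDirichlet} via the symmetry of the multivariate Bessel function. This means $d_k^{(N)}$ has the same law as $d_1^{(N)}$, the bottom entry, for every $k$. So the entire statement reduces to the uniform-in-$N$ bound on $\mathbb{E}[\exp(s\, d_1^{(N)})]$ — but this is exactly what was proven inside the proof of Proposition \ref{sufficiencyuniformstrip}: there it is shown, using Proposition \ref{orbitalDirichlet} (the bottom entry is $\sum_j \alpha_j^{(N)} a_j^{(N)}$ with Dirichlet weights), the beta–gamma algebra, and the O-V conditions, that $\sup_{N\ge 1}\mathbb{E}[\cosh(y\, d_1^{(N)})] < \infty$ for every $y$ strictly below $\theta/\max\{\alpha_1^+,\alpha_1^-\}$. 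Hence I would simply invoke that computation. Concretely: write $\mathfrak{B}_{a^{(N)}}^N(y;\theta) = \mathbb{E}[\exp(\sum_{k=1}^K y_k d_k^{(N)})]$; take absolute values and use $|\mathbb{E}[Z]| \le \mathbb{E}[|Z|]$; apply H\"older to split into $K$ factors; use exchangeability to replace each $d_k^{(N)}$ by $d_1^{(N)}$; and bound each factor by the uniform bound on $\mathbb{E}[\exp(K\Re(y_k) d_1^{(N)})] \le \sup_N \mathbb{E}[2\cosh(K\Re(y_k) d_1^{(N)})]$, which is finite by the estimate from the proof of Proposition \ref{sufficiencyuniformstrip} since on a compact subset of $\mathsf{S}^K_{\omega,\theta}$ the quantities $K|\Re(y_k)|$ stay bounded away from $\theta/\max\{\alpha_1^+,\alpha_1^-\}$. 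Taking the supremum over the compact set gives a bound independent of $N$.

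The main (minor) obstacle is bookkeeping the constants so that the strip condition $|\Re(y_i)| < \theta/(K\max\{\alpha_1^+,\alpha_1^-\})$ is precisely what lets the H\"older-rescaled argument $Ky_i$ land inside the one-dimensional strip $|\Re(\cdot)| < \theta/\max\{\alpha_1^+,\alpha_1^-\}$ uniformly over a compact set — i.e. choosing, for a given compact $\mathcal{K}\subset \mathsf{S}^K_{\omega,\theta}$, some $y_0$ with $\sup_{y\in\mathcal{K}} K|\Re(y_i)| < y_0 < \theta/\max\{\alpha_1^+,\alpha_1^-\}$ and quoting the bound $\sup_N \mathbb{E}[\exp(\pm y_0 d_1^{(N)})] < \infty$ for this fixed $y_0$. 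There is also the routine point that one must confirm the representation of Proposition \ref{OrbitalAndBesselProp} is being applied with a valid (finite) top row $a^{(N)}\in W^N$ and that setting the last $N-K$ arguments to zero is legitimate, which it is by the very definition of $\mathfrak{B}^N_a$. No new analytic ideas are needed beyond what Proposition \ref{sufficiencyuniformstrip} already supplies; this proposition is essentially the ``multidimensional'' packaging of that one-dimensional estimate.
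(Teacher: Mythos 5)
Your proposal is correct and follows essentially the same route as the paper's own proof: represent $\mathfrak{B}_{a^{(N)}}^N$ probabilistically via Proposition \ref{OrbitalAndBesselProp}, apply H\"older's inequality to split the joint Laplace transform into $K$ one-dimensional factors, use exchangeability of the diagonal entries to reduce each factor to the bottom entry $d_1$, and invoke the uniform bound from Proposition \ref{sufficiencyuniformstrip} after noting that the $1/K$ factor in the definition of $\mathsf{S}^K_{\omega,\theta}$ keeps $K\Re(y_k)$ in a compact subset of $\mathsf{S}_{\omega,\theta}$. No discrepancy to report.
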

\begin{proof} 
By Proposition \ref{OrbitalAndBesselProp}, we have 
$$\mathfrak{B}_{a^{(N)}}^N (y_1, \dots, y_K) = \mathbb{E}\left[\exp \left(\sum_{k=1}^{K} y_k d_k \right)\right]$$
where $d_1, \dots, d_K$ are the $K$ first diagonal entries of an orbital beta process with top row $a^{(N)}$. 
Using H\"older inequality, we get 
$$|\mathfrak{B}_{a^{(N)}}^N (y_1, \dots, y_K)| \leq \prod_{k=1}^K (\mathbb{E} [ e^{K \Re (y_k) d_k} ] )^{1/K}.$$
Since $\mathfrak{B}_{a^{(N)}}^N $ is symmetric with respect to its arguments, $d_1, \dots, d_K$ are exchangeable and a fortiori have the same law, which implies 
$$|\mathfrak{B}_{a^{(N)}}^N (y_1, \dots, y_K)| \leq \prod_{k=1}^K (\mathbb{E} [ e^{K \Re (y_k) d_1} ] )^{1/K}.$$
Now, for $(y_1, \dots, y_K)$ in a given compact set of $\mathsf{S}^K_{\omega,\theta}$, the quantities  $K \Re (y_k) \in \mathsf{S}_{\omega,\theta} $ for $1 \leq k \leq K$ remain in some compact set of 
$\mathsf{S}_{\omega,\theta}$, which gives the  uniform boundedness of $|\mathfrak{B}_{a^{(N)}}^N (y_1, \dots, y_K)|$ by Proposition  \ref{sufficiencyuniformstrip}. 
\end{proof} 
The uniform boundedness of $\mathfrak{B}_{a^{(N)}}^N$ on compact sets of $\mathsf{S}^K_{\omega,\theta}$ shows (by Cauchy's formula) that all the partial derivatives of $\mathfrak{B}_{a^{(N)}}^N$ are also 
uniformly bounded on compact sets, and then the sequence $(\mathfrak{B}_{a^{(N)}}^N)_{N \geq 1}$ is  equicontinuous on compact sets. 
From any subsequence, one can then extract a  sub-subsequence which converges uniformly in compact sets. It is enough to show that the only possible limit of such sub-subsequence 
is $(y_1, \dots, y_K) \mapsto \prod_{j=1}^{K}\mathfrak{F}_{\omega,\theta}(-\i y_j)$.
Now, since the diagonal entries are uniformly bounded for fixed $N$, the functions 
$\mathfrak{B}_{a^{(N)}}^N$ are entire, and then any   limit of a subsequence which is uniform in compact sets of $\mathsf{S}^K_{\omega,\theta}$ should be holomorphic 
in $\mathsf{S}^K_{\omega,\theta}$. By analytic continuation, it is then enough to show that all limits of subsequences should be equal to $\prod_{j=1}^{K}\mathfrak{F}_{\omega,\theta}(-\i y_j)$
for $(y_1, \dots, y_K) $ in some product of segments of the real line which are sufficiently close to zero. 
This is ensured by the following result: 
\begin{prop}\label{PointwiseConvergenceMultidimensionalBesselProp}
Let $K \ge 1$ and suppose $\{a^{(N)}\}_{N\ge 1}$ is an O-V sequence with corresponding point $\omega \in \Omega$. Let $y_1,\dots, y_K$ be real numbers such that:
$$\frac{\theta}{2K \max\{\alpha_1^+, \alpha_1^-\}}>y_1>y_2>\cdots>y_K>0,$$
and for $K \geq 3$, 
$$y_1-y_2>y_3, y_2-y_3>y_4,\dots, y_{K-2}-y_{K-1}>y_K.$$
Then, 
\begin{align}
\lim_{N\to \infty} \mathfrak{B}_{a^{(N)}}^N\left(-y_1,\dots,- y_K\right)\underset{N \to \infty}{\longrightarrow} \prod_{j=1}^{K}\mathfrak{F}_{\omega,\theta}(\i y_j).
\end{align}
\end{prop}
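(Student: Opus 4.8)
The plan is to prove the multi-dimensional pointwise convergence by induction on $K$, peeling off one variable at a time using the branching (Markov) structure of the orbital beta process together with the one-dimensional result already available from Proposition \ref{sufficiencyuniformstrip}. The base case $K=1$ is exactly Proposition \ref{sufficiencyuniformstrip} (with $y$ real and of opposite sign), since $\mathfrak{B}^N_{a^{(N)}}(-y_1) = \mathbb{E}[e^{-y_1 d_1}] = \mathbb{E}[e^{-y_1 a^{(N)}_1 \alpha^{(N)}_1 - \cdots}]$ by Propositions \ref{OrbitalAndBesselProp} and \ref{orbitalDirichlet}, and this converges to $\mathfrak{F}_{\omega,\theta}(\i y_1)$.

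For the inductive step I would exploit the fact that, in an orbital beta process with top row $a^{(N)}\in W^N$, the second row $a^{(N-1)}$ is distributed according to the Dixon-Anderson kernel $\Lambda^{\theta}_{N,N-1}(a^{(N)},\cdot)$, and conditionally on $a^{(N-1)}$ the remaining rows form an orbital beta process with top row $a^{(N-1)}$ and parameter $\theta$. Hence, writing the diagonal entries as $d_1 = a^{(1)}$ and $d_{k+1} = (\text{trace of row }k+1) - (\text{trace of row }k)$, one has
$$\mathfrak{B}^N_{a^{(N)}}(-y_1,\dots,-y_K) = \mathbb{E}\left[\exp\left(-y_1 d_1 - \sum_{k=2}^{K} y_k d_k\right)\right].$$
Using the tower property over the first split, $\sum_{k=1}^K y_k d_k$ can be re-expressed in terms of the diagonal entries of the orbital process with top row $a^{(N-1)}$ plus a term involving $\mathrm{tr}\,a^{(N)} - \mathrm{tr}\,a^{(N-1)}$, which is itself $d_N$ in a shifted indexing; more cleanly, I would use the conditional version of Proposition \ref{OrbitalAndBesselProp} applied to the sub-process with top row $a^{(N-1)}$:
$$\mathfrak{B}^N_{a^{(N)}}(-y_1,\dots,-y_K) = \mathbb{E}^{a^{(N-1)}\sim\Lambda^\theta_{N,N-1}(a^{(N)},\cdot)}\left[e^{-y_K\,(\mathrm{tr}\,a^{(N)}-\mathrm{tr}\,a^{(N-1)})}\,\mathfrak{B}^{N-1}_{a^{(N-1)}}(-y_1+y_K,\dots,-y_{K-1}+y_K)\right],$$
after relabelling the diagonal entries so that the innermost one carries weight $y_K$ and the remaining weights become the successive partial differences. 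This is exactly why the hypotheses $y_1>y_2>\cdots>y_K>0$ and $y_i-y_{i+1}>y_{i+2}$ are imposed: they guarantee that the shifted arguments $(-y_1+y_K,\dots,-y_{K-1}+y_K)$, after the analogous relabelling at the next stage, remain real, ordered, positive, and inside the relevant strip $\mathsf{S}^{K-1}_{\omega,\theta}$, so the inductive hypothesis applies to $\mathfrak{B}^{N-1}_{a^{(N-1)}}$.

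The remaining work is then to pass to the limit in the outer expectation. Here I would combine three ingredients: (i) by the inductive hypothesis, $\mathfrak{B}^{N-1}_{a^{(N-1)}}$ of the shifted arguments converges — but one must be careful, since $a^{(N-1)}$ is itself random and one really needs the convergence to hold for the random row $a^{(N-1)}$, which follows because $a^{(N-1)}/N\to a^{(N)}/N$ in the O-V sense is not automatic; instead I would note that $a^{(N-1)}$ interlaces $a^{(N)}$, so $\alpha^{\pm}_{i,N-1}(a^{(N-1)})$ and $\alpha^{\pm}_{i,N}(a^{(N)})$ differ by $O(1/N)$ and the O-V limit point is preserved, giving convergence of $\mathfrak{B}^{N-1}_{a^{(N-1)}}(\text{shifted args})$ to $\prod_{j=1}^{K-1}\mathfrak{F}_{\omega,\theta}(\i(y_j-y_K))$ in probability; (ii) the factor $e^{-y_K(\mathrm{tr}\,a^{(N)}-\mathrm{tr}\,a^{(N-1)})}$ is the exponential of $-y_K$ times a single diagonal entry, whose Laplace transform converges to $\mathfrak{F}_{\omega,\theta}(\i y_K)$ by Proposition \ref{sufficiencyuniformstrip}; (iii) a uniform integrability / boundedness control coming from Proposition \ref{UniformBoundednessMultidimensionalLemma} (or directly from the Hölder bound $\mathbb{E}[e^{-\sum y_k d_k}]\le \prod (\mathbb{E}[e^{-K y_k d_1}])^{1/K}$ together with Proposition \ref{sufficiencyuniformstrip}), which lets me justify exchanging limit and expectation. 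The main obstacle, and the step I would spend the most care on, is exactly the joint limit in (iii): the random inner Bessel function and the random exponential weight are not independent, so I cannot simply multiply the two limits; instead I would argue that conditionally on $a^{(N-1)}$ the inner factor is deterministic and bounded uniformly on the relevant compact set (by Proposition \ref{UniformBoundednessMultidimensionalLemma} applied at level $N-1$), reduce to showing $\mathbb{E}[e^{-y_K(\mathrm{tr}\,a^{(N)}-\mathrm{tr}\,a^{(N-1)})}\,\psi_N]\to \mathfrak{F}_{\omega,\theta}(\i y_K)\cdot\prod_{j=1}^{K-1}\mathfrak{F}_{\omega,\theta}(\i(y_j-y_K))$ where $\psi_N\to \prod_{j=1}^{K-1}\mathfrak{F}_{\omega,\theta}(\i(y_j-y_K))$ in probability and is bounded, and then invoke dominated convergence together with the asymptotic independence of the "bulk" mass governing $\mathrm{tr}\,a^{(N)}-\mathrm{tr}\,a^{(N-1)}$ from the finitely many largest/smallest $\alpha^{\pm}_i$. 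Finally, telescoping the one-step shift $K-1$ times yields $\prod_{j=1}^{K}\mathfrak{F}_{\omega,\theta}(\i y_j)$, which is the claim.
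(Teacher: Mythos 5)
Your base case agrees with the paper, and you correctly identify that two ingredients would be needed for an inductive step: a one-step decomposition and a way to pass to the limit in the resulting expectation. However, your route is genuinely different from the paper's, and it contains a gap that would require substantial new work to fill.

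The paper does not condition on $a^{(N-1)}$ at all. Instead, its inductive step uses Theorem \ref{ContourIntegralMultiDimensions}, a product formula of Cuenca expressing $\mathfrak{B}_a^N(-y_1,\dots,-y_m)\,\mathfrak{B}_a^N(-y)$ as an explicit contour integral over a compact region, with the integrand again built from $\mathfrak{B}_a^N$ at shifted arguments $(-(y_1+z_1),\dots,-(y_m+z_m),-y+(z_1+\cdots+z_m))$. The two auxiliary lemmas \ref{AuxiliaryLemma1} and \ref{AuxiliaryLemma2} then perform a Laplace-type localisation of this integral at $\mathbf{z}=\mathbf{0}$, exploiting the uniform boundedness from Proposition \ref{UniformBoundednessMultidimensionalLemma}. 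This is an entirely deterministic, analytic argument acting on the fixed data $a^{(N)}$; no randomness in $a^{(N-1)}$ ever enters. This is why the interlacing hypotheses $y_{i}-y_{i+1}>y_{i+2}$ appear in the statement: they are the hypotheses of Theorem \ref{ContourIntegralMultiDimensions}, not something forced by your rebracketing of the diagonal entries.

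Concerning your own argument: first, the identity you write down does not come out of the decomposition you describe. By exchangeability of the diagonal entries together with the Markov property, conditioning on $a^{(N-1)}$ gives
\begin{align*}
\mathfrak{B}^N_{a^{(N)}}(-y_1,\dots,-y_K) = \mathbb{E}\!\left[e^{-y_K(\mathrm{tr}\,a^{(N)}-\mathrm{tr}\,a^{(N-1)})}\,\mathfrak{B}^{N-1}_{a^{(N-1)}}(-y_1,\dots,-y_{K-1})\right],
\end{align*}
without the shift by $y_K$ in the inner arguments. The more serious problem is the claim that interlacing $a^{(N-1)}\prec a^{(N)}$ forces the quantities $\alpha^{\pm}_{i,N-1}(a^{(N-1)})$ to be within $O(1/N)$ of $\alpha^{\pm}_{i,N}(a^{(N)})$, so that the O-V limit point is automatically preserved. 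This is false: interlacing only gives $a^{(N)}_{i+1}\le a^{(N-1)}_i\le a^{(N)}_i$, which traps $a^{(N-1)}_i/(N-1)$ in the interval $[\alpha^+_{i+1,N}N/(N-1),\;\alpha^+_{i,N}N/(N-1)]$. When $\alpha^+_i\ne\alpha^+_{i+1}$ this interval does not shrink, so there is no deterministic reason for $a^{(N-1)}_i/(N-1)$ to converge to $\alpha^+_i$. The concentration of $a^{(N-1)}$ near the O-V limit of $a^{(N)}$ is a probabilistic statement about the Dixon-Anderson kernel that would itself have to be proved (it is essentially the content of Proposition \ref{ExtremalityConvergenceOfOrbital} combined with Theorem \ref{ConsistentAlmostSureConvergence}, which is established downstream of the very proposition you are trying to prove). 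Your final appeal to ``asymptotic independence of the bulk mass from the finitely many largest/smallest $\alpha^{\pm}_i$'' names a phenomenon rather than supplying a lemma; as written it cannot justify the exchange of limit and expectation, because the inner Bessel factor and the exponential factor are both arbitrary bounded functions of the same random row $a^{(N-1)}$. The paper's contour-integral route exists precisely to avoid this circularity: it never needs to know anything about the distribution of the intermediate rows.
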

The main tool we use for its proof is the following result, proven in Section 5 of \cite{CuencaOrbital}, that gives an integral expression for a product of two Bessel functions. This is obtained by performing a limit transition of some analogous formulae for the Jack and Macdonald polynomials \cite{CuencaJack}, \cite{CuencaMacdonald}.

\begin{thm}\label{ContourIntegralMultiDimensions}
Let $a \in W^N$ and $m,N \in \mathbb{N}$ such that $1\le m \le N-1$. Consider any real numbers:
\begin{align*}
y_1>y_2>\cdots>y_m>y>0,\\
\min_{i=1,2, \cdots,m-1}(y_i-y_{i+1})>y.
\end{align*}
We write $\mathbf{y}=\left(y_1,\dots,y_m,y\right)$ and $\mathbf{z}=(z_1,\dots,z_m)$. Then, we have:
\begin{align}
& \mathfrak{B}_{a}^N\left(-y_1,\dots,-y_m;\theta\right)\mathfrak{B}^N_{a}\left(-y;\theta\right)=\frac{\Gamma\left(N\theta\right)}{\Gamma\left(\left(N-m\right)\theta\right)\Gamma\left(\theta\right)^{m}}\frac{\prod_{1 \le i<j \le m}^{}\left(y_i-y_j\right)^{1-2\theta}}{y^{m\theta}\left(y_1\cdots y_m\right)^{\theta}} \label{MultidimensionalIntegral}  \\ &
\times \int_{}^{}\cdots \int G^{\theta}_{\mathbf{y}}(\mathbf{z}) F_{\mathbf{y}}(\mathbf{z})^{\theta(N-m)-1} \mathfrak{B}^N_{a}\left(-(y_1+z_1),\dots,-(y_m+z_m),-y+(z_1+\cdots+z_m);\theta\right) \prod_{i=1}^{m}z^{\theta-1}_idz_i 
\nonumber
\end{align}
where the domain of integration $\mathcal{K}_y$ is the compact subset of $\mathbb{R}^m$ defined by the inequalities:
\begin{align*}
z_1,\dots,z_m\ge 0,\\
y\ge z_1+\cdots +z_m,
\end{align*}
and the functions $G^{\theta}_{\mathbf{y}}(\mathbf{z}), F_{\mathbf{y}}(\mathbf{z})$ are given by (with $y_{m+1}=y-\left(z_1+\cdots+z_m\right)$ and $z_{m+1}=0$):
\begin{align*}
G^{\theta}_{\mathbf{y}}(\mathbf{z})&=\prod_{1\le i<j \le m}^{}\left(y_i-y_j+z_i\right)^{\theta-1} \prod_{1 \le i < j \le m+1}^{}\left(y_i-y_j+z_i-z_j\right)\left(y_i-y_j-z_j\right)^{\theta-1},\\
F_{\mathbf{y}}(\mathbf{z})&=\left(1-\left(z_1+\cdots +z_m\right)/y\right) \prod_{i=1}^{m} \left(1+z_i/y_i\right).
\end{align*}
\end{thm}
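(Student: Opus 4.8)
This identity is established in Section~5 of \cite{CuencaOrbital}; the plan is to obtain it by degenerating an exact algebraic identity for Macdonald polynomials, passing through the Jack level. \textbf{Step 1: a Macdonald-level identity.} I would work with Macdonald polynomials $P_\lambda(\cdot\,;q,t)$ in $N$ variables, under the parameter identification $q=t^{\theta}$ (so that the Jack limit $q,t\to 1$ produces parameter $\theta$). The starting point, which is the content of \cite{CuencaMacdonald} with its Jack specialization in \cite{CuencaJack}, is a Pieri-type multidimensional $q$-integral identity expressing the product of two normalized Macdonald polynomials --- one evaluated at the specialization $(x_1,\dots,x_m,1^{N-m})$, the other at $(x,1^{N-1})$, both normalized by $P_\lambda(1^{N};q,t)$ --- as a $q$-integral of a single normalized Macdonald polynomial at the ``merged'' specialization $\bigl(x_1\xi_1,\dots,x_m\xi_m,\,x(\xi_1\cdots\xi_m)^{-1},1^{N-m-1}\bigr)$, against an explicit weight assembled from $q$-Gamma and $q$-Pochhammer factors. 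This identity is pure algebra: it follows from the branching rule and the Pieri rule for Macdonald polynomials combined with a $q$-Cauchy identity, equivalently from the orthogonality of Macdonald polynomials against the Macdonald density. Its precise domain of validity is a region for $(x_1,\dots,x_m,x)$ cut out by a ``gap condition'', and it is exactly this region that degenerates to the hypotheses $y_1>\dots>y_m>y>0$ and $\min_i(y_i-y_{i+1})>y$ of the statement.

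\textbf{Step 2: the Jack--Bessel scaling limit.} I would set $t=e^{-1/r}$, $q=t^{\theta}=e^{-\theta/r}$, take the partition $\lambda=\lfloor r\mathsf{a}\rfloor$ (a suitable integral approximation of $r$ times the top row $a\in W^{N}$), and put $x_i=e^{-y_i/r}$, $x=e^{-y/r}$, $\xi_i=e^{-z_i/r}$, letting $r\to\infty$. Under this scaling the normalized Macdonald polynomials converge to the multivariate Bessel functions, e.g. $P_\lambda(e^{-y_1/r},\dots,e^{-y_m/r},1^{N-m};q,t)/P_\lambda(1^{N};q,t)\to\mathfrak{B}^{N}_{a}(-y_1,\dots,-y_m;\theta)$, and similarly for the other two specializations. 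Simultaneously the $q$-multidimensional integral becomes a Riemann-sum approximation of the ordinary integral over the compact set $\mathcal{K}_y$; the $q$-Gamma/$q$-Pochhammer weight converges to $G^{\theta}_{\mathbf{y}}(\mathbf{z})\,F_{\mathbf{y}}(\mathbf{z})^{\theta(N-m)-1}\prod_{i=1}^{m}z_i^{\theta-1}$; and the ratio of normalizing $q$-Gamma factors converges to $\frac{\Gamma(N\theta)}{\Gamma((N-m)\theta)\Gamma(\theta)^{m}}\cdot\frac{\prod_{i<j}(y_i-y_j)^{1-2\theta}}{y^{m\theta}(y_1\cdots y_m)^{\theta}}$. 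Matching these pieces yields \eqref{MultidimensionalIntegral}.

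\textbf{Step 3: justifying the interchange of limit and integral.} This is where the real work lies. One needs: (a) locally uniform convergence of the normalized Macdonald polynomials to the Bessel functions on a complex neighbourhood of the relevant real segments --- this follows from the combinatorial (Gelfand--Tsetlin) expansion of Macdonald polynomials together with positivity of the branching coefficients, which gives uniform bounds on compact sets, combined with an equicontinuity/Montel argument of the kind used in the proof of Proposition~\ref{UniformBoundednessMultidimensionalLemma}; (b) a $q$-uniform integrable dominating function for the weights on $\mathcal{K}_y$ --- here one uses that on this compact domain the $q$-Pochhammer factors stay uniformly comparable to their $q=1$ limits, and that the exponents $\theta-1$ and $\theta(N-m)-1$ are $>-1$ so the limiting weight is integrable; (c) convergence of the prefactor, which is elementary $q$-Gamma asymptotics. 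Dominated convergence then closes the argument.

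The main obstacle is the combination of Step~1 with part~(b) of Step~3: one must have the Macdonald identity available in exactly the form and on exactly the domain that degenerates to \eqref{MultidimensionalIntegral}, and one must control the $q$-deformed weights uniformly as $q\to1$; the remaining asymptotics are routine. An alternative route that avoids Macdonald polynomials altogether would be to prove \eqref{MultidimensionalIntegral} directly by induction on $m$, using the representation of $\mathfrak{B}^{N}_{a}$ as an iterated Dixon--Anderson integral (Proposition~\ref{OrbitalAndBesselProp} together with the explicit density \eqref{DixonAndersonExplicit}) and checking the base case $m=1$ as a one-dimensional Euler-type integral identity for Dirichlet averages; however, tracking how the weights $G^{\theta}_{\mathbf{y}}$ and $F_{\mathbf{y}}$ transform when a single variable is appended makes this bookkeeping substantially heavier than the degeneration argument, so I would pursue the Macdonald route first.
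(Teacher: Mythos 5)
The paper does not prove Theorem \ref{ContourIntegralMultiDimensions} itself: it quotes it from Section 5 of \cite{CuencaOrbital}, where it is obtained by exactly the degeneration you describe, namely a Pieri-type integral identity at the Macdonald level, specialized to Jack, followed by the Bessel scaling limit with the attendant uniform-convergence and dominated-convergence justifications. Your outline therefore follows essentially the same route as the paper's source (modulo the standard convention $t=q^{\theta}$ rather than $q=t^{\theta}$ in the Jack degeneration), so no further comparison is needed.
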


\begin{proof}[Proof of Proposition \ref{PointwiseConvergenceMultidimensionalBesselProp}]The case $K=1$ is already covered by Proposition  \ref{sufficiencyuniformstrip}, because of Proposition \ref{OrbitalAndBesselProp}. We will deduce the general case by induction on $K$: we assume the result for $K = m \geq 1$ and we deduce it for $K = m+1$. 

We consider real numbers $y_1,\dots,y_m,y$ so that:
$$\frac{\theta}{2(m+1) \max\{\alpha_1^+, \alpha_1^-\}}>y_1>y_2>\cdots>y_m>y>0,$$
and 
$$y_1-y_2>y_3, y_2-y_3>y_4,\dots, y_{m-2}-y_{m-1}>y_m, y_{m-1}-y_m>y$$
if $m \geq 2$. 
We then apply the formula in Theorem \ref{ContourIntegralMultiDimensions}.
From Stirling's formula, we have the following asymptotic for the pre-factor:
\begin{align*}
\frac{\Gamma(\theta N)}{\Gamma((N-m)\theta)}=(N\theta)^{m\theta}(1+O(N^{-1}))
\end{align*}
for fixed $m$ and $\theta$. 
Now, using the induction hypothesis on the left hand side of (\ref{MultidimensionalIntegral}) and Lemma \ref{AuxiliaryLemma1} and Lemma \ref{AuxiliaryLemma2} below on the right hand side, the conclusion of the proposition immediately follows. It thus suffices to prove these two auxiliary results.
\end{proof}

\begin{lem}\label{AuxiliaryLemma1}
Denote by $\mathfrak{S}_{\theta,\mathbf{y},a^{(N)} }(\mathbf{z})$ the integrand in (\ref{MultidimensionalIntegral}) for $a = a^{(N)}$, and let $\frac{1}{2}<u<1$. Then, for fixed 
$u$, $\theta$ and $\mathbf{y}=\left(y_1,\cdots,y_m,y\right)$, 
\begin{align*}
N^{m \theta} \, \bigg| \int \cdots \int_{\mathcal{K}_y \backslash [0,N^{-u}]^m}  \mathfrak{S}_{\theta,\mathbf{y},a^{(N)} } (\mathbf{z})d\mathbf{z}  \bigg|  \underset{N \rightarrow \infty}{\longrightarrow} 0. 
\end{align*}
\end{lem}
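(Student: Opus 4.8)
The plan is to show that the contribution to the integral coming from the region where at least one $z_i$ exceeds $N^{-u}$ is negligible after multiplication by $N^{m\theta}$. The key observation is that in this region the factor $F_{\mathbf{y}}(\mathbf{z})^{\theta(N-m)-1}$ is exponentially small in $N$, and this decay beats all the polynomially-growing or bounded factors. First I would record that on the compact set $\mathcal{K}_y$ the quantity $F_{\mathbf{y}}(\mathbf{z})$ satisfies $0\le F_{\mathbf{y}}(\mathbf{z})\le 1$; indeed $1-(z_1+\cdots+z_m)/y\le 1$ while each $1+z_i/y_i\ge 1$, but more precisely one checks that $F_{\mathbf{y}}(\mathbf{z})$ attains its maximum value $1$ only at $\mathbf{z}=0$ and is strictly bounded away from $1$ on $\mathcal{K}_y\setminus[0,\varepsilon]^m$ for any $\varepsilon>0$. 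Expanding the logarithm, $\log F_{\mathbf{y}}(\mathbf{z}) = -\,\tfrac1y\sum_i z_i + \sum_i \tfrac{z_i}{y_i} + O(|\mathbf z|^2) \le -c(y_1,\dots,y_m,y)\,|\mathbf z| + O(|\mathbf z|^2)$ for an explicit constant $c>0$ coming from the hypotheses $y_i>y$, uniformly for $\mathbf z$ small; so on the region $|\mathbf z|\ge N^{-u}$ we get $F_{\mathbf{y}}(\mathbf{z})\le e^{-c' N^{-u}}$ for large $N$, hence $F_{\mathbf{y}}(\mathbf{z})^{\theta(N-m)-1}\le e^{-c'' N^{1-u}}$.

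Next I would bound the remaining factors of the integrand $\mathfrak{S}_{\theta,\mathbf y,a^{(N)}}(\mathbf z)$ crudely. The function $G^{\theta}_{\mathbf y}(\mathbf z)$ is a product of powers of differences $y_i-y_j+z_i$, $y_i-y_j+z_i-z_j$, $y_i-y_j-z_j$ with $\mathbf z$ ranging over the compact $\mathcal{K}_y$, so $|G^{\theta}_{\mathbf y}(\mathbf z)|$ is bounded by a constant depending only on $\theta$ and $\mathbf y$ (one should check the differences do not vanish on $\mathcal{K}_y$ — this is exactly what the interlacing-type inequalities $y_1-y_2>y_3$ etc. together with $y_i>y$ guarantee, so that $G^\theta_{\mathbf y}$ stays away from a singularity, or at worst is integrable when $\theta<1$). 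The factor $\prod_i z_i^{\theta-1}$ is integrable on $\mathcal{K}_y$ with a bound depending only on $\theta$ and $y$. Finally the Bessel factor $\mathfrak{B}^N_{a^{(N)}}(-(y_1+z_1),\dots,-(y_m+z_m),-y+(z_1+\cdots+z_m);\theta)$ must be controlled uniformly in $N$ and $\mathbf z\in\mathcal{K}_y$: for this I would invoke Proposition \ref{UniformBoundednessMultidimensionalLemma}, after checking that the shifted arguments $(-(y_1+z_1),\dots,-(y_m+z_m),-y+(z_1+\cdots+z_m))$, which sum (in absolute value of real parts) to at most $y_1+\cdots+y_m+y$, stay in a fixed compact subset of $\mathsf S^{m+1}_{\omega,\theta}$ — this is where the sharper bound $\tfrac{\theta}{2(m+1)\max\{\alpha_1^+,\alpha_1^-\}}$ on the $y_i$'s is used, giving the safety factor of $2$ and room to absorb the $z_i$'s (each of which is at most $y\le y_m$). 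Together these give $|\mathfrak{S}_{\theta,\mathbf y,a^{(N)}}(\mathbf z)|\le C(\theta,\mathbf y)\,\prod_i z_i^{\theta-1}\,e^{-c'' N^{1-u}}$ on $\mathcal{K}_y\setminus[0,N^{-u}]^m$.

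Putting this together, the integral over $\mathcal{K}_y\setminus[0,N^{-u}]^m$ is bounded in absolute value by $C(\theta,\mathbf y)\,e^{-c'' N^{1-u}}\int_{\mathcal K_y}\prod_i z_i^{\theta-1}\,d\mathbf z = C'(\theta,\mathbf y)\,e^{-c'' N^{1-u}}$, and since $u<1$ this is $o(N^{-m\theta})$; multiplying by $N^{m\theta}$ still tends to zero. I would write the argument so that the constants are visibly independent of $N$.

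The main obstacle, and the step deserving the most care, is the uniform-in-$N$ control of the Bessel factor with shifted (and mutually coupled) arguments: one must verify that for $\mathbf z\in\mathcal K_y$ the point $(-(y_1+z_1),\dots,-(y_m+z_m),-y+(z_1+\cdots+z_m),0,\dots,0)\in\mathbb C^N$ lies in a compact subset of $\mathsf S^{m+1}_{\omega,\theta}$ that does not depend on $N$, so that Proposition \ref{UniformBoundednessMultidimensionalLemma} applies with a bound uniform in $N\ge m+1$; the factor-of-two in the hypothesis on the $y_j$ is precisely calibrated for this. A secondary technical point is confirming that $G^\theta_{\mathbf y}(\mathbf z)$ is genuinely integrable (bounded when $\theta\ge 1$, and at worst of the form $\prod(\text{linear})^{\theta-1}$ with non-vanishing linear factors when $\theta<1$) on $\mathcal K_y$ under the stated inequalities on the $y_j$'s; once that is in hand the exponential decay of $F_{\mathbf y}(\mathbf z)^{\theta(N-m)-1}$ does all the remaining work.
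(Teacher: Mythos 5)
Your argument is correct and follows the paper's overall structure: bound $G^\theta_{\mathbf y}$ by a constant and the Bessel factor uniformly in $N$ via Proposition \ref{UniformBoundednessMultidimensionalLemma}, reduce to the weight $F_{\mathbf y}(\mathbf z)^{\theta(N-m)-1}\prod_i z_i^{\theta-1}$, and show the $F$-power decays like $e^{-cN^{1-u}}$ off $[0,N^{-u}]^m$, which beats $N^{-m\theta}$ since $u<1$. The one place you deviate is in how that decay is established: you Taylor-expand $\log F_{\mathbf y}$ near the origin to get a linear term $\sum_i z_i(y_i^{-1}-y^{-1})\le -c|\mathbf z|$ and then invoke compactness for $|\mathbf z|$ bounded below, whereas the paper observes that $F_{\mathbf y}$ is monotone decreasing in each $z_j$ on $\mathcal K_y$ and reads off the exact pointwise bound $F_{\mathbf y}(\mathbf z)\le F_{\mathbf y}(N^{-u}\mathbf e_i)=(1-N^{-u}y^{-1})(1+N^{-u}y_i^{-1})$ whenever $z_i>N^{-u}$, without an error term to control. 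The paper's route is a little cleaner; yours works but you should spell out the two-case split (inner shell $N^{-u}\le|\mathbf z|\le\varepsilon_0$ where the quadratic $O(|\mathbf z|^2)$ error is absorbed because $|\mathbf z|\le\varepsilon_0$, versus outer region $|\mathbf z|\ge\varepsilon_0$ where compactness gives $F_{\mathbf y}\le 1-\delta$), since as written the step ``so on the region $|\mathbf z|\ge N^{-u}$ we get $F_{\mathbf y}(\mathbf z)\le e^{-c'N^{-u}}$'' jumps over exactly that patching. Everything else, including the observation that the interlacing-type inequalities keep the linear factors of $G^\theta_{\mathbf y}$ away from zero and that the shifted Bessel arguments stay in a fixed compact of $\mathsf S^{m+1}_{\omega,\theta}$ thanks to the safety factor of $2$, matches the paper.
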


\begin{lem}\label{AuxiliaryLemma2}
Denote by $\mathfrak{S}_{\theta,\mathbf{y},a^{(N)}}(\mathbf{z})$ the integrand in (\ref{MultidimensionalIntegral}) for $a = a^{(N)}$, and let $\frac{1}{2}<u<1$. Then, for fixed 
$u$, $\theta$ and $\mathbf{y}$, 

\begin{align*}
 \int_0^{N^{-u}} \cdots \int_0^{N^{-u}} \mathfrak{S}_{\theta,\mathbf{y},a^{(N)}}(\mathbf{z})d\mathbf{z} =\frac{ \left(N\theta\right)^{-m\theta} \Gamma(\theta)^my^{m\theta}\prod_{i=}^{m}y_i^{\theta}}{\prod_{1\le i <j \le m}^{}(y_i-y_j)^{1-2\theta}}
  \left(\mathfrak{B}_{a^{(N)}}^N\left(-\mathbf{y};\theta\right)+O (N^{-u} + N^{1- 2 u})\right).
\end{align*}
\end{lem}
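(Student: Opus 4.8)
The plan is to restrict the integral to the box $[0,N^{-u}]^m$, which for $N$ large is contained in the simplex $\mathcal{K}_y$ (since $mN^{-u}<y$ eventually), and to replace each of the three non-trivial factors of the integrand
\[
\mathfrak{S}_{\theta,\mathbf{y},a^{(N)}}(\mathbf{z})=G^{\theta}_{\mathbf{y}}(\mathbf{z})\,F_{\mathbf{y}}(\mathbf{z})^{\theta(N-m)-1}\,\mathfrak{B}^N_{a^{(N)}}\bigl(-(y_1+z_1),\dots,-(y_m+z_m),-y+\textstyle\sum_i z_i;\theta\bigr)\prod_{i=1}^m z_i^{\theta-1}
\]
by an approximation valid on this box, then integrate and keep track of the errors. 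The error term $O(N^{-u}+N^{1-2u})$ is forced by the size $N^{-u}$ of the box combined with the factor $N$ in the exponent of $F_{\mathbf{y}}$.

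First I would record the two elementary expansions. Since $G^{\theta}_{\mathbf{y}}$ is a polynomial, on the box $G^{\theta}_{\mathbf{y}}(\mathbf{z})=G^{\theta}_{\mathbf{y}}(\mathbf{0})+O(|\mathbf{z}|)$, and a direct computation using $y_{m+1}=y$, $z_{m+1}=0$ gives
\[
G^{\theta}_{\mathbf{y}}(\mathbf{0})=\prod_{1\le i<j\le m}(y_i-y_j)^{2\theta-1}\prod_{i=1}^m(y_i-y)^{\theta}.
\]
Writing $\lambda_i:=\tfrac{1}{y_i}-\tfrac{1}{y}<0$, from $\log F_{\mathbf{y}}(\mathbf{z})=\sum_{i=1}^m\lambda_i z_i+O(|\mathbf{z}|^2)$ one gets, uniformly on the box,
\[
F_{\mathbf{y}}(\mathbf{z})^{\theta(N-m)-1}=\exp\Bigl(\theta N\sum_{i=1}^m\lambda_i z_i\Bigr)\bigl(1+O(N|\mathbf{z}|^2+|\mathbf{z}|)\bigr)=\exp\Bigl(\theta N\sum_{i=1}^m\lambda_i z_i\Bigr)\bigl(1+O(N^{1-2u}+N^{-u})\bigr),
\]
where the estimate on the error in the exponent is legitimate because $N|\mathbf{z}|^2\le m^2N^{1-2u}\to 0$ (here we use $u>\tfrac12$) and $|\mathbf{z}|\to 0$.

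The heart of the proof is the Bessel factor: I claim that uniformly over $\mathbf{z}$ in the box and over $N$ large,
\[
\bigl|\mathfrak{B}^N_{a^{(N)}}\bigl(-(y_1+z_1),\dots,-(y_m+z_m),-y+\textstyle\sum_i z_i;\theta\bigr)-\mathfrak{B}^N_{a^{(N)}}(-\mathbf{y};\theta)\bigr|\le C\sum_{i=1}^m z_i=O(N^{-u}).
\]
By Proposition \ref{OrbitalAndBesselProp} both Bessel values are expectations over the diagonal entries $(d_k)_{1\le k\le m+1}$ of an orbital beta process with top row $a^{(N)}$, so the difference equals $\mathbb{E}\bigl[e^{-\sum_{k\le m}y_kd_k-yd_{m+1}}(e^{w}-1)\bigr]$ with $w=-\sum_{k\le m}z_kd_k+(\sum_i z_i)d_{m+1}$. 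Then $|e^w-1|\le|w|e^{|w|}$ and $|w|\le 2(\sum_i z_i)D$ with $D:=\max_{1\le k\le m+1}|d_k|\le\|a^{(N)}\|_{\infty}$; here the O-V conditions only give $\|a^{(N)}\|_{\infty}=O(N)$, so $|w|$ is \emph{not} bounded, and the point is to absorb $e^{|w|}$ into $e^{-\sum y_kd_k-yd_{m+1}}$. Fixing $\delta>0$ small and using $De^{2(\sum z_i)D}\le\tfrac1\delta e^{2\delta\sum_{k\le m+1}|d_k|}$ for $N$ large, the difference is bounded by $\tfrac{C\sum z_i}{\delta}\,\mathbb{E}\bigl[e^{\sum_{k\le m}(y_k+2\delta)|d_k|+(y+2\delta)|d_{m+1}|}\bigr]$; by exchangeability of the $d_k$ and Hölder's inequality over the $m+1$ factors, together with $e^{c|d_1|}\le e^{cd_1}+e^{-cd_1}$, this is controlled by a product of terms $\mathfrak{B}^N_{a^{(N)}}(\pm(m+1)(y_k+2\delta),0^{N-1};\theta)$. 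Since $y_k,y<\frac{\theta}{2(m+1)\max\{\alpha_1^+,\alpha_1^-\}}$, for $\delta$ small we have $(m+1)(y_k+2\delta)<\frac{\theta}{\max\{\alpha_1^+,\alpha_1^-\}}$, and Proposition \ref{sufficiencyuniformstrip} (via Proposition \ref{OrbitalAndBesselProp}) bounds these uniformly in $N$. This is precisely where the factor $2(m+1)$ in the hypotheses is used, and it is the main technical obstacle; the same argument, now with the small shifts $z_j$ included, gives the uniform boundedness of the Bessel factor itself on the box.

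Finally I would substitute the three approximations into $\int_{[0,N^{-u}]^m}\mathfrak{S}_{\theta,\mathbf{y},a^{(N)}}(\mathbf{z})\,d\mathbf{z}$. The leading term factorizes as $\mathfrak{B}^N_{a^{(N)}}(-\mathbf{y};\theta)\,G^{\theta}_{\mathbf{y}}(\mathbf{0})\prod_{i=1}^m\int_0^{N^{-u}}e^{\theta N\lambda_i z_i}z_i^{\theta-1}\,dz_i$, and the substitution $t_i=-\theta N\lambda_i z_i$ together with $N^{1-u}\to\infty$ yields $\int_0^{N^{-u}}e^{\theta N\lambda_i z_i}z_i^{\theta-1}\,dz_i=(-\theta N\lambda_i)^{-\theta}\bigl(\Gamma(\theta)+O(e^{-cN^{1-u}})\bigr)$. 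Since
\[
\prod_{i=1}^m(-\theta N\lambda_i)^{-\theta}=(N\theta)^{-m\theta}\prod_{i=1}^m\Bigl(\tfrac{yy_i}{y_i-y}\Bigr)^{\theta}=(N\theta)^{-m\theta}\,y^{m\theta}\prod_{i=1}^m y_i^{\theta}\Big/\prod_{i=1}^m(y_i-y)^{\theta},
\]
the factor $\prod_i(y_i-y)^{\theta}$ cancels against the same factor of $G^{\theta}_{\mathbf{y}}(\mathbf{0})$, leaving exactly $\frac{(N\theta)^{-m\theta}\Gamma(\theta)^m y^{m\theta}\prod_{i=1}^m y_i^{\theta}}{\prod_{1\le i<j\le m}(y_i-y_j)^{1-2\theta}}\,\mathfrak{B}^N_{a^{(N)}}(-\mathbf{y};\theta)$. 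For the remainder, each error — the $O(|\mathbf{z}|)$ from $G^{\theta}_{\mathbf{y}}$, the $O(N^{1-2u}+N^{-u})$ multiplicative factor from $F_{\mathbf{y}}^{\theta(N-m)-1}$, and the $O(\sum z_i)=O(N^{-u})$ from the Bessel factor — when integrated against a majorant of the remaining factors (using the uniform boundedness of the Bessel term and $|F_{\mathbf{y}}(\mathbf{z})|^{\theta(N-m)-1}\le C\,e^{\theta N\sum\lambda_i z_i}$) produces a contribution of order $(N\theta)^{-m\theta}$ times $O(N^{-u})$, resp. $O(N^{1-2u}+N^{-u})$. Collecting these gives the stated identity with error $O(N^{-u}+N^{1-2u})$.
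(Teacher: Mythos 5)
Your proof is correct and follows the same overall decomposition as the paper's: Taylor-expand each of the three nontrivial factors $G^{\theta}_{\mathbf{y}}$, $F_{\mathbf{y}}^{\theta(N-m)-1}$ and the Bessel factor on the box $[0,N^{-u}]^m$, keep the leading terms, and integrate the Gamma-type factor exactly. The computations of $G^{\theta}_{\mathbf{y}}(\mathbf{0})$, of the log-expansion of $F_{\mathbf{y}}$ with $\lambda_i=y_i^{-1}-y^{-1}$, and of the resulting product of one-dimensional Gamma integrals agree with the paper's (up to a harmless sign typo in the paper's intermediate display for $\log F_{\mathbf{y}}$). The one place where your argument genuinely diverges is the increment estimate for the Bessel factor. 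The paper argues abstractly: for $N$ large the shifted argument stays in a compact subset of $\mathsf{S}^{m+1}_{\omega,\theta}$, so Proposition \ref{UniformBoundednessMultidimensionalLemma} gives uniform boundedness of $\mathfrak{B}^N_{a^{(N)}}$ there, and Cauchy's integral formula then yields uniform bounds on the partial derivatives, hence the $O(N^{-u})$ increment. You instead return to the probabilistic representation of Proposition~\ref{OrbitalAndBesselProp}, pay a factor $e^{|w|}$ which you absorb into the Laplace weight with a fixed small $\delta$, and then close with exchangeability and H\"older over $m+1$ factors against the one-dimensional input of Proposition~\ref{sufficiencyuniformstrip}. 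This is precisely the H\"older/exchangeability mechanism the paper uses to \emph{prove} Proposition~\ref{UniformBoundednessMultidimensionalLemma}, run one derivative deeper; so you have effectively in-lined that proposition instead of invoking it plus Cauchy. Both routes draw on the same inputs and use the $2(m+1)$ room in the hypotheses in the same way (your $(m+1)(y_k+2\delta)<\theta/\max\{\alpha_1^+,\alpha_1^-\}$ corresponds to the paper's compact-in-$\mathsf{S}^{m+1}_{\omega,\theta}$ requirement); yours is more elementary and makes the role of the $1/(2(m+1))$ margin explicit, at the cost of a slightly longer estimate. If you flesh this out, do state the bound $|d_k|\le\|a^{(N)}\|_{\infty}$ you use for motivation: it follows from the observation in the proof of Proposition~\ref{orbitalDirichlet} that $d_{k}$ is a Dirichlet-weighted convex combination of the entries of the $k$-th row, which by interlacing lie between the extremes of $a^{(N)}$.
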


\begin{proof}[Proof of Lemma \ref{AuxiliaryLemma1}]
We clearly have, by continuity of $G$ and the fact that $\mathcal{K}_y$ is compact:
\begin{align*}
\sup_{\mathbf{z} \in \mathcal{K}_y } |G^{\theta}_{\mathbf{y}}(\mathbf{z})|<\infty.
\end{align*}
Moreover, from Proposition \ref{UniformBoundednessMultidimensionalLemma}, we have, since $\left(-(y_1+z_1),\dots,-(y-(z_1+\cdots+z_m))\right)$ remains in some compact subset of $\mathsf{S}^{m+1}_{\omega,\theta}$
when $\mathbf{z} \in \mathcal{K}_y$, 
\begin{align*}
\sup_{N\ge m+1}\sup_{\mathbf{z} \in \mathcal{K}_y} |\mathfrak{B}_{a^{(N)}}^N\left(\left(-(y_1+z_1),\dots,-(y-(z_1+\cdots+z_m))\right);\theta\right)|<\infty.
\end{align*}
Thus, it suffices to show:
\begin{align*}
 \int \cdots \int_{\mathcal{K}_y \backslash [0,N^{-u}]^m}\big|F_{\mathbf{y}}(\mathbf{z}) \big|^{\theta(N-m)-1}\prod_{i=1}^{m}z_i^{\theta-1}dz_i=o(N^{-m\theta}).
\end{align*}
when $N$ goes to infinity. 
We can drop the absolute values since $F_{\mathbf{y}}(\mathbf{z})$ is positive in the range of integration. Moreover, it is easy to show that $F_{\mathbf{y}}(\mathbf{z})$ is monotone decreasing with respect to $z_1,\dots,z_m$. In particular, for any $\mathbf{z}\in\mathcal{K}_y \backslash [0,N^{-u}]^m$, there exists some $1\le i \le m$ such that $z_i>N^{-u}$ and if we let $\mathbf{d}_i$ be the corresponding unit vector in the $i^{th}$ coordinate, then  we have:
\begin{align*}
F_{\mathbf{y}}(\mathbf{z})\le F_{\mathbf{y}}(\mathbf{d}_i)=\left(1-N^{-u}y^{-1}\right)\left(1+N^{-u}y_i^{-1}\right).
\end{align*}
Thus, it suffices to show:
\begin{align*}
 \sum_{i=1}^{m}\left[\left(1-N^{-u}y^{-1}\right) \left(1+N^{-u}y_i^{-1}\right) \right]^{\theta(N-m)-1}\int \cdots \int_{\mathcal{K}_y \cap\{\mathbf{z} \in \mathbb{R}_+^m|z_i\ge N^{-u} \}}\prod_{j=1}^{m}z_j^{\theta-1}dz_j=o(N^{-m\theta}).
\end{align*}
This is clearly true since each integral in the sum is bounded by
\begin{align*}
\int_{0}^{y}\cdots \int_{0}^{y}\prod_{i=1}^{m}z_i^{\theta-1}dz_i=\left(\frac{y^{\theta}}{\theta}\right)^m
\end{align*}
and for $N$ large enough in order to have $\theta(N-m)-1 \geq 0$, 
\begin{align*}
\left[\left(1-N^{-u}y^{-1}\right)\left(1+N^{-u}y_i^{-1}\right) \right]^{\theta(N-m)-1}
\leq \exp \left( N^{-u} (y_i^{-1} - y^{-1}) (\theta(N-m)-1) \right)
\end{align*}
which has rapid decay when $N \rightarrow \infty$ since $y_i^{-1} - y^{-1} < 0$ and $u<1$. 
\end{proof}

\begin{proof}[Proof of Lemma \ref{AuxiliaryLemma2}]
We  have, for $0\le z_i \le N^{-u}$: 
\begin{align*}
\mathfrak{B}_{a^{(N)}}^N\left(-(y_1+z_1),\dots,-(y_m+z_m),-(y-(z_1+\cdots+z_m));\theta\right)=\mathfrak{B}_{a^{(N)}}^N\left(-y_1,\dots,-y_m,-y;\theta\right)\\
+O\left(\sum_{i=1}^{m}|z_i|\sup_{t \in [0,1]}\bigg| \frac{\partial}{\partial z_i}\mathfrak{B}_{a^{(N)}}^N\left(-(y_1+tz_1),\dots,-(y_m+tz_m),-(y-t(z_1+\cdots+z_m));\theta\right) \bigg|\right).
\end{align*}
For fixed $u$, $\theta$ and $\mathbf{y}$, we know that the argument in $\mathfrak{B}_{a^{(N)}}^N$ in the last error term remains in some compact subset of $\mathsf{S}^{m+1}_{\omega,\theta}$ if $N$ is large enough, 
and then the partial derivatives of $\mathfrak{B}_{a^{(N)}}^N$ remain uniformly bounded by Proposition \ref{UniformBoundednessMultidimensionalLemma} and Cauchy integral formula. 
We then get, for $N$ large enough, 
\begin{align*}
& \mathfrak{B}_{a^{(N)}}^N\left(-(y_1+z_1),\dots,-(y_m+z_m),-(y-(z_1+\cdots+z_m));\theta\right) \\ &  =\mathfrak{B}_{a^{(N)}}^N\left(-y_1,\dots,-y_m,-y;\theta\right)+O(N^{-u}).
\end{align*}
Moreover, for $(z_1,\dots,z_m) \in [0,N^{-u}]^m$ we have:
\begin{align*}
G^{\theta}_{\mathbf{y}}(\mathbf{z})=\prod_{i=1}^{m}\left(y_i-y\right)^{\theta}\prod_{1\le i <j \le m}^{}(y_i-y_j)^{2\theta-1}(1+O(N^{-u})), \ F_{\mathbf{y}}(\mathbf{z})^{-\theta m-1}=1+O(N^{-u}).
\end{align*}
Thus, it suffices to prove that:
\begin{align*}
\int_{0}^{N^{-u}}\cdots \int_{0}^{N^{-u}}F_{\mathbf{y}}(\mathbf{z})^{N\theta}\prod_{i=1}^{m}z_i^{\theta-1}dz_i=\frac{\Gamma(\theta)^my^{m\theta}(y_1\dots y_m)^{\theta}}{(N\theta)^{m\theta}\prod_{i=1}^{m}(y_i-y)^{\theta}}\left(1+O(N^{1-2u})\right).
\end{align*}
Towards this end, we use the Taylor expansion around the origin $\mathbf{0}=(0,\dots,0)$ in order to get, for $(z_1,\dots,z_m) \in [0,N^{-u}]^m$, 
$$\log (F_{\mathbf{y}}(\mathbf{z})) = - \sum_{i=1}^m z_i  (y_i^{-1} - y^{-1}) + O(N^{-2u}),$$
and then, since $u > 1/2$, 
\begin{align*}
F_{\mathbf{y}}(\mathbf{z})^{N\theta}=\prod_{i=1}^{m}\exp \left(-\frac{N\theta(y_i-y)}{yy_i}z_i\right) \times \left(1+O\left(N^{1-2u}\right)\right).
\end{align*} 
The claim then follows by observing that:
\begin{align*}
\int_{0}^{N^{-u}}\exp \left(-\frac{N\theta(y_i-y)}{yy_i}z_i\right) & z_i^{\theta-1}dz_i
= \frac{(yy_i)^{\theta}}{(N\theta)^{\theta}(y_i-y)^{\theta}} \int_{0}^{\frac{N\theta(y_i-y)}{yy_i} N^{-u}}\exp \left(- t_i \right) t_i^{\theta-1}dt_i
\\ & = \frac{(yy_i)^{\theta}}{(N\theta)^{\theta}(y_i-y)^{\theta}} \left( \Gamma(\theta) - \int_{\frac{N\theta(y_i-y)}{yy_i} N^{-u}}^{\infty} \exp \left(- t_i \right) t_i^{\theta-1}dt_i \right) \end{align*}
the last integral decaying rapidly when $N \rightarrow \infty$, since $u < 1$. 
\end{proof}

\section{The disintegration of consistent distributions}  \label{sectiondisintegration}
The goal of this section is to prove Propositions \ref{disintegration1} and \ref{disintegration2}. 
For this purpose, we will use abstract and technical results, which are for example developed by Winkler in \cite{Winkler}. 

\begin{proof} [Proof of Proposition \ref{disintegration1}] Let us fix $\theta \in (0, \infty)$. We consider a category $\mathcal{B}$ defined as follows: its objects are given by standard Borel spaces, and  its morphisms are given by the Markov kernels. 
Among the objects, we have  the spaces $(W^N)_{N \geq 1}$, endowed with their Borel $\sigma$-algebra, and for any $N \geq K \geq 1$, we have a morphism $\Lambda_{N,K}^{\theta}$: it is clear that these 
morphisms are compatible, i.e. $\Lambda_{K,L}^{\theta} \circ \Lambda_{N,K}^{\theta} =  \Lambda_{N,L}^{\theta}$ for $N \geq K \geq L\geq 1$. 
We then define the notion of limit object as follows:  
\begin{defn}
 A limit object of $(W^N)_{N \geq 1}$ and $(\Lambda_{N,K}^{\theta})_{N\ge K \geq  1}$ in $\mathcal{B}$  consists of an object $W^{\infty}=\underset{\leftarrow}{\lim} \, W^N$ of $\mathcal{B}$, 
 and Markov kernels (i.e. morphisms) $\Lambda_{\infty,N}^{\theta}: W^{\infty} \to W^N$ such that for $N \geq K \geq 1$, $\Lambda_{N,K}^{\theta} \circ \Lambda_{\infty,N}^{\theta}=\Lambda_{\infty,K}^{\theta}$. Moreover, if an object $\tilde{W}^{\infty}$ of $\mathcal{B}$ and Markov kernels $\tilde{\Lambda}_{\infty,N}^{\theta}:\tilde{W}^{\infty}\to W^N$ satisfy the same condition, then there exists a unique Markov kernel $\Lambda^{\tilde{W}^{\infty}}_{W^{\infty}}:\tilde{W}^{\infty}\to W^{\infty}$ such that  $\tilde{\Lambda}_{\infty,N}^{\theta}=\Lambda^{\theta}_{\infty,N} \circ \Lambda^{\tilde{W}^{\infty}}_{W^{\infty}}$. 
\end{defn}

By a general result of Winkler (see Theorem 4.1.3 in \cite{Winkler}), the limit exists and it is unique up to a Borel isomorphism. 
The space $W^{\infty}$ can be obtained by the following construction. Observe that the Markov kernels $(\Lambda^{\theta}_{N+1,N})_{N \geq 1}$ induce the chain of affine mappings: 
\begin{align*}
\mathcal{M}_p\left(W^{1}\right) \leftarrow\mathcal{M}_p\left(W^{2}\right) \leftarrow \cdots \leftarrow \mathcal{M}_p\left(W^{N}\right)\leftarrow \mathcal{M}_p\left(W^{N+1}\right)\leftarrow \cdots,
\end{align*} 
where $\mathcal{M}_p\left(W^N\right)$ is the simplex of probability measures on $W^N$ equipped with the weak topology. Consider the space $\mathcal{W}=\prod_{N=1}^{\infty}\mathcal{M}_p\left(W^N\right)$ with the product topology and define the inverse system of simplices (not to be confused with the limit in the measurable category $\mathcal{B}$):
\begin{align*}
\underset{\leftarrow}{\lim} \, \mathcal{M}_p\left(W^N\right)=\big\{ \{\mu_N\}_{N\ge 1} \in \mathcal{W}: \mu_{N+1}\Lambda_{N+1,N}^{\theta}=\mu_N \ , \forall N \ge 1\big\},
\end{align*}
consisting of coherent sequences of probability measures. By Theorem 3.2.3  in \cite{Winkler} (see also Corollary 3.2.5 and step 3 in the proof of Theorem 4.1.3 therein), the convex set $\underset{\leftarrow}{\lim} \, \mathcal{M}_p\left(W^N\right)$ is actually a Polish simplex. Moreover, by steps 3 and 4 in the proof of Theorem 4.1.3 on page 103 of \cite{Winkler} (see also second paragraph on page 109 of \cite{Winkler}) its extreme points coincide with $W^{\infty}$; this is exactly how $W^{\infty}$ is constructed. Thus, we have:

\begin{prop} Fix $\theta \in (0, \infty)$. Then, we can take, where $\textnormal{Ex}$ denotes the set of extremal points,
\begin{align}
W^{\infty}=\underset{\leftarrow}{\lim}W^N := \textnormal{Ex}\left(\underset{\leftarrow}{\lim}\mathcal{M}_p\left(W^N\right)\right), \label{Winftycoherent}
\end{align}
and for $\mathfrak{w}\in W^{\infty}$ corresponding to $(\mu_N)_{N\ge 1} \in \textnormal{Ex}\left(\underset{\leftarrow}{\lim}\mathcal{M}_p\left(W^N\right)\right)$, the Markov kernels are given by $\Lambda^{\theta}_{\infty,N}\left(\mathfrak{w},\cdot\right)=\mu_N\left(\cdot\right)$.
\end{prop}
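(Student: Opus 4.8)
The plan is to verify that the concrete projective system $\bigl((W^N)_{N\ge 1},(\Lambda^{\theta}_{N,K})_{N\ge K\ge 1}\bigr)$ satisfies the hypotheses of Winkler's abstract projective-limit machinery, and then to read off both assertions of the proposition from his construction of the limit object.

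First I would check that we are genuinely working inside the category $\mathcal{B}$. Each Weyl chamber $W^N$ is a closed subset of $\mathbb{R}^N$, hence a standard Borel space. Each $\Lambda^{\theta}_{N+1,N}$ is a Markov kernel from $W^{N+1}$ to $W^N$: by the definition of the Dixon--Anderson conditional distribution, $\Lambda^{\theta}_{N+1,N}(b,\cdot)$ is a Borel probability measure on $W^N$ depending continuously on $b$ (continuity of the roots of a polynomial with respect to its coefficients), hence Borel-measurably; and a composition of Markov kernels between standard Borel spaces is again such a kernel, so the same holds for every $\Lambda^{\theta}_{N,K}$. The compatibility $\Lambda^{\theta}_{K,L}\circ\Lambda^{\theta}_{N,K}=\Lambda^{\theta}_{N,L}$ for $N\ge K\ge L\ge 1$ is immediate from the definition of $\Lambda^{\theta}_{N,K}$ as an iterated composition. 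Thus $\bigl((W^N)_{N\ge 1},(\Lambda^{\theta}_{N,K})_{N\ge K\ge 1}\bigr)$ is a projective system of exactly the type treated in Chapter 4 of \cite{Winkler}.

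Next I would invoke Theorem 4.1.3 of \cite{Winkler}, which guarantees that a limit object $\bigl(W^{\infty},(\Lambda^{\theta}_{\infty,N})_{N\ge 1}\bigr)$ exists and is unique up to a Borel isomorphism; the content of the present proposition is merely the identification of one concrete model. Following the proof of that theorem, one passes to the induced chain of affine maps $\mathcal{M}_p(W^{N+1})\to\mathcal{M}_p(W^N)$, $\mu_{N+1}\mapsto\mu_{N+1}\Lambda^{\theta}_{N+1,N}$, and forms the inverse limit $\underset{\leftarrow}{\lim}\,\mathcal{M}_p(W^N)$ of coherent sequences inside $\mathcal{W}$. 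By Theorem 3.2.3 of \cite{Winkler}, together with Corollary 3.2.5 and step 3 of the proof of Theorem 4.1.3, this inverse limit is a nonempty Polish simplex; by steps 3 and 4 on page 103 of \cite{Winkler} (see also the second paragraph on page 109) its extreme boundary $\textnormal{Ex}\bigl(\underset{\leftarrow}{\lim}\,\mathcal{M}_p(W^N)\bigr)$, equipped with the evaluation kernels, realises the limit object. Hence we may take $W^{\infty}=\textnormal{Ex}\bigl(\underset{\leftarrow}{\lim}\,\mathcal{M}_p(W^N)\bigr)$.

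Finally I would spell out the kernels. For $\mathfrak{w}\in W^{\infty}$ corresponding to a coherent sequence $(\mu_N)_{N\ge 1}$, set $\Lambda^{\theta}_{\infty,N}(\mathfrak{w},\cdot)=\mu_N(\cdot)$; measurability in $\mathfrak{w}$ holds because the coordinate projection $\underset{\leftarrow}{\lim}\,\mathcal{M}_p(W^N)\to\mathcal{M}_p(W^N)$ is continuous and evaluation of a probability measure on a fixed Borel set is Borel. The relation $\Lambda^{\theta}_{N,K}\circ\Lambda^{\theta}_{\infty,N}=\Lambda^{\theta}_{\infty,K}$ is precisely the coherence $\mu_N\Lambda^{\theta}_{N,K}=\mu_K$ defining membership in $\underset{\leftarrow}{\lim}\,\mathcal{M}_p(W^N)$, and the universal property --- existence and uniqueness of a factoring Markov kernel from any competitor $\bigl(\tilde W^{\infty},(\tilde\Lambda^{\theta}_{\infty,N})_{N\ge1}\bigr)$ --- is exactly the assertion of Theorem 4.1.3. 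The only genuinely delicate point is the claim that $\underset{\leftarrow}{\lim}\,\mathcal{M}_p(W^N)$ is a Polish simplex whose extreme boundary is Borel-isomorphic to the abstract limit object; but this is the content of the cited results of Winkler, so the proof reduces to checking that their hypotheses --- standard Borel state spaces and Markov kernels forming a compatible projective system --- are satisfied, which is the first step above.
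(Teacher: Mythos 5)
Your argument follows the paper's own route exactly: both reduce the proposition to the cited results in Winkler's monograph (Theorem 4.1.3, Theorem 3.2.3, Corollary 3.2.5, and the indicated steps of the proof of Theorem 4.1.3), after observing that $(W^N)_{N\ge 1}$ and $(\Lambda^\theta_{N,K})$ form a compatible projective system of standard Borel spaces and Markov kernels. Your only addition is to spell out more explicitly why the $W^N$ are standard Borel, why the $\Lambda^\theta_{N+1,N}$ are measurable kernels, and why the evaluation kernels $\mathfrak{w}\mapsto\mu_N$ are Borel, which the paper takes as given.
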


By the fact that the space $\underset{\leftarrow}{\lim}\mathcal{M}_p\left(W^N\right)$ is a Polish simplex, we deduce (see Corollary 3.2.5 in \cite{Winkler}): 
\begin{prop}\label{DisintegrationProp}
Fix $\theta \in (0, \infty)$. For any coherent sequence $\{\mathfrak{M}_N\}_{N\ge 1} \in \underset{\leftarrow}{\lim}\mathcal{M}_p\left(W^N\right)$ there exists a unique probability measure $\nu^{\mathfrak{M}}$ on $W^{\infty}$ such that
for all $N \geq 1$ and all Borel sets $E \subset W^{N}$, 
\begin{align}
\mathfrak{M}_N(E)= \int_{W^{\infty}}^{} d\nu^{\mathfrak{M}}(\mathfrak{w})\Lambda_{\infty, N}^{\theta}\left(\mathfrak{w},E \right), \ \forall N\ge 1, 
\end{align}
the map $\mathfrak{w} \mapsto \Lambda_{\infty, N}^{\theta}\left(\mathfrak{w},E\right)$ being measurable. 
\end{prop}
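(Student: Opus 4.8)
The plan is to deduce this statement directly from the abstract simplex theory of Winkler, which has already been set up in the preceding paragraphs. The key structural fact is that $\underset{\leftarrow}{\lim}\mathcal{M}_p\left(W^N\right)$, the inverse limit of the simplices of probability measures under the maps induced by $(\Lambda^{\theta}_{N+1,N})_{N \geq 1}$, is a Polish simplex (this is invoked from Theorem 3.2.3 in \cite{Winkler}), and that its extreme points are precisely the space $W^{\infty}$ constructed in \eqref{Winftycoherent}, with $\Lambda^{\theta}_{\infty,N}(\mathfrak{w},\cdot)$ given by the $N$-th component $\mu_N$ of the coherent sequence $\mathfrak{w}$.

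First I would observe that any coherent sequence $\{\mathfrak{M}_N\}_{N\ge 1}$ is, by definition, a point of the Polish simplex $\underset{\leftarrow}{\lim}\mathcal{M}_p\left(W^N\right)$. Choquet-type theory for Polish simplices — precisely Corollary 3.2.5 in \cite{Winkler}, which is the form in which Winkler packages the existence and uniqueness of the barycentric (disintegration) measure — then yields a unique probability measure $\nu^{\mathfrak{M}}$ supported on the set of extreme points, i.e. on $W^{\infty}$, such that $\{\mathfrak{M}_N\}_{N\ge 1}$ is the barycenter of $\nu^{\mathfrak{M}}$. Barycenter here is taken coordinatewise in the product space $\mathcal{W}=\prod_{N=1}^{\infty}\mathcal{M}_p\left(W^N\right)$, so that for each fixed $N$ and each bounded measurable $f$ on $W^N$,
\begin{align*}
\int_{W^N} f \, d\mathfrak{M}_N = \int_{W^{\infty}} \left( \int_{W^N} f \, d\mu_N^{\mathfrak{w}} \right) d\nu^{\mathfrak{M}}(\mathfrak{w}),
\end{align*}
where $\mu_N^{\mathfrak{w}}$ is the $N$-th coordinate of $\mathfrak{w}$, which is exactly $\Lambda^{\theta}_{\infty,N}(\mathfrak{w},\cdot)$. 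Taking $f = \mathbf{1}_E$ for a Borel set $E \subset W^N$ gives the displayed identity of the proposition. The measurability of $\mathfrak{w} \mapsto \Lambda^{\theta}_{\infty,N}(\mathfrak{w},E)$ is built into the definition of a Markov kernel (a morphism in the category $\mathcal{B}$), so there is nothing extra to check there.

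The one point that needs a little care, and which I expect to be the only real obstacle, is the passage from "barycenter in the topological vector space sense (tested against continuous functions)" to "equality of measures (tested against all bounded Borel functions)". On a Polish space the Borel $\sigma$-algebra is generated by the open sets and probability measures are determined by their integrals against bounded continuous functions, so a monotone-class / $\pi$-$\lambda$ argument upgrades the coordinatewise barycentric identity from $C_b(W^N)$ to all Borel sets of $W^N$; this is routine and is exactly the content of how Winkler phrases Corollary 3.2.5. Uniqueness of $\nu^{\mathfrak{M}}$ is automatic from the simplex property: two representing measures on the extreme-point set would give the same barycenter, contradicting the defining uniqueness property of a Choquet simplex. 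This completes the proof.
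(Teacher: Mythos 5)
Your proposal is correct and follows essentially the same route as the paper: the statement is deduced directly from the fact that $\underset{\leftarrow}{\lim}\mathcal{M}_p(W^N)$ is a Polish simplex together with Corollary 3.2.5 of \cite{Winkler}, which supplies the unique barycentric representing measure supported on the extreme-point set $W^{\infty}$. Your extra remarks on upgrading the barycentric identity to all Borel sets and on measurability being built into the Markov-kernel definition are accurate elaborations of points the paper treats as implicit in Winkler's result.
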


As we have seen in the introduction, the coherent sequences of probability measures are in canonical bijection with the distributions of consistent infinite families of interlacing arrays. 
This bijection preserves the convex combinations and the extremality. Moreover, if $\mathsf{M}$ is the image of $\{\mathfrak{M}_N\}_{N\ge 1}$, and the extremal measure $\mathsf{M}_{\mathfrak{w}}$  is the  image of the extremal sequence $(\Lambda_{\infty, N}^{\theta}\left(\mathfrak{w},\cdot \right))_{N \geq 1}$, we get 
\begin{equation} \mathsf{M} [ E]  = \int_{W^{\infty}} \mathsf{M}_{\mathfrak{w}} [E] d \nu^{\mathfrak{M}}(\mathfrak{w}), \label{eqME}
\end{equation}
for all events $E$ which are measurable with respect to the $\sigma$-algebra generated by the $N^{th}$ row, $N$ being any positive integer. 
On the other hand, since $ \mathsf{M}_{\mathfrak{w}}$ is the distribution of a consistent infinite family of interlacing arrays for all $\mathfrak{w} \in W^{\infty}$, it is easy to check that 
$$E \mapsto \int_{W^{\infty}} \mathsf{M}_{\mathfrak{w}} [E] d \nu^{\mathfrak{M}}(\mathfrak{w})$$
also defines a consistent distribution. 
Under this distribution and under $\mathsf{M}$, each of the rows has the same law. By the Markov property,  the joint law of finitely many rows is also the same, and by the monotone class theorem,
 \eqref{eqME} holds for all measurable events $E$ on infinite interlacing arrays, 
which proves Proposition \ref{disintegration1}. 
\end{proof}
\begin{proof}[Proof of Proposition \ref{disintegration2}]
Let us assume that $W^{\infty}$ is constructed as in the proof of Proposition \ref{disintegration1}. 
From Theorem \ref{main1} for any $\theta \in (0,\infty)$, we have a bijection $\psi_1 : \omega \mapsto \mathsf{M}^{\theta}_{\omega}$ from $\Omega$ to 
the set $\mathcal{E}$ of extremal consistent distributions on infinite families of interlacing arrays. On the other hand, 
taking the distribution of the successive rows induces a bi-continuous bijection $\psi_2$ from $\mathcal{E}$ to 
$W^{\infty} = \textnormal{Ex}\left(\underset{\leftarrow}{\lim}\mathcal{M}_p\left(W^N\right)\right)$. 
By construction, $\psi_2$ is the bijection involved in Proposition \ref{disintegration1}. 
If we show that $\psi_1$ is a Borel isomorphism, then by bi-continuity of $\psi_2$,  $\psi_2 \circ \psi_1$ is a Borel isomorphism from $\Omega$ to $W^{\infty}$. 
 We can then replace $W^{\infty}$ by $\Omega$ in Proposition  \ref{disintegration1}. The  bijection from $\Omega$ to $\mathcal{E}$
 involved in this proposition is then obtained by composing $\psi_2 \circ \psi_1 : \Omega \rightarrow W^{\infty}$ and 
 $\psi_2^{-1} : W^{\infty} \rightarrow \mathcal{E}$, which gives $\psi_1 : \omega \mapsto \mathsf{M}^{\theta}_{\omega}$. 
 Proposition  \ref{disintegration2} is then proven if we check that $\psi_1$ is a Borel isomorphism from $\Omega$ to $\mathcal{E}$. In fact, it is enough to check that $\psi_1$ is continuous, because by Theorem 3.2 in \cite{Mackey}, a Borel one to one map from a standard Borel space onto
  a subset of a countably generated Borel space is a Borel isomorphism.
  
  In order to get continuity of $\psi_1$, it is enough to show the continuity in $\omega$ of the law of each row under $\mathsf{M}^{\theta}_{\omega}$, 
  because of the continuity properties of the Markov transitions and the monotone class theorem. Using Proposition \ref{LevyForDunklTransform}, 
  we deduce that it is enough to show the continuity in $\omega$ of the joint law of the diagonal entries, and then, since they are i.i.d., the continuity of the law of any diagonal entry. 
  This law has some exponential moments, so it is determined by its cumulants, which have been computed in the proof of Proposition \ref{sufficiencyuniformstrip}: they are given by $\gamma_1$, $\theta^{-1} \delta$ and 
  $$p \mapsto (p-1)!\theta^{1-p} \left(\sum_i (\alpha_i^+)^p + \sum_i (\alpha_i^-)^p \right)$$
  for $p \geq 3$. To prove the continuity of the law with respect to $\omega$, it is enough to check the continuity of the cumulants. For the two first cumulants, continuity is obvious: note that this is due to the fact that 
  we have used the auxiliary space $\Omega'$, for which $\gamma_2$ is replaced by $\delta$,  in the definition of the topology taken on $\Omega$ (see Definition \ref{Omegadef}). 
    For the cumulants of order larger than or equal to $3$, we deduce the continuity from the fact that for all $r \geq 1$, 
 $$\omega \mapsto   \left(\sum_{i \leq r} (\alpha_i^+)^p + \sum_{i \leq r} (\alpha_i^-)^p \right)$$
 is continuous, and  the bound
 \begin{align*}
 \sum_{i > r} (\alpha_i^+)^p + \sum_{i >  r} (\alpha_i^-)^p & \leq ((\alpha_{r+1}^+)^{p-2} + (\alpha_{r+1}^-)^{p-2}) \left(  \sum_{i} (\alpha_i^+)^2 + \sum_{i} (\alpha_i^-)^2 \right)
\\ &  \leq \delta ((\alpha_{r+1}^+)^{p-2} + (\alpha_{r+1}^-)^{p-2}).
\end{align*}
    %In the construction of the proof of Proposition \ref{disintegration1}, the bijection from $W^{\infty}$ to the set of extremal consistent distributions on infinite families of interlacing arrays
%is obtained 

%\begin{defn}
%Define, for any $N\ge 1$ the Markov kernels $\Lambda^{\theta}_{\infty, N}$ from $\Omega$ to $W^N$ by:
%\begin{align}
%\mathfrak{D}_{N}^{\theta}\left[\Lambda^{\theta}_{\infty, N}(\omega,\cdot)\right](y_1,\cdots,y_N)= \prod_{i=1}^{N}\mathfrak{F}_{\omega,\theta}(y_i) , \ \forall N\ge 1
%\end{align}
%Namely, we just put $\Lambda^{\theta}_{\infty, N}(\omega,\cdot)=\mathsf{M}^{\theta}_{\omega, N}(\cdot)$.
%\end{defn}
%We can now identify the boundary of the projective system $\left(W^N, \Lambda^{\theta}_{N+1,N}\right)_{N\ge 1}$ with $\Omega$:
%\begin{prop}
%The boundary $W^{\infty}$ of $\left(W^N, \Lambda^{\theta}_{N+1,N}\right)_{N\ge 1}$ is isomorphic to $\Omega$.
%\end{prop}
%
%By Proposition \ref{ExtremalityConvergenceOfOrbital} and Theorem \ref{MainTheorem} the map:
%\begin{align*}
%\Lambda^{\theta}_{\infty,\cdot}: \Omega \to \textnormal{Ex}\left(\underset{\leftarrow}{\lim}\mathcal{M}_p\left(W^N\right)\right)
%\end{align*}
%is a bijection.
 \end{proof}

\section{Consistency and convergence of $\beta$-Hua-Pickrell and $\beta$-Bessel point processes} \label{sectionHuaPickrell}

We recall that we use the parameter $\theta=\beta/2$. For each $N\ge 1$, we define the Hua-Pickrell general $\beta$ ensemble to be the probability measure $\mathfrak{M}^{\theta}_{\mathsf{HP},N,s}$ on $W^N$, depending on a parameter $s \in \mathbb{C}$ such that $\Re s >- \frac{1}{2}$:
\begin{align}
\mathfrak{M}^{\theta}_{\mathsf{HP},N,s}(dx)=\frac{1}{Z^{\theta}_{\mathsf{HP},N,s}}\prod_{j=1}^{N}\left(1+\i x_j\right)^{-s-N\theta}\left(1-\i x_j\right)^{-\bar{s}-N\theta}w^{\theta}_N(x)\mathbf{1}_{(x \in W^N)}dx_1\cdots dx_N.
\end{align}
By using Lemma 2.2 of \cite{NeretinTriangles}, one deduces that the family $( \mathfrak{M}^{\theta}_{\mathsf{HP},N,s})_{N \geq 1}$ is consistent. 
This family induces a consistent distribution  $\mathsf{M}^{\theta}_{\mathsf{HP},s}$ on the infinite families of interlacing arrays. 
By Theorem \ref{main2}, we deduce that there exists a probability measure $\nu^{\mathsf{HP},\theta,s}$ on $\Omega$ such that 
$\mathsf{M}^{\theta}_{\mathsf{HP},s} = \mathsf{M}^{\theta}_{\nu^{\mathsf{HP},\theta, s}} $.
Hence, from Theorem \ref{ConsistentAlmostSureConvergence} under $\mathsf{M}^{\theta}_{\mathsf{HP},s}$, the successive rows a.s. satisfy the O-V conditions.
In particular, we get almost sure convergence of the extremal points, divided by $N$, towards some limiting point process. 
For $\theta = 1$ (i.e. $\beta = 2$) and $s=0$, this result gives the almost sure convergence of the renormalized eigenangles of a virtual isometry following the Haar distribution: see \cite{NeretinUnitary} and \cite{BourgadeNajnudelNikeghbali}. 
The almost sure convergence of the renormalized extremal points implies the  convergence in distribution of the corresponding point processes. For general $\theta$ and $s = 0$, after applying the Cayley transform, we deduce the 
convergence of the point process of the renormalized eigenangles of the Circular beta ensemble towards a limiting point process. This result has already been proven by Killip and Stoiciu in \cite{KillipStoiciu}: our method thus gives an alternative proof, which is less explicit in the description of the limiting point process, but which has the advantage of giving a natural coupling such that strong convergence occurs. 
The limiting process has been interpreted as the spectrum of a random operator in a paper by Valk\'o and Vir\'ag \cite{ValkoVirag}. 
For general $\theta$ and $s$, we also deduce  a convergence in law of point processes, for which a proof had already been announced in \cite{ValkoVirag}, the limiting process being  again obtained as the spectrum of an operator. 

Another question concerns the distribution of the parameters $\gamma_1$ and $\gamma_2$  when $\omega$ follows the distribution $\nu^{\mathsf{HP},\theta,s}$.
We conjecture that almost surely, $\gamma_2 = 0$ and 
$$\gamma_1 = \underset{m \rightarrow \infty}{\lim} \left( \sum_{i} \alpha_i^+ \mathbf{1}_{\alpha_i^+ \geq m^{-2}}  -  \sum_{i} \alpha_i^- \mathbf{1}_{\alpha_i^- \geq m^{-2}} 
\right).$$
This result has been proven by Qiu in \cite{Qiu} when $\theta = 1$ (i.e. $\beta =2$), for all $s$ with $\Re s >- 1/2$  in the case of $\gamma_2$, and for  $s \in (-1/2, \infty)$ in the case of $\gamma_1$. Note that in \cite{Qiu}, similar results have been proven for the ergodic decomposition of infinite Hua-Pickrell measures, which can be defined when $\Re s \leq -1/2$: such decomposition of infinite measures has been proven to make sense by Bufetov in \cite{Bufetov}. We do not know if some of the results of the present paper can be extended to the case of infinite $\sigma$-finite consistent measures on the space of  infinite families of interlacing arrays.

Moreover, for each $N\ge 1$, we can also define the inverse Wishart/Laguerre general $\beta$ ensemble as the probability measure $\mathfrak{M}^{\theta}_{\mathsf{IW},N,\tau}$ on $W_+^N = W^N \cap \mathbb{R}_+^N$,  depending on a parameter $\tau > -1$:
\begin{align}
\mathfrak{M}^{\theta}_{\mathsf{IW},N,\tau}(dx)=\frac{1}{Z^{\theta}_{\mathsf{IW},N,\tau}}\prod_{j=1}^{N}x_j^{-\tau-2\theta N}e^{-\frac{2}{x_j}}w^{\theta}_N(x)\mathbf{1}_{(x \in W_+^N)}dx_1\cdots dx_N.
\end{align}
The computation of the integral in 'Variant A' in Section 2.2 of \cite{NeretinTriangles}, after  a change of variables $x \mapsto \frac{1}{x}$, proves that 
the family $(\mathfrak{M}^{\theta}_{\mathsf{IW},N,\tau})_{N \geq 1}$ is consistent. We deduce, from Theorem \ref{ConsistentAlmostSureConvergence}, similar convergence results as in the case of Hua-Pickrell measures, for the renormalized largest eigenvalues. Since with the change of variables $x \mapsto \frac{1}{x}$ the extremal points under $\mathfrak{M}^{\theta}_{\mathsf{IW},N,\tau}$ become the smallest eigenvalues of the general $\beta$-Laguerre ensemble (and the rescaling $\frac{1}{N}\mapsto N$ is exactly the hard edge scaling) we obtain the almost sure convergence towards the $\beta$-Bessel point process, which is described through the generator of a random diffusion, see \cite{RiderRamirez}.

The distribution of the parameters $\gamma_1$ and $\gamma_2$ in the case of the inverse Wishart measures for $\theta=1$ (i.e. $\beta=2$) has recently been determined in \cite{AssiotisInverseWishart}: almost surely $\gamma_2=0$ and (recall that $\alpha^-_j\equiv 0$) $\gamma_1=\sum_{i}^{}\alpha_i^+$. It is natural to expect that this result holds for any $\theta>0$.

\bigskip
\noindent
{\sc School of Mathematics, University of Edinburgh, James Clerk Maxwell Building, Peter Guthrie Tait Rd, Edinburgh EH9 3FD, U.K.}\newline
\href{mailto:theo.assiotis@ed.ac.uk}{\small theo.assiotis@ed.ac.uk}

\bigskip
\noindent
{\sc School of Mathematics, University of Bristol, U.K.}\newline
\href{mailto:T.Assiotis@bristol.ac.uk}{\small Joseph.Najnudel@bristol.ac.uk}

\end{document}